\documentclass[pdflatex,sn-mathphys-num]{sn-jnl}

\usepackage{float}
\usepackage{tikz,pgf}
\usepackage{comment}
\usepackage{epstopdf}
\usepackage{enumerate}
\usepackage{mathrsfs}
\usepackage{anyfontsize}
\usepackage{amsthm}

\usepackage{array}
\usepackage{enumitem}
\usepackage[normalem]{ulem}
\usepackage{bookmark}
\usepackage{lscape}
\usepackage{pdflscape}

\usepackage{todonotes}
\newcommand{\nret}[1]{\sharp({#1})}

\usepackage{graphicx}%
\usepackage{multirow}%
\usepackage{amsmath,amssymb,amsfonts}%
\usepackage{amsthm}
\usepackage[title]{appendix}%
\usepackage{xcolor}%
\usepackage{textcomp}%
\usepackage{manyfoot}%
\usepackage{booktabs}%
\usepackage{algorithm}%
\usepackage{algorithmicx}%
\usepackage{algpseudocode}%
\usepackage{listings}%

\theoremstyle{thmstyleone}%
\newtheorem{theorem}{Theorem}

\theoremstyle{thmstyletwo}%
\newtheorem{remark}{Remark}%
\newtheorem{lemma}{Lemma}
\newtheorem{corollary}{Corollary}
\theoremstyle{thmstylethree}%

\begin{document}

\title{Characterisations of Planar Galled Networks}

\author*[1]{\fnm{Hexuan} \sur{Liu}}\email{lhx@ruri.waseda.jp}

\author*[2]{\fnm{Taoyang} \sur{Wu}}\email{taoyang.wu@uea.ac.uk}

\author[3]{\fnm{Guan-Ru} \sur{Yu}}\email{gryu@math.nsysu.edu.tw}

\affil*[1]{\orgdiv{Department of Pure and Applied Mathematics}, \orgname{Waseda University}, \orgaddress{\city{Tokyo}, \postcode{169-8555}, \country{Japan}}}

\affil[2]{\orgdiv{School of Computing Sciences}, \orgname{University of East Anglia}, \orgaddress{ \city{Norwich}, \postcode{NR4 7TJ}, \country{UK}}}

\affil[3]{\orgdiv{Department of Applied Mathematics}, \orgname{ National Sun Yat-sen University}, \orgaddress{\city{Kaohsiung}, \postcode{804},  \country{Taiwan}}}

\abstract{Rooted phylogenetic networks are widely used to represent the evolution of species that have undergone reticulate processes. However, these networks can be highly non-planar, making them more difficult to visualise and interpret than evolutionary trees. In this paper, we investigate planarity properties of galled networks, an important subclass of phylogenetic networks. We show that all planar galled networks are necessarily upward planar. Furthermore, by leveraging recent results on planar phylogenetic networks, we provide three characterisations for each of the outerplanar and terminal planar galled network classes in terms of forbidden vertex configurations, forbidden directed subgraphs, and forbidden structures in their associated underlying undirected graphs. These results contribute to a deeper understanding of the structural properties of galled networks and may inform future methods for their construction and visualisation.}

\keywords{phylogenetic networks, terminal planar digraph, forbidden subgraph, galled networks}

\maketitle

\begin{center} 
\emph{Dedicated to Andreas Dress, with gratitude for his mentorship and contributions to phylogenetics community.
}
\end{center}

\section{Introduction} \label{sec:intro}

Phylogenetic networks are directed acyclic graphs used to represent reticulate evolutionary histories involving processes such as hybridization, recombination, and lateral gene transfer \cite{Dress2010,HusonBook,Steel,KongEtAl,LinzWicke}. Planarity plays a central role in the visualisation of such networks, as planar drawings allow for clearer biological 
interpretation \cite{MoultonWu2022}. Indeed, several algorithms for constructing and drawing phylogenetic networks are specifically designed to produce planar outputs \cite{DressHuson2004, GambetteHuson2008, SpillnerNguyenMoulton2011}. In particular, galled networks \cite{HusonKlopper2007} form one of the most natural and well-studied subclasses of phylogenetic networks, in which each reticulation event, such as a hybridization or recombination, can be attributed to a single local cycle-like structure in the network.

Planarity is one of the most classical structural properties in graph theory, with deep connections to graph drawing, network visualisation, and combinatorial optimization. For undirected graphs, planarity is well-understood: efficient algorithms exist for planarity testing \cite{HoTa}, and Kuratowski's theorem \cite{Kuratowski} provides a complete structural characterisation via forbidden subdivisions. For directed graphs, however, the 
picture is far more complex. Edge orientations give rise to multiple distinct planarity notions that do not coincide in general, and forbidden-structure characterisations analogous to Kuratowski's theorem remain elusive for several important classes of digraphs, including upward planar digraphs.

For undirected graphs, classical forbidden-subgraph results characterise the relationship between planarity and certain forbidden structures \cite{Sys, Kuratowski}. For directed graphs, several distinct planarity notions have been studied. For example, a digraph is \emph{upward planar} if it admits a planar drawing in which every edge is directed monotonically upward \cite{HuLu, Thomassen1989, GaTa}. For phylogenetic networks,  Moulton and Wu \cite{MoultonWu2022} 
introduced the notion of \emph{terminal planarity} for networks that admit a planar drawing in which all terminals (the root and leaves) lie on the outer face. They established an equivalence between terminal planarity and the upward planarity of an associated supergraph (the so-called completion), showed that planar, upward planar, terminal planar, and outerplanar networks form a strict hierarchy, and gave polynomial-time algorithms for deciding membership in each planarity class. 
More recently, Miyaji et al.~\cite{Miyaji2026} showed that general terminal phylogenetic networks can be characterised by a set of forbidden structures consisting of six families of 0/1 labeled graphs.

\begin{figure}[h]
    \centering
    \includegraphics[scale=0.55]{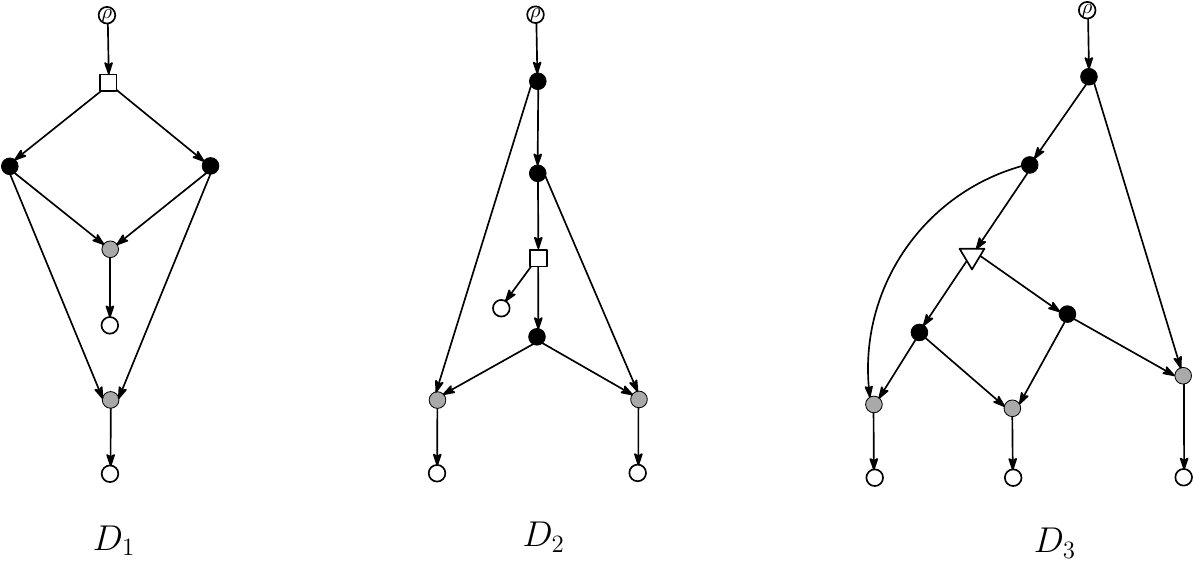}
    \caption{The three forbidden galled networks for outerplanar galled networks. Note that  $D_1$ and $D_2$ are the two forbidden structures for terminal galled networks. The square tree vertices in $D_1$ and $D_2$ are primitive, and the inverted triangular tree vertex in $D_3$ is separable.}
    \label{fig:mainforbidden}
\end{figure}

In this paper, we study rooted binary galled networks. We begin by providing a complete proof for the claim that planarity coincides with upward planarity for galled networks (Theorem~\ref{upward:thm}), which is stated without proof by Tollis and Kakoulis~\cite{TollisKakoulis2020}.
Next, we provide three equivalent characterisations of outerplanarity and terminal planarity in the context of galled networks. First, by exploiting a key structural property of galled networks, we reduce outerplanarity to the absence of a particular type of subdivision of $K_{2,3}$, which we call a normal subdivision (Theorem~\ref{thm:normalk23}). Similarly, terminal planarity is characterised by the absence of a more restricted subclass of such normal subdivisions (Theorem~\ref{thm:normal-Kk23}).
Second, in Theorem~\ref{noter2gall}, we show that a galled network is terminal planar if and only if it contains no primitive tree vertex (an internal tree vertex lying on the intersection path of two galled cycles). Moreover, it is outerplanar if and only if it contains neither a primitive tree vertex nor a separable tree vertex (that is, a tree vertex $v$ contained in exactly three galled cycles, each involving a distinct pair of neighbours of $v$); see Theorem~\ref{thm:outerplanar-forbidden} and Fig.~\ref{fig:mainforbidden}.
Finally, Theorem~\ref{outer forbidden} establishes that a galled network is outerplanar if and only if it contains no directed subdivision of any of the three minimal forbidden galled networks $D_1$, $D_2$, and $D_3$ shown in Fig.~\ref{fig:mainforbidden} as a subgraph. In contrast, terminal planarity is characterised in Theorem~\ref{terminal forbidden} by the exclusion of only $D_1$ and $D_2$, which leverages a recent characterisation of general terminal phylogenetic networks by Miyaji et al.~\cite{Miyaji2026}.

The remainder of the paper is organised as follows. Section~\ref{sec:background} introduces the basic definitions and notation of graph theory and phylogenetic networks. Section~\ref{sec:galled} reviews the definition of galled networks and provides some basic properties for such networks. Section~\ref{sec:upward} establishes the relationship between planarity and upward planarity for galled networks. Sections~\ref{sec:outer} and~\ref{sec:terminal} characterise outerplanar and terminal planar galled networks, respectively. Section~\ref{sec:conclusion} concludes with a brief discussion. For completeness, an appendix is included on the binary case of the recent characterisation of general terminal phylogenetic networks by Miyaji et al.~\cite{Miyaji2026}.

\section{Preliminaries}\label{sec:background}

\subsection{Graph}
We assume that all graphs in this paper are finite and simple, containing neither self-loops nor parallel edges. Let $G$ be an undirected graph with vertex set $V(G)$ and edge set $E(G)$, abbreviated as $V$ and $E$ when $G$ is clear from the context. For a vertex $v \in V(G)$, we denote by $\deg_G(v)$ the number of edges of $G$ incident to $v$. We write $\{u,v\}$ for an undirected edge, that is, an unordered pair of vertices, and $(u,v)$ for a directed edge from $u$ to $v$, that is, an ordered pair. Depending on the context, $\{u,v\}$ may denote either this edge or the two-element set of vertices $u$ and $v$. For a directed graph $G$, its underlying graph, denoted by $\overline{G}$, is the undirected graph obtained from $G$ by replacing each directed edge $(u,v)$ with the undirected edge $\{u,v\}$. A graph $H$ is a \emph{subgraph} of a graph $G$ if $V(H)\subseteq V(G)$ and $E(H)\subseteq E(G)$, with the orientations of edges inherited from $G$ when $G$ is directed. If $V(H)=V(G)$ and $E(H)=E(G)$ hold, we write $H=G$.

A \emph{walk} $W = (v_0, v_1, \dots, v_k)$ in $G$ is a finite sequence of vertices such that $\{v_{i-1}, v_i\} \in E(G)$ for each $1 \le i \le k$. Since all graphs considered here are simple, a vertex sequence uniquely determines the traversed edges. A walk is \emph{closed} if $v_0 = v_k$, and \emph{simple} if no vertex appears more than once, except possibly $v_0 = v_k$. In general, internal vertices $v_1, \dots, v_{k-1}$ of a walk may repeat. 
A \emph{subwalk} of $W$ between $v_i$ and $v_j$, where $0 \leq i<j \leq k$, is the contiguous subsequence $(v_i,v_{i+1},\dots,v_j)$.

A \emph{path} $P=(v_0, v_1, \dots, v_k)$ is a simple walk, and the \emph{subpath} of $P$ between $v_i$ and $v_j$ is denoted as $P[v_i,v_j] = (v_i, \dots, v_j)$. Two paths are \emph{internally disjoint} if they share no internal vertices. Two paths are \emph{vertex-disjoint} if they share no  vertices. A \emph{cycle} $C$ is a closed path, that is, a simple closed walk containing at least three vertices. If $G$ is a directed graph, the terms \emph{walk}, \emph{path}, \emph{cycle}, \emph{subpath}, and \emph{degree} refer to the corresponding objects in the underlying  graph $\overline{G}$; in particular, $\deg_G(v):=\deg_{\overline{G}}(v)$. A \emph{directed walk} (resp.\ \emph{directed path}, \emph{directed cycle}) is a walk (resp.\ path, cycle) $(v_0, v_1, \dots, v_k)$ in $\overline{G}$ such that $(v_{i-1}, v_i) \in E(G)$ for every $1 \le i \le k$.

Furthermore, a \emph{subdivision} of an undirected graph $G$ is a graph $G'$ obtained from $G$ by replacing the edges of $G$ with pairwise internally disjoint paths of non-zero length between the corresponding endpoints. The vertices of $G'$ corresponding to vertices of $G$ are referred to as the \emph{branch vertices} of $G'$.
For each edge $\{u, v\}$ of $G$, the path in $G'$ that replaces $\{u, v\}$ is called the \emph{canonical path} between $u$ and $v$ in $G'$. A \emph{directed subdivision} of a directed graph $G$ is defined analogously, replacing each directed edge $(u,v) \in E(G)$ with a directed path from $u$ to $v$ of non-zero length.

A graph $G$ is \emph{connected} if there is a path between any two vertices of $G$. A vertex $v$ of a graph $G$ is a \emph{cut vertex} if there exist two vertices $x,y \in V(G)\setminus\{v\}$ such that $G$ contains a path between $x$ and $y$, and every path between $x$ and $y$ in $G$ contains $v$. An edge $e$ of $G$ with endpoints $x$ and $y$ is a \emph{cut edge} if every path between $x$ and $y$ in $G$ uses the edge $e$. A graph $G$ is \emph{$k$-connected} if it has more than $k$ vertices and remains connected after the removal of any set of fewer than $k$ vertices. A connected graph is \emph{biconnected} if it has no cut vertex. A \emph{block} of $G$ is a maximal connected subgraph without a cut vertex; thus, every block is a maximal biconnected subgraph, that is, either a maximal $2$-connected subgraph, a cut edge with its two endpoints, or an isolated vertex.

Finally, we introduce the concept of a pivotal basis for a subgraph, which is motivated by Miyaji et al.~\cite{Miyaji2026} and will be used primarily in Section~\ref{sec:terminal}. 
Consider a subgraph $H$ of a graph $G$. A \emph{pivotal ray} of $H$ in $G$ is a path $P$ between a vertex $v\in V(H)$ and a cut vertex $u$ of $G$ such that $V(P) \cap V(H) = \{v\}$ and $u$ is the only cut vertex of $G$ on $P$. 
If such a ray exists, we call $v$ \emph{cut-visible} from $H$ in $G$.  Note that the ray $P$ is allowed to be degenerate, that is, $v$ is itself a cut vertex and $P$ consists of the vertex $v$ only.
A \emph{pivotal basis} of $H$ in $G$ is a set of vertices
$B = \{v_1, \dots, v_k\} \subseteq V(H)$ such that there exist pairwise vertex-disjoint pivotal rays $P_1, \dots, P_k$ in $G$ with
$V(P_i) \cap V(H) = \{v_i\}$ for each $1 \le i \le k$. Note that this implies that no two distinct rays $P_i$ and $P_j$ share a common vertex. See Figure~\ref{fig:cut_visible_illustration} for examples and a counterexample. 
Note that if $H=G$, then a pivotal basis of $H$ consists of cut vertices in $G$.

\begin{figure}[htbp]
    \centering
    \includegraphics[width=0.8\textwidth]{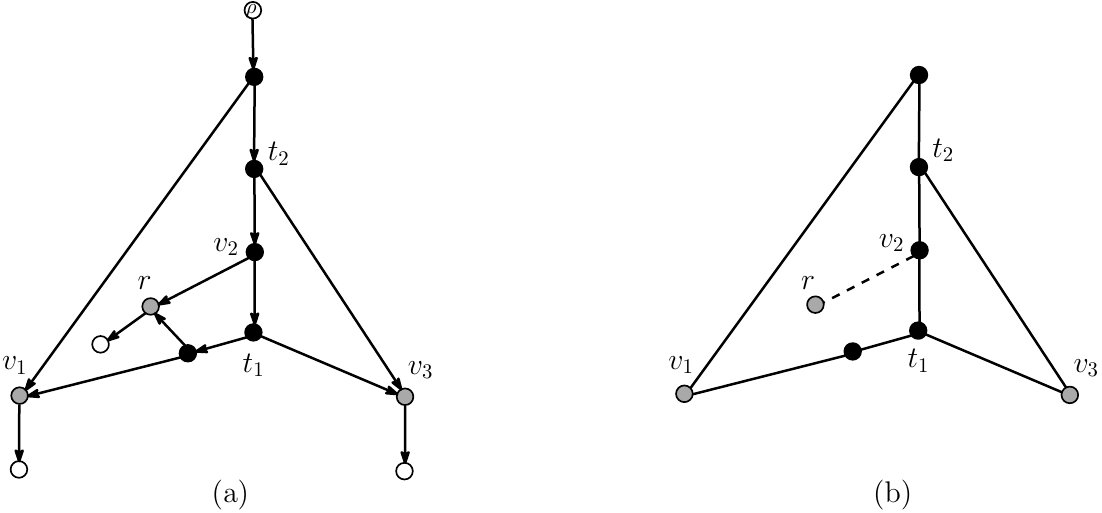}
    \caption{An illustration of pivotal bases. (a) A phylogenetic network $N$. (b) A subdivision $H$ of $K_{2,3}$ in the underlying graph $\overline{N}$ of $N$, shown by solid lines. Note that pivotal bases need not be unique. For example, both $\{v_1,v_2,v_3\}$ and $\{v_1,v_2\}$ are pivotal bases of $H$ in $\overline{N}$. However, $\{t_1,t_2\}$ is not a pivotal basis, since neither $t_1$ nor $t_2$ is cut-visible from $H$ in $\overline{N}$.}

    \label{fig:cut_visible_illustration}
\end{figure}

\subsection{Phylogenetic network}

Given a finite set $X$ of labels, a rooted and binary phylogenetic network (which from now on we will just call a \emph{phylogenetic network}) on $X$ is an acyclic directed graph $N=(V,E)$ with $X \subseteq V$, a unique root $\rho$ with in-degree $0$ and out-degree $1$, and all other non-root vertices belonging to one of the following three types: vertices in $X$, with in-degree $1$ and out-degree $0$, called \emph{leaves}; 
vertices with in-degree $1$ and out-degree $2$, called \emph{tree vertices}; and vertices with in-degree $2$ and out-degree $1$, called \emph{reticulation vertices}.
For a directed subgraph $H$ of $N$, we denote the underlying  graph of $H$ by $\overline{H}$. 
We use $\nret{H}$ to denote the number of reticulation vertices in  $H$, and put  $\nret{\overline{H}}=\nret{H}$.

Let $N$ be a phylogenetic network. If there exists a directed path $P$ in $N$ from $u$ to $v$, then $u$ is an \emph{ancestor} of $v$ in $N$ and $v$ is a \emph{descendant} of $u$ in $N$, denoted as $v<u$, or $v\le u$ if $u=v$ is allowed.
If $P$ is a directed edge, we also say that $u$ is a \emph{parent} of $v$ and $v$ is a \emph{child} of $u$.  If neither vertex is an ancestor of the other, they are said to be \emph{incomparable} $N$, denoted as $u \parallel v$. The qualifier ``in $N$'' is omitted when the network is clear from the context.

A path in $N$ is called a \emph{tree path} if each vertex on this path is a tree vertex of $N$; a \emph{directed tree path} is defined analogously. An edge incident with two tree vertices in $N$ is referred to as a \emph{tree edge}. The following lemma includes some elementary facts that are useful in later sections, its proof is straightforward and hence omitted.

\begin{lemma}\label{lmm:cycle}
Let $N$ be a phylogenetic network. Then every cycle of $N$ contains at least one tree vertex and at least one reticulation vertex. Moreover, for any two tree vertices $u$ and $v$ of $N$, there is at most one path between $u$ and $v$ whose vertices are all tree vertices.
\end{lemma}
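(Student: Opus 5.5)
Both claims follow from the degree and orientation constraints at each vertex type; I will prove them in turn.

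\emph{Every cycle contains a tree vertex and a reticulation vertex.} Let $C=(v_0,v_1,\dots,v_k=v_0)$ be a cycle of $N$, taken in the underlying graph $\overline{N}$. Each $v_i$ has degree $2$ on $C$, so it is incident with two edges of $C$.

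First I place the terminals. The root $\rho$ has degree $1$ and every leaf has degree $1$, so none of them lies on $C$. Hence every vertex of $C$ is a tree vertex or a reticulation vertex.

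Next I classify each $v_i$ by how the two $C$-edges at $v_i$ are oriented in $N$. The vertex $v_i$ is a \emph{sink} of $C$ if both edges point into it, a \emph{source} if both point out of it, and \emph{transitional} otherwise. Since $N$ is acyclic, $C$ is not a directed cycle. So going around $C$ the edge orientations must reverse at least once, which forces $C$ to contain at least one sink and at least one source.

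A sink has in-degree at least $2$ in $N$, so it is not a tree vertex and must be a reticulation vertex. A source has out-degree at least $2$, so it is not a reticulation vertex and must be a tree vertex. This proves the first claim.

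\emph{At most one tree path between two tree vertices.} I argue by contradiction. Suppose $P_1\neq P_2$ are two paths between tree vertices $u$ and $v$ whose vertices are all tree vertices. Walk from $u$ along both paths until they first differ, and let $x$ be the last vertex they share before that point. From $x$, follow $P_1$ until it first meets a vertex $y$ of $P_2$ lying after $x$. The segment of $P_1$ from $x$ to $y$ together with the segment of $P_2$ from $y$ back to $x$ forms a cycle. This is the standard argument that two distinct paths with common endpoints contain a cycle in their union.

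Every vertex of this cycle lies on $P_1$ or $P_2$, so every vertex is a tree vertex. This contradicts the first part, which guarantees a reticulation vertex on every cycle.

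The only step needing care is extracting a genuine simple cycle, with at least three vertices, from two distinct paths. This holds because $N$ is simple, so two distinct $u$–$v$ paths cannot differ only by a parallel edge. The rest follows directly from the degree constraints.
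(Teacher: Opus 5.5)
Your proof is correct. It supplies the routine argument the paper omits as ``straightforward,'' using reasoning the paper itself relies on elsewhere: the sink-on-a-non-directed-cycle argument in the proof of Theorem~\ref{cycleandgalledcycle}, and the fact in Lemma~\ref{lmm:overlapone} that two distinct paths with common endpoints contain a cycle. One point is worth stating more precisely than ``because $N$ is simple'': your extracted cycle has at least three vertices because, by the choice of $x$, the two segments leave $x$ through different neighbours, so they cannot both be the single edge $\{x,y\}$.
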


The next lemma shows an elementary structure of a tree path.

\begin{lemma}\label{lem:treepath-top} 
Let $P$ be a tree path in a phylogenetic network $N$. Then $P$ has a unique vertex $s$ with no incoming edge in $P$.
Moreover, $s$ is an ancestor of every other vertex of $P$, and $P$ is either a directed path in $N$ starting at $s$ or the union of two such directed paths.
\end{lemma}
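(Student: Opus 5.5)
Since every vertex of $P$ is a tree vertex of $N$, each has in-degree exactly $1$ in $N$. The plan is to exploit this together with the acyclicity of $N$, looking at how the edges of $P$ are oriented as we walk along it. Write $P=(v_0,\dots,v_k)$. If $k=0$ there is nothing to show, so assume $k\ge 1$. Every edge $\{v_{i-1},v_i\}$ of $P$ is an edge of $N$, so it is oriented either forward $(v_{i-1},v_i)$ or backward $(v_i,v_{i-1})$.

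\textbf{Step 1: the orientation pattern.} An internal vertex $v_i$ cannot have both of its $P$-edges pointing into it, since it has only one incoming edge in $N$ and the two $P$-edges are distinct edges of $N$. So at most one edge of $P$ enters each vertex. Now scan the sequence of orientations. Suppose a forward edge were followed later by a backward edge. Take the first backward edge after some forward edge; its predecessor is forward. Then the vertex between them receives two incoming $P$-edges, which is a contradiction. Hence the orientation sequence consists of some backward edges followed by some forward edges. In other words, for a unique index $j$,
\[
(v_j,v_{j-1},\dots,v_0)\quad\text{and}\quad(v_j,v_{j+1},\dots,v_k)
\]
are directed paths in $N$, either of which may be trivial.

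\textbf{Step 2: uniqueness of $s$ and the ancestor property.} Set $s=v_j$. By Step 1, $s$ has no incoming edge in $P$. Every other vertex $v_i$ receives exactly one $P$-edge, namely the one from its neighbour on the side nearer $s$. So $s$ is the unique vertex with no incoming edge in $P$. Every other vertex lies on one of the two directed paths starting at $s$, so it is a descendant of $s$. This gives the description of $P$ as a directed path from $s$, or as the union of two such paths meeting only at $s$.

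The only step needing care is the claim in Step 1 that orientations cannot switch from forward to backward. This rests on the in-degree bound, applied to the edges as edges of $N$; since $P$ is a path, its edges are distinct, so the bound applies directly. Acyclicity is not even needed beyond this, although it confirms that the two directed paths are consistent.
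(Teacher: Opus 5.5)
Your proof is correct and rests on the same key fact as the paper's: no vertex of $P$ can receive two $P$-edges, because tree vertices have in-degree $1$ (and indeed acyclicity is not needed). The difference is only bookkeeping. The paper finds $s$ first by counting ($k$ edges entering $k$ distinct vertices among $k+1$) and then orients the edges away from $s$, whereas you first derive the backward-then-forward orientation pattern and read off $s$, which makes explicit the propagation step the paper states tersely.
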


\begin{proof} Suppose that $P$ has $k$ edges, then it has $k+1$ vertices. Each edge of $P$ is directed into exactly one of its two endpoints.
Since all vertices on $P$ are tree vertices, each has at most one incoming edge in $P$.
Thus the $k$ edges of $P$ are incoming edges of $k$ distinct vertices on $P$, so there is a unique vertex $s$ with no incoming edge in $P$.

If $s$ is an endpoint of $P$, then every edge of $P$ is directed away from $s$, so $P$ is a directed path in $N$ starting at $s$. If $s$ is not an endpoint of $P$, the same argument applies to the two subpaths of $P$ between $s$ and the two endpoints of $P$, and hence $P$ is the union of two directed paths in $N$ starting at $s$. In both cases, $s$ is an ancestor of every other vertex of $P$. \end{proof}

We shall refer to the unique vertex $s$ in Lemma~\ref{lem:treepath-top} as the \emph{highest vertex} of $P$.

\subsection{Planar Graph}
Given a graph $G=(V,E)$, a drawing $\Gamma(G)$ of $G$ maps each vertex $v\in V$ to a distinct point $\Gamma(v)\in\mathbb{R}^2$, called a \emph{node}; we write $\Gamma$ for $\Gamma(G)$ when $G$ is clear from the context. Each edge $e\in E$ with endpoints $u$ and $v$ is drawn as a Jordan curve between $\Gamma(u)$ and $\Gamma(v)$, meaning a non-self-intersecting curve with these two nodes as its endpoints, called the \emph{arc} $\Gamma(e)$, oriented from $\Gamma(u)$ to $\Gamma(v)$ when $G$ is directed and $e=(u,v)$. An arc passes through no node other than its two endpoints. More generally, for a path $P$ we write $\Gamma(P)$ for its \emph{arc}, the concatenation of the arcs of its edges. The positions of the nodes and the shapes of the arcs may be chosen freely, provided that the resulting picture unambiguously represents the graph.
Moreover, if $H$ is a subgraph of $G$, the \emph{restriction} of a drawing $\Gamma$ of $G$ to $H$ is the drawing of $H$ obtained by keeping the nodes $\Gamma(v)$ with $v\in V(H)$ and the arcs $\Gamma(e)$ with $e\in E(H)$.

A drawing $\Gamma$ is \emph{planar} if no two distinct arcs intersect except possibly at common nodes, and a graph is \emph{planar} if it admits a planar drawing. Note that, in a planar drawing, if $P$ is a path with $m$ vertices, then $\Gamma(P)$ is a Jordan curve containing $m$ nodes.

Given a subset $A\subseteq \mathbb{R}^2$, a \emph{region} of $A$ is a maximal connected open subset of $A$.  In particular, for a planar drawing $\Gamma(G)$ of $G$, the regions of $\mathbb{R}^2 \setminus \Gamma(G)$ are referred to as the \emph{faces} of $\Gamma$, denoted by $F(\Gamma(G))$. Note that there is exactly one unbounded face in $F(\Gamma(G))$, called the \emph{outer face}; all other bounded faces are \emph{internal faces}. The boundary of a face $f$ is denoted $\partial f$. For every cycle $C$ in $G$, the image $\Gamma(C)$ is a closed Jordan curve. By the Jordan curve theorem (cf.~\cite[Theorem 4.1.1]{Diestel}), 
the set $\mathbb{R}^2 \setminus \Gamma(C)$ has exactly two regions: one is  bounded and denoted by $\mathrm{int}(C)$; the other is unbounded and denoted by $\mathrm{ext}(C)$. Note that each of them has $\Gamma(C)$ as its boundary. For later use, we denote their closures as $\mathrm{Int}(C) = \mathrm{int}(C) \cup \Gamma(C)$ 
and $\mathrm{Ext}(C) = \mathrm{ext}(C) \cup \Gamma(C)$, respectively.
Furthermore, we recall the following corollary of the Jordan curve theorem:

\begin{lemma}[{\cite[Lemma~4.1.2(i)]{Diestel}}]\label{lmm:diestel-412}
Let $\gamma_1, \gamma_2, \gamma_3$ be three Jordan curves between the same two endpoints but otherwise disjoint. Then $\mathbb{R}^2\setminus (\gamma_1\cup \gamma_2 \cup \gamma_3)$ has exactly three regions, with boundaries $\gamma_1\cup \gamma_2$, $\gamma_2\cup \gamma_3$, and $\gamma_1\cup \gamma_3$.
\end{lemma}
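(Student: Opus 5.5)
The plan is to reduce Lemma~\ref{lmm:diestel-412} to the Jordan curve theorem applied to the three closed curves $C_{12}=\gamma_1\cup\gamma_2$, $C_{23}=\gamma_2\cup\gamma_3$ and $C_{13}=\gamma_1\cup\gamma_3$. Each is a closed Jordan curve, since the $\gamma_i$ share only their endpoints $x,y$.

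\textbf{Step 1: split by $C_{12}$.} Set $\Theta=\gamma_1\cup\gamma_2\cup\gamma_3$. The interior $\mathring\gamma_3=\gamma_3\setminus\{x,y\}$ is connected and disjoint from $C_{12}$, so it lies entirely in one region of $\mathbb{R}^2\setminus C_{12}$; call that region $A$ and the other region $B$. Then $B$ is disjoint from $\Theta$, is connected, and is open in $\mathbb{R}^2\setminus\Theta$. It is also closed there, because its closure adds only points of $C_{12}\subseteq\Theta$. Hence $B$ is a region of $\mathbb{R}^2\setminus\Theta$, and its boundary is $C_{12}$ by the Jordan curve theorem.

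\textbf{Step 2: $\gamma_3$ cuts $A$ into two pieces.} I claim $A\setminus\gamma_3$ has exactly two components, with boundaries $C_{13}$ and $C_{23}$. For this I apply the Jordan curve theorem to $C_{13}$. The curve $\mathring\gamma_2$ lies in one region of $\mathbb{R}^2\setminus C_{13}$, and I take $R_1$ to be the other region, so $R_1$ avoids $\Theta$. By the argument of Step 1, $R_1$ is then a region of $\mathbb{R}^2\setminus\Theta$ with boundary $C_{13}$. Symmetrically, $C_{23}$ yields a region $R_2$ with boundary $C_{23}$.

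It remains to check three things.
\begin{itemize}
\item $B$, $R_1$ and $R_2$ are pairwise distinct. Their boundaries $C_{12}$, $C_{13}$, $C_{23}$ are pairwise distinct, since, for instance, $C_{13}$ contains $\mathring\gamma_3$ and $C_{12}$ does not.
\item $R_1$ and $R_2$ lie inside $A$. Their boundaries contain points of $\mathring\gamma_3$, near which only points of $A$ occur off $\Theta$.
\item There are no further regions.
\end{itemize}

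\textbf{Step 3: no extra regions (the main obstacle).} The delicate part is showing that every point $p\notin\Theta$ lies in $B\cup R_1\cup R_2$. Suppose $p\in A$, and suppose $p\notin R_1$. Then $p$ lies in the region of $\mathbb{R}^2\setminus C_{13}$ that contains $\mathring\gamma_2$. From this I need to conclude that $p$ lies in the region of $\mathbb{R}^2\setminus C_{23}$ not containing $\mathring\gamma_1$, which is $R_2$.

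My first attempt would use the local structure near an interior point $z\in\mathring\gamma_3$. A small disc around $z$ meets $\Theta$ only in an arc of $\gamma_3$, so the disc minus that arc has exactly two sides. Every region of $\mathbb{R}^2\setminus\Theta$ contained in $A$ has $z$ in its closure; this is the frontier argument of Diestel, Lemma 4.1.2, using that the boundary of a region of $A\setminus\gamma_3$ must contain points of $\mathring\gamma_3$. Since there are only two sides, $A\setminus\gamma_3$ has at most two components, and $R_1$ and $R_2$ realise both.

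To justify that every such component touches $\mathring\gamma_3$, I would argue as follows. If a component $U$ of $A\setminus\gamma_3$ had boundary inside $C_{12}\cup\{x,y\}$, then $U$ would be open and closed in $\mathbb{R}^2\setminus C_{12}$, hence equal to $A$. That is impossible, because $R_1\neq R_2$ are both contained in $A$. Therefore the regions are exactly $B$, $R_1$ and $R_2$, with the stated boundaries.
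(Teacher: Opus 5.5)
The paper gives no proof of this lemma; it cites Diestel, and your argument reconstructs Diestel's proof. Your Steps~1--2, and the clopen argument showing that any extra component $U\subseteq A$ has a frontier point on $\mathring\gamma_3$, are sound for arbitrary Jordan arcs, since they use only the Jordan curve theorem and its frontier statement. The gap is the local claim that carries Step~3: ``a small disc around $z$ meets $\Theta$ only in an arc of $\gamma_3$, so the disc minus that arc has exactly two sides.''

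In Diestel, arcs are finite unions of straight line segments, and there the claim holds: take $z$ in the relative interior of a segment (or at a bend) and a disc small enough to miss all other segments. The statement here, however, is about general Jordan arcs, and the paper applies it that way, to the arcs $\Gamma(P_1),\Gamma(P_2),\Gamma(P')$ of an arbitrary planar drawing. For a topological arc, a round disc $D$ around $z$ need not meet $\gamma_3$ in a single subarc, however small $D$ is, because the arc may leave and re-enter $D$. Consequently $D\setminus\gamma_3$ can have more than two components. That only two components of $\mathbb{R}^2\setminus\Theta$ accumulate at $z$ is exactly the nontrivial content of the lemma. So, as written, Step~3 proves the statement only for polygonal arcs.

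There is also a smaller slip, present even in the polygonal case. Your clopen argument yields some $z_U\in\partial U\cap\mathring\gamma_3$; it does not show that $U$ has a pre-chosen $z$ in its closure. The repair is to run the disc argument at $z_U$ itself. Since $\partial R_1=C_{13}$ and $\partial R_2=C_{23}$ both contain $z_U$, and $R_1\neq R_2$, the two connected halves of the small disc minus $\gamma_3$ lie in $R_1$ and $R_2$ respectively. As $U$ must contain one of these halves, $U\in\{R_1,R_2\}$.

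For general arcs, the cleanest fix is to replace the local argument by a global count. By Alexander duality, $\tilde H_0(S^2\setminus\Theta)\cong\check H^1(\Theta)\cong\mathbb{Z}^2$. Hence $S^2\setminus\Theta$, and therefore $\mathbb{R}^2\setminus\Theta$, has exactly three components. Your Steps~1--2 already exhibit three distinct components $B,R_1,R_2$ with frontiers $C_{12},C_{13},C_{23}$, which finishes the proof. Alternatively, state and prove the lemma for polygonal arcs only, and justify separately that the drawings used in the paper may be taken polygonal.
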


The \emph{boundary walk} of $f \in F(\Gamma)$ is a closed walk $W$ in $G$ satisfying $\Gamma(W) = \partial f$, up to cyclic rotation and reversal. Since a cut edge of $N$ whose arc lies on $\partial f$ is traversed twice in $W$, $W$ may contain repeated vertices or edges. A face $f$ is a \emph{simple face} if $\partial f$ is a closed Jordan curve; equivalently, $f$ is a simple face if and only if its boundary walk $W$ is a cycle in $G$.

Two planar drawings $\Gamma_1$ and $\Gamma_2$ of a graph $N=(V,E)$ are \emph{strongly equivalent} if there exists a bijection $\sigma:F(\Gamma_1)\to F(\Gamma_2)$ such that $\sigma$ maps the outer face
of $\Gamma_1$ to the outer face of $\Gamma_2$ and, for every $x\in V\cup E$ and every $f\in F(\Gamma_1)$, the image $\Gamma_1(x)$ lies on $\partial f$ if and only if $\Gamma_2(x)$ lies on $\partial(\sigma(f))$.

A planar graph is \emph{outerplanar} if it admits a planar drawing in which every node lies on the boundary of the outer face. A phylogenetic network $N$ on a leaf set $X$ with root $\rho$ is \emph{terminal planar}~\cite{MoultonWu2022} if it admits a planar drawing in which the nodes corresponding to the set $\{\rho\} \cup X$ all lie on the boundary of the outer face. A planar drawing of a directed graph is an upward planar drawing if each arc is monotonically non-decreasing in the $y$-coordinate, and a directed graph is \emph{upward planar}~\cite{Thomassen1989} if it admits such a drawing. A phylogenetic network is outerplanar (resp. terminal planar, planar) if and only if its underlying graph possesses this property.
For phylogenetic networks, outerplanarity is strictly stronger than terminal planarity, which is in turn strictly stronger than upward planarity, and upward planarity is strictly stronger than planarity~\cite{MoultonWu2022}. Furthermore, the property of being planar, outerplanar, or upward planar is closed under taking subgraphs.

\section{Galled Networks}  \label{sec:galled}

A \emph{galled cycle} in a phylogenetic network $N$ is a subgraph $G$ consisting of two internally disjoint directed paths from a common tree vertex, called the \emph{top vertex} of $G$, to a common reticulation vertex $r$, such that every vertex on these two paths other than $r$ is a tree vertex.
Two distinct galled cycles $G_1$ and $G_2$ are said to \emph{overlap} if $V(G_1) \cap V(G_2) \neq \emptyset$. 

We use the same symbol for a galled cycle and its underlying graph when no confusion arises. A \emph{galled network} $N$ is a phylogenetic network in which every reticulation vertex is contained in a unique galled cycle (see Figure~\ref{fig:mainforbidden} for examples, all three phylogenetic networks are galled networks).

Lemma~\ref{lmm:retcutvertex} is an immediate consequence of a result of Gunawan, Rathin, and Zhang~\cite[Theorem~2.1(2)]{GuRaZh}.

\begin{lemma}\label{lmm:retcutvertex}
Let $N$ be a galled network, and let $r$ be a reticulation vertex with child $v$. Then $r$ is a cut vertex of $\overline{N}$, and the edge $\{r,v\}$ is a cut edge of $\overline{N}$.
\end{lemma}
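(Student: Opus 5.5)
The plan is to show directly that the edge $\{r,v\}$ is a cut edge of $\overline{N}$, and then to read off from this that $r$ is a cut vertex. Here $r$ is a reticulation vertex with unique child $v$. Suppose, for a contradiction, that $\{r,v\}$ is not a cut edge. Then there is a path $Q$ in $\overline{N}$ between $r$ and $v$ avoiding the edge $\{r,v\}$. Together with $\{r,v\}$ it forms a cycle $C$ through the edge $(r,v)$.

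To get the contradiction, I would take a cycle $C$ through $(r,v)$, either chosen minimal or arbitrary, and analyse its orientation. Orient $C$ as a sequence of maximal directed segments. Because $N$ is acyclic, $C$ is not a directed cycle. Hence $C$ alternates between ``sources'' and ``sinks''. Each source is a vertex with two out-edges on $C$, so it is a tree vertex (or the root, but the root has out-degree $1$, so it cannot be a source). Each sink has two in-edges on $C$, so it is a reticulation vertex. There are equally many sources and sinks, and at least one of each.

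Now use the galled property. Each sink $r_i$ lies in a unique galled cycle $G_i$, whose two parent paths run through tree vertices up to its top $t_i$. The edge $(r,v)$ lies on a segment of $C$ that leaves $r$ downward, so $r$ is not a sink of $C$. Its two parents are $p_1,p_2$, and on $C$ the vertex $r$ uses exactly one parent edge together with the child edge. I would then build a closed walk by replacing each segment of $C$ by paths inside galled cycles and tree paths. The goal is to derive a cycle containing $r$ whose sinks and sources force $r$'s galled cycle to be non-unique, or a second galled cycle through some reticulation. The cleanest route is likely the known argument of \cite[Theorem~2.1]{GuRaZh}: in a galled network, every reticulation edge's removal separates, or equivalently each biconnected component containing a reticulation is exactly one galled cycle. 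Since the statement is declared an immediate consequence of that theorem, my actual plan is to cite \cite[Theorem~2.1(2)]{GuRaZh}. That result says a network is galled iff, for each reticulation $r$, every path from $r$'s child region back to the rest of the network must pass through $r$. Concretely, it gives that the descendants of the child $v$ are reachable from outside only through $r$; in their terminology, $v$ is a visible/``split'' point. From this, every undirected path from $v$ to a non-descendant passes through $(r,v)$, which gives the cut edge directly.

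Given that $\{r,v\}$ is a cut edge, the cut vertex claim follows quickly. Removing $r$ separates $v$ from $p_1$: any $v$–$p_1$ path avoiding $r$ would combine with $(p_1,r),(r,v)$ into a cycle through $\{r,v\}$, which is impossible. Also $v\neq p_1$. Hence $r$ is a cut vertex.

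The main obstacle is the cut-edge step if one proves it from scratch rather than citing. The key point there is the claim that any undirected cycle through $(r,v)$ produces a sink reticulation $r'$ below $v$ whose galled cycle would need a tree path escaping the descendant set of $v$. I would handle it by taking the lowest sink on $C$ among descendants of $v$. Its galled cycle consists of tree vertices leading up to a top. I would then show that this top must be a descendant of $v$, because tree vertices reached upward from within $\mathrm{desc}(v)$ along in-edges stay in $\mathrm{desc}(v)$ until a reticulation is hit, and that reticulation is excluded. Applying this to each segment would show that $C$ cannot leave $\mathrm{desc}(v)$ except through $r$.
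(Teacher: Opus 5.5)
Your final plan is the paper's own: the paper gives no argument for this lemma beyond citing \cite[Theorem~2.1(2)]{GuRaZh}. Your deduction of the cut-vertex claim from the cut-edge claim (a $v$--$p_1$ path avoiding $r$ would close a cycle through $\{r,v\}$) is correct and supplies the small step the paper leaves implicit. Your descendant-closure fallback is also sound and would make the lemma self-contained, provided you state it as follows: the only edge of $\overline{N}$ between $D=\{x : x\le v\}$ and its complement is $\{r,v\}$. Indeed, an edge $(b,a)$ with $b\notin D$ and $a\in D$ forces $a$ to be a reticulation whose other parent $c$ lies in $D\cup\{r\}$. The tree-vertex path of $G_a$ from $c$ up to its top can neither contain $r$ nor leave $D$, so the top, and hence $b$, would lie in $D$, a contradiction.
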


\begin{lemma}\label{lmm:deg3-tree}
Let $N$ be a galled network and let $B$ be a 2-connected subgraph of 
$\overline{N}$. Then every vertex 
$v \in V(B)$ with $\deg_B(v) = 3$ is a tree vertex of $N$.
\end{lemma}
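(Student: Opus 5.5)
Since $N$ is binary, every vertex has degree $1$ or $3$ in $\overline{N}$, except the root which has degree $1$. Hence any vertex $v\in V(B)$ with $\deg_B(v)=3$ is either a tree vertex or a reticulation vertex of $N$; roots and leaves are excluded because they have degree $1$ in $\overline{N}$. We therefore only need to rule out that $v$ is a reticulation vertex. The natural tool is Lemma~\ref{lmm:retcutvertex}, which says that for a reticulation vertex $r$ with child $w$, the edge $\{r,w\}$ is a cut edge of $\overline{N}$.

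Suppose, for a contradiction, that $v$ is a reticulation vertex with $\deg_B(v)=3$, and let $w$ be its unique child. Since $\deg_{\overline{N}}(v)=3=\deg_B(v)$, all three edges of $\overline{N}$ incident with $v$ lie in $B$, and in particular $\{v,w\}\in E(B)$.

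The key step is the standard fact that no edge of a $2$-connected graph is a cut edge. Because $B$ is $2$-connected, it has at least three vertices and $B-v$ is connected, so $B-\{v,w\}$, the graph with the edge removed but both endpoints kept, still contains a path between $v$ and $w$. Concretely, $v$ has another neighbour $u\neq w$ in $B$. Since $B-v$ is connected, there is a $u$–$w$ path $Q$ avoiding $v$, and then $(v,u)$ followed by $Q$ is a $v$–$w$ path in $B\subseteq \overline{N}$ that does not use the edge $\{v,w\}$. This contradicts Lemma~\ref{lmm:retcutvertex}, so $v$ must be a tree vertex.

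I do not expect a real obstacle. The only point needing care is the degree bookkeeping: $\deg_B(v)=3$ forces the edge to the child into $B$, which is exactly what allows the cut-edge property to be contradicted.
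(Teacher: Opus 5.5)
Your proof is correct and follows the paper's argument. Assuming $v$ is a reticulation vertex forces its outgoing edge into $B$, which contradicts Lemma~\ref{lmm:retcutvertex} because a $2$-connected graph has no cut edge; the root and leaves are excluded by degree. The only difference is that you spell out explicitly, via the path through $u$ in $B-v$, why an edge of $B$ cannot be a cut edge, whereas the paper simply notes that a cut edge lies on no cycle.
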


\begin{proof}
Suppose for a contradiction that $v$ is a reticulation vertex $r$. Then all three edges incident to $r$ in $\overline{N}$ lie in $E(B)$, including the outgoing edge $e$, which is a cut edge of $\overline{N}$ by Lemma~\ref{lmm:retcutvertex}; hence $e$ lies on no cycle, contradicting the assumption that $B$ is 2-connected. Since the root and leaves have degree $1$, and $v$ is not a reticulation vertex, $v$ must be a tree vertex.
\end{proof}

We now turn to the main structural property of cycles in galled networks.

\begin{theorem}\label{cycleandgalledcycle}
Let $C$ be a cycle in a galled network $N$ and suppose that $v$ is a vertex of $C$. Then there exists a reticulation vertex $r$ on $C$ such that $v$ is contained in the galled cycle $G_r$ associated with $r$. 
In particular, if $C$ contains exactly one reticulation vertex $r$, then 
$C=G_r$.
\end{theorem}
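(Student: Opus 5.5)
Fix a cycle $C$ and a vertex $v$ on $C$. The plan is to decompose $C$ at its reticulation vertices and show that each resulting segment lies inside a single galled cycle.

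\textbf{Segments of $C$.} By Lemma~\ref{lmm:cycle}, $C$ contains at least one reticulation vertex. Let $r$ be a reticulation vertex on $C$. By Lemma~\ref{lmm:retcutvertex}, the outgoing edge of $r$ is a cut edge, so it lies on no cycle. Hence both edges of $C$ at $r$ are the two incoming edges of $r$, coming from its two parents. Now list the reticulation vertices on $C$ in cyclic order as $r_1,\dots,r_k$. They split $C$ into $k$ segments $S_i$, where $S_i$ runs from $r_i$ to $r_{i+1}$ (indices mod $k$). The interior vertices of each $S_i$ are tree vertices, and there is at least one of them, since adjacent reticulations would need the edge between them to be an outgoing edge. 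The interior of $S_i$ is therefore a tree path $P_i$. Its endpoints are parents of $r_i$ and of $r_{i+1}$ respectively, because $C$ enters each reticulation via incoming edges.

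\textbf{Each segment lies in one galled cycle.} By Lemma~\ref{lem:treepath-top}, $P_i$ has a highest vertex $s_i$, and $P_i$ is the union of two directed tree paths starting at $s_i$. Concatenating with the final edges gives directed paths $s_i \to \cdots \to r_i$ and $s_i \to \cdots \to r_{i+1}$ through tree vertices only. The key claim is that $S_i \subseteq G_{r_i}$ or $S_i \subseteq G_{r_{i+1}}$; this is the main obstacle. To prove it, note that the galled cycle $G_{r_i}$ consists of two directed tree paths from its top $t_i$ into $r_i$. One of these must end with the edge of $S_i$ entering $r_i$, so its last tree vertex $p$ (the endpoint of $P_i$ next to $r_i$) lies on $G_{r_i}$. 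In $N$, every tree vertex has a unique parent, so the ancestors of $p$ along tree vertices form a single backward chain. Both $s_i$ and $t_i$ lie on this chain, which means one of them is an ancestor of the other.
\begin{itemize}
\item If $t_i \ge s_i$, then the directed path from $s_i$ to $p$ is part of the galled side of $G_{r_i}$, so that half of $S_i$ lies in $G_{r_i}$.
\item Symmetrically, comparing with $t_{i+1}$ handles the other half.
\end{itemize}
It remains to combine the two halves, and here I expect a case analysis. Suppose the half toward $r_{i+1}$ is not contained in $G_{r_i}$. Then I would aim to show it lies in $G_{r_{i+1}}$, using $t_{i+1}\ge s_i$, which is forced by the unique-parent chain above $r_{i+1}$'s parent. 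The weaker statement actually needed is only that each vertex of $S_i$ lies in $G_{r_i}\cup G_{r_{i+1}}$, so a half-by-half argument suffices. The delicate case is when $t_i<s_i$, so the galled side of $G_{r_i}$ covers only part of the half. In that case I would use $t_i$, which has two tree-path descendants reaching $r_i$, together with the galled condition to derive a contradiction. The argument would construct a second tree path between two tree vertices, contradicting the uniqueness part of Lemma~\ref{lmm:cycle}. Alternatively, it would show that the reticulation $r_i$ lies in two galled cycles, contradicting uniqueness of $G_{r_i}$.

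\textbf{Conclusion.} Once the covering claim is established, any vertex $v$ of $C$ is either some $r_i$, which lies in $G_{r_i}$, or lies on some $S_i$ and hence in $G_{r_i}$ or $G_{r_{i+1}}$. This proves the first statement.

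For the second statement, suppose $k=1$. Then $C$ consists of a single segment $S_1$ from $r$ back to $r$, made of two directed tree paths from $s_1$ into $r$ via its two parents. These paths are internally disjoint because $C$ is a cycle. This is exactly a galled cycle with top $s_1$ and reticulation $r$. By uniqueness of the galled cycle containing $r$, we get $C=G_r$.
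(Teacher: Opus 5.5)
Your segment decomposition and your treatment of the case of exactly one reticulation vertex are fine. However, the covering claim on which the first statement rests is false, even in your weaker form ``every vertex of $S_i$ lies in $G_{r_i}\cup G_{r_{i+1}}$''. The case $t_i<s_i$ that you hope to rule out genuinely occurs, and it can occur at both ends of a segment at once.

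Take the network with edges $(\rho,s)$, $(s,a)$, $(s,b)$, $(a,a_1)$, $(a,a_2)$, $(b,b_1)$, $(b,b_2)$, $(a_1,r_1)$, $(a_2,r_1)$, $(b_1,r_2)$, $(b_2,r_2)$, $(a_2,r_3)$, $(b_2,r_3)$, with a leaf below each of $a_1,b_1,r_1,r_2,r_3$. It is a galled network: $G_{r_1}$ has top $a$, $G_{r_2}$ has top $b$, and $G_{r_3}$ has top $s$ with sides $(s,a,a_2,r_3)$ and $(s,b,b_2,r_3)$. The cycle $C=(s,a,a_1,r_1,a_2,r_3,b_2,r_2,b_1,b,s)$ has a segment between $r_2$ and $r_1$ with interior $(b_1,b,s,a,a_1)$ and highest vertex $s$. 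Yet $t_1=a<s$ and $t_2=b<s$, so $s\notin V(G_{r_1})\cup V(G_{r_2})$. The only galled cycle through $s$ whose reticulation lies on $C$ is $G_{r_3}$, and $r_3$ does not bound that segment.

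Consequently neither contradiction you sketch (a second tree path, or $r_i$ lying in two galled cycles) can be derived. Your assertion that $t_{i+1}\ge s_i$ is ``forced'' is also unjustified: the unique-parent chain only makes $t_{i+1}$ and $s_i$ comparable, not ordered in the direction you need.

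The missing idea is that the galled cycle containing $v$ may belong to a reticulation arbitrarily far along $C$. No argument confined to one segment and its two bounding reticulations can work; you must pass to a smaller cycle. The paper does this by induction on the number of reticulation vertices of $C$:
\begin{enumerate}
\item Starting at $v$, follow $C$ along directed edges down to the first reticulation $r$.
\item If the top of $G_r$ is not strictly below $v$, your half-argument (the unique-parent chain) gives $v\in V(G_r)$.
\item Otherwise the top $s$ of $G_r$ lies strictly below $v$ on that directed run. Walk along $C$ from $v$ in the opposite direction to the first vertex $u$ on the other side $Q_r$ of $G_r$; this vertex exists, since the other parent of $r$ is on $C$.
\item The arc of $C$ from $u$ back to $v$, the tree path from $v$ to $s$, and $Q_r[s,u]$ form a cycle through $v$. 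It avoids $r$, so it has fewer reticulation vertices, and the induction hypothesis applies.
\end{enumerate}
In the example above, this step replaces $(s,a,a_1,r_1,a_2)$ by $(s,a,a_2)$. The resulting cycle has reticulations $r_3,r_2$, and the next round finds $s\in V(G_{r_3})$.
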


\begin{proof}
If $v$ is a reticulation vertex on $C$, then $v$ is contained in its own galled cycle. Hence we may further assume that $v$ is a tree vertex. We argue by induction on $k=\nret{C}$, the number of reticulation vertices on the cycle $C$. 

For the case $k=1$,  every vertex of $C$ other than its reticulation vertex $r$ is a tree vertex. 
Removing $r$ from $C$ yields a tree path $P$ between the two parents of $r$. By Lemma~\ref{lem:treepath-top}, 
there exists a highest vertex $s$ in $P$ which is an ancestor of every other vertex in $P$. This implies that 
$C$ is the union of the two internally disjoint directed paths in $N$ from $s$ to $r$. Thus $C=G_r$.

Now consider the case $k\ge 2$, with the induction assumption that the statement holds for cycles containing fewer than $k$ reticulation vertices.
Denote the parent of $v$ by $v_0$ and its two children by $v_1$ and $v_2$. As $C$ contains two vertices from $\{v_0,v_1,v_2\}$, swapping $v_1$ and $v_2$ if needed, we may assume that $v_1$ is contained in $C$. Denote the transversal of $C$ starting from the directed edge $(v,v_1)$ by $[u_0:=v,u_1:=v_1,u_2,\dots,u_m,u_{m+1}]$ for some $m\ge 2$, using the convention that $u_{m+1}=u_0$. 
For $0\leq p\le q \leq m+1$, let $U[p,q]$ be the subpath $[u_p,u_{p+1},\dots,u_q]$ of $C$ between $u_p$ and $u_q$, with $U[p,p]=u_p$. In particular, $U[0,m+1]=C$.

As $N$ is acyclic, $C$ is not a directed cycle, hence there exists a (necessarily unique) index $j\in\{1,\dots,m\}$ such that $(u_{i-1},u_i)\in E(N)$ for every $1\leq i\leq j$, while $(u_{j+1},u_j)\in E(N)$.
Consequently, the vertex $r := u_j$ has two parents  $u_{j-1}$ and $u_{j+1}$, and hence is a reticulation vertex in $N$. Furthermore, $U[0,j-1]$ is a tree path. Otherwise, if some vertex of $U[0,j-1]$ other than $u_0$ were a reticulation vertex, then its outgoing edge would lie on $C$, contradicting Lemma~\ref{lmm:retcutvertex}.
Finally, note that $u_0$ is the unique highest vertex in  $U[0,j-1]$.  

Consider the galled cycle $G_r$ associated with $r$, whose top vertex is denoted by $s$. Let $P_r$ and $Q_r$ be the two directed paths from $s$ to $r$ in $G_r$ so that  $u_{j-1}$, one parent of $r$, is contained in $P_r$ while the other parent $u_{j+1}$ is contained in $Q_r$.
We distinguish two cases according to the position of $s$.

\smallskip
\noindent
\textit{Case 1. $s\notin \{u_1,\dots,u_{j-1}\}$.}

Since $U[0,j]$ is a directed path ending at the reticulation vertex $r$ and whose internal vertices are tree vertices, its last edge $(u_{j-1},r)$ lies on $P_r$. As $s\notin \{u_1,\dots,u_{j-1}\}$, whole $U[0,j]$ is contained in $P_r$. Hence $v=u_0\in V(G_r)$, and the conclusion holds.

\smallskip
\noindent
\textit{Case 2. $s\in \{u_1,\dots,u_{j-1}\}$.}

Let $s = u_a$ for some $1 \leq a \leq j-1$. Then $U[a,j]$ is coincident with $P_r$ as they have the same set of vertices. Furthermore, as $u_0$ is the highest vertex of $U[0,j-1]$, it is an ancestor of $s=u_a$. Traversing $C$ from $u_0$ along $[u_0,u_m,u_{m-1},\dots,u_{j+1},u_j]$, let $u_b$ be the first vertex in this traversal 
that belongs to $Q_r$. Then
we have $j+1\le b\le m$ because $u_{j+1}\in Q_r$ while $u_0\not \in Q_r$ (as it is an ancestor of $s$).

Let $Q_r[u_a,u_b]$ denote the subpath of $Q_r$ between $u_a$ and $u_b$ and let the path $U[0,a]$ be a subpath of the tree path $U[0,j-1]$. Both $U[0,a]$ and $Q_r[u_a,u_b]$ are tree paths.  Set $C'=U[b,m+1]\cup U[0,a]\cup Q_r[u_a,u_b]$. Note that $U[b,m+1]$ contains at least one reticulation vertex; otherwise $C'$ would be a cycle consisting only of tree vertices, contradicting Lemma~\ref{lmm:cycle}. Since $u_b\ne r$ lies on $Q_r$, the vertex $u_b$ is a tree vertex. Thus splitting $C$ at $u_b$ gives $k=\nret{C}=\nret{U[0,b]}+\nret{U[b,m+1]}$. As $b\ge j+1$ and $\nret{U[0,j+1]}=1$, we have $\nret{U[0,b]}\ge\nret{U[0,j+1]}=1$, hence $\nret{U[b,m+1]}\le k-1$. As $\nret{U[0,a]}=\nret{Q_r[u_a,u_b]}=0$, we have $\nret{C'}=\nret{U[b,m+1]}\le k-1$. Furthermore, as $v \in V(C')$, the induction hypothesis implies that there exists a reticulation vertex $r'$ on $C'$ such that $v$ is contained in the galled cycle $G_{r'}$ of $r'$.
As every reticulation vertex on $C'$ lies on $U[b,m+1]\subsetneq C$, it follows that $r'$ lies on $C$, as required.
\end{proof}

\begin{remark} \label{rmk:edgeambig}
Theorem~\ref{cycleandgalledcycle} guarantees the existence of a reticulation vertex $r$ on $C$ such that $v \in V(G_r)$, but does not specify which two of the edges of $\overline{N}$ incident to $v$ are contained in $G_r$.
\end{remark}

\begin{lemma} \label{lmm:treepathuni}
Let $N$ be a galled network, and let $u$ and $v$ be two distinct tree vertices in $N$. Then either every path between $u$ and $v$ contains the outgoing edge of some reticulation vertex, or there exists a unique tree path between $u$ and $v$ in $N$.
\end{lemma}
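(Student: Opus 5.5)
The plan is to prove the dichotomy by showing that if the first alternative fails, then a tree path exists, and to get uniqueness from Lemma~\ref{lmm:cycle}. Uniqueness is immediate: Lemma~\ref{lmm:cycle} says that two tree vertices are joined by at most one path consisting only of tree vertices. So it remains to show the following: if some path $P$ between $u$ and $v$ contains no outgoing edge of a reticulation vertex, then some tree path between $u$ and $v$ exists.

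Among all paths between $u$ and $v$ that use no outgoing edge of any reticulation vertex, choose $P$ with $\nret{P}$ minimal, and suppose for a contradiction that $\nret{P}\ge 1$. Since $u$ and $v$ are tree vertices, every reticulation vertex $r$ on $P$ is internal, so $P$ uses exactly two of the three edges at $r$. The outgoing edge of $r$ is excluded by assumption, so $P$ passes through $r$ via both incoming edges $\{p_1,r\}$ and $\{r,p_2\}$, where $p_1,p_2$ are the parents of $r$.

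Now consider the galled cycle $G_r$. Removing $r$ from $G_r$ leaves a path $T$ between $p_1$ and $p_2$ (through the top vertex $s$) whose vertices are all tree vertices, by the definition of a galled cycle. Replace the subpath $(p_1,r,p_2)$ of $P$ by $T$ to get a walk $W$ from $u$ to $v$. Then $W$ has the following properties:
\begin{itemize}
\item its vertex set is contained in $(V(P)\setminus\{r\})\cup V(T)$, so every reticulation vertex on $W$ is a reticulation vertex of $P$ other than $r$;
\item each of its edges is either an edge of $P$ or an edge between two tree vertices of $T$, so no edge of $W$ is the outgoing edge of a reticulation vertex.
\end{itemize}

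Next, shortcut repeated vertices in $W$ to extract a path $P'$ between $u$ and $v$ with $V(P')\subseteq V(W)$ and $E(P')\subseteq E(W)$. Then $P'$ again avoids all outgoing edges of reticulation vertices, and $\nret{P'}\le \nret{P}-1$. This contradicts the minimality of $P$. Hence $\nret{P}=0$, and since the endpoints are tree vertices, $P$ is a tree path. Equivalently, one can argue by induction on $\nret{P}$.

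I expect no real obstacle. The only points needing care are two checks:
\begin{itemize}
\item that an internal reticulation vertex on $P$ forces $P$ to use both incoming edges, which uses the degree count and the excluded outgoing edge;
\item that extracting a path from the walk $W$ keeps both invariants. This holds because the path's vertices and edges are subsets of those of $W$.
\end{itemize}
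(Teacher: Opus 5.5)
Your proposal is correct and is essentially the paper's proof: you take a path avoiding all reticulation out-edges with the fewest reticulation vertices, reroute around a reticulation $r$ through the tree path of $G_r$ via its top vertex, extract a path from the resulting walk to contradict minimality, and get uniqueness from Lemma~\ref{lmm:cycle}. The one implicit step, which the paper also leaves implicit, is that $\nret{P}=0$ makes $P$ a tree path: the root and leaves have degree $1$, so they cannot be internal vertices of $P$.
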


\begin{proof}
Since $N$ is connected, there exists at least one path between $u$ and $v$. Let $\mathcal{P}$ be the set of all such paths that avoid all reticulation vertex outgoing edges. If $\mathcal{P}=\emptyset$, then every path between $u$ and $v$ contains the outgoing edge of some reticulation vertex, and we are done. Hence assume that $\mathcal{P}\neq\emptyset$.

We show that $\mathcal{P}$ contains a tree path. Suppose not, such that every path in $\mathcal{P}$ contains at least one reticulation vertex. Let $P\in\mathcal{P}$ be a path with minimal number of reticulation vertices, and write $\nret{P}=k\geq 1$. 
Let $r$ be a reticulation vertex on $P$ with its corresponding galled cycle $G_r$. Since $P$ avoids the outgoing edge of $r$ and $r$ is not an endpoint of $P$, the path $P$ must contain the two incoming edges $\{p_1,r\}$ and $\{p_2,r\}$ of $r$. Hence $P$ contains the subpath $P_r=\{p_1,r\}\cup\{r,p_2\}$. Let $P_r^c$ be the tree path in $G_r$ between $p_1$ and $p_2$ that passes through the top vertex $s$ of $G_r$ and avoids $r$. Replacing $P_r$ in $P$ with $P_r^c$ yields a connected walk $W$ between $u$ and $v$ in $\overline{N}$. Hence $\nret{W}=k-1$. Note that the walk $W$ contains a path $P'$ between $u$ and $v$ consisting only of the vertices of $W$. Consequently, $P' \in \mathcal{P}$ and $\nret{P'} \leq k-1$, which contradicts the minimality of $k$. Therefore, there exists at least one tree path between $u$ and $v$ in $\mathcal{P}$. Furthermore,
by Lemma~\ref{lmm:cycle}, such a tree path is unique.
\end{proof}


\begin{corollary}\label{cor:treepath-2-connected}
Let $N$ be a galled network and let $H$ be a 2-connected subgraph of $\overline{N}$. Then there is a unique tree path in $\overline{N}$ between two distinct tree vertices $u,v\in V(H)$.
\end{corollary}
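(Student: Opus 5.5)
We derive Corollary~\ref{cor:treepath-2-connected} from Lemma~\ref{lmm:treepathuni}. Fix distinct tree vertices $u,v\in V(H)$. The lemma gives exactly two possibilities: either there is a unique tree path between $u$ and $v$ in $N$, or every path between $u$ and $v$ in $\overline{N}$ contains the outgoing edge of some reticulation vertex. It therefore suffices to rule out the second alternative by exhibiting one path between $u$ and $v$ that avoids all such outgoing edges.

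Since $H$ is 2-connected and $u\neq v$, it contains a path $P$ between $u$ and $v$; any path inside $H$ serves as the witness. Let $r$ be a reticulation vertex and $e=\{r,w\}$ its outgoing edge. By Lemma~\ref{lmm:retcutvertex}, $e$ is a cut edge of $\overline{N}$, so $e$ lies on no cycle of $\overline{N}$. In a 2-connected graph every edge lies on a cycle: for an edge $\{x,y\}$ of $H$, the graph $H-x$ is connected, and since $H$ has more than two vertices, $x$ has another neighbour $x'\neq y$. A path from $x'$ to $y$ in $H-x$, together with $x$, closes a cycle through $\{x,y\}$. Such a cycle also lies in $\overline{N}$, so $e\notin E(H)$. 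Hence $P$ avoids every outgoing edge of a reticulation vertex, the first alternative of Lemma~\ref{lmm:treepathuni} fails, and the unique tree path between $u$ and $v$ exists.

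Uniqueness is already part of Lemma~\ref{lmm:treepathuni}, or follows independently from the second statement of Lemma~\ref{lmm:cycle}. The only point needing any care is that each edge of a 2-connected graph lies on a cycle, which is the routine argument above using the paper's definition that a $k$-connected graph has more than $k$ vertices. Note that the resulting tree path need not lie inside $H$; the statement only claims it exists in $\overline{N}$.
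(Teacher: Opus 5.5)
Your proposal is correct and follows the paper's proof: Lemma~\ref{lmm:retcutvertex} makes every reticulation out-edge a cut edge lying on no cycle, so $H$ contains none of them, and any $u$--$v$ path in $H$ therefore rules out the first alternative of Lemma~\ref{lmm:treepathuni}. The only addition is your explicit check that every edge of a 2-connected graph lies on a cycle, which the paper takes for granted.
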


\begin{proof}
By Lemma~\ref{lmm:retcutvertex}, the outgoing edge of every reticulation vertex is a cut edge of $\overline{N}$, and hence lies on no cycle. Therefore $H$ contains no outgoing edge of a reticulation vertex.

Since $H$ is 2-connected, no path between $u$ and $v$ in $H$ contains the outgoing edge of a reticulation vertex, By Lemma~\ref{lmm:treepathuni} the unique tree path between $u$ and $v$ exists.
\end{proof}

\begin{lemma} \label{lmm:overlapone}
For any pair of distinct galled cycles in a galled network $N$, their intersection is either empty or a single tree path.
\end{lemma}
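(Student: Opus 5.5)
Let $G_1$ and $G_2$ be distinct overlapping galled cycles, with reticulation vertices $r_1\neq r_2$ and top vertices $s_1,s_2$. The plan is to show that $I=G_1\cap G_2$ contains only tree vertices, is connected, and is a path.

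\textbf{Step 1: $I$ contains no reticulation vertex.} In a galled network, each reticulation vertex lies in a unique galled cycle. A reticulation vertex of $G_1$ other than $r_1$ would have two parents there, but all of $G_1$ except $r_1$ consists of tree vertices. So the only reticulation vertex of $G_1$ is $r_1$, and likewise for $G_2$. If $r_1\in V(G_2)$, then $r_1=r_2$, so $G_1=G_2$ by uniqueness, a contradiction. Hence every vertex of $I$ is a tree vertex.

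\textbf{Step 2: $I$ is connected and is a path.} Let $u,v\in V(I)$ be distinct. Both are tree vertices, and both $G_1\setminus\{r_1\}$ and $G_2\setminus\{r_2\}$ are tree paths (they are $P_{r}\cup Q_{r}$ with $r$ removed). Each of these contains a tree path between $u$ and $v$. By the uniqueness part of Lemma~\ref{lmm:cycle}, these two tree paths coincide, so this common path lies in $I$. Therefore $I$ is connected.

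Since $I$ is a connected subgraph of the cycle $G_1$, it is either a path or all of $G_1$. It cannot be all of $G_1$, since $r_1\notin I$. So $I$ is a subpath of $G_1-r_1$. One point needs care: an edge $\{u,v\}$ of $G_1$ with both endpoints in $V(G_2)$ must itself be an edge of $G_2$. This also follows from uniqueness. The edge is a tree path of length one, and $G_2-r_2$ contains a tree path between $u$ and $v$, so that tree path must be the edge itself. Hence the edge set of $I$ is exactly the induced one, and $I$ is a single tree path, possibly consisting of a single vertex.

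The main obstacle is Step 1, which relies on showing that a galled cycle has exactly one reticulation vertex. Everything after that reduces to the uniqueness of tree paths.
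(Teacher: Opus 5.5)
Your proof is correct and follows essentially the paper's route. Both arguments first rule out a shared reticulation vertex. Both then use the absence of cycles consisting only of tree vertices to show that $G_1\cap G_2$ is connected, hence a subpath of $G_1-r_1$: you do this through the uniqueness clause of Lemma~\ref{lmm:cycle}, while the paper notes that two paths with disconnected intersection span a cycle. The paper also uses a degree count at a tree vertex to show the intersection cannot be a single vertex, but the statement as given does not need this.
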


\begin{proof}
Since each reticulation vertex belongs to a unique galled cycle in $N$, two distinct galled cycles cannot share a reticulation vertex. Hence their overlap consists solely of tree vertices. Moreover, no maximal common connected subgraph can consist of a single vertex, since otherwise that vertex would be incident with four distinct edges in the two galled cycles, contradicting the degree constraints of a tree vertex.

Let $G_1$ and $G_2$ be two distinct galled cycles. For $i=1,2$, let $P_i$ be the tree path obtained from $G_i$ by deleting its reticulation vertex together with the two edges of $G_i$ incident with it. If $P_1\cap P_2$ were non-empty and not connected, then, by the fact that the union of two paths whose intersection is not connected contains a cycle, $P_1\cup P_2$ would contain a cycle whose vertices are all tree vertices, contradicting Lemma~\ref{lmm:cycle}. Hence $P_1\cap P_2$ is either empty or a single tree path.
\end{proof}

For two overlapping galled cycles $G_1$ and $G_2$, we refer to the single tree path on which they overlap as the \emph{intersecting path} of $G_1$ and $G_2$.

\begin{lemma} \label{lmm:twotopcompa}
Let $N$ be a galled network. If two distinct galled cycles $G_1$ and $G_2$ intersect, then their respective top vertices $s_1$ and $s_2$ satisfy either $s_1 \leq s_2$ or $s_2 \leq s_1$ in $N$.
\end{lemma}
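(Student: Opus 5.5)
By Lemma~\ref{lmm:overlapone}, the intersection of $G_1$ and $G_2$ is a single tree path $I$. I will pick a vertex $w$ of $I$ and show that both $s_1$ and $s_2$ are ancestors of $w$ along directed tree paths. The comparability of $s_1$ and $s_2$ should then follow from the fact that each tree vertex has a unique parent.

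First, let $w$ be any vertex of $I$. It is a tree vertex, since the intersection contains no reticulation vertex. By the definition of a galled cycle, $G_i$ is the union of two directed paths from $s_i$ to its reticulation vertex $r_i$. Hence every vertex of $G_i$ is a descendant of $s_i$, and there is a directed path $D_i$ inside $G_i$ from $s_i$ to $w$. Every vertex of $D_i$ is a tree vertex, because $w\neq r_i$ and $r_i$ is the only reticulation vertex of $G_i$.

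Second, I will trace back from $w$. Each vertex of $D_1$ and of $D_2$ other than $s_i$ is a tree vertex, so its predecessor on the path is its unique parent in $N$. Consequently $D_1$ and $D_2$ both coincide with the backward chain of unique parents starting at $w$, up to the points where they stop at $s_1$ and $s_2$ respectively. Since both are initial segments, read backwards from $w$, of the same parent chain $w, \mathrm{parent}(w), \dots$, one of them contains the other. If $D_1$ is the shorter one, then $s_1$ lies on $D_2$, and so $s_2\le\ldots$ wait: more precisely, $s_2$ is an ancestor of or equal to $s_1$, that is, $s_1\le s_2$. The other case gives $s_2\le s_1$. The parent chain is well defined because $s_i$ is a tree vertex and every intermediate vertex is a tree vertex, hence has in-degree one.

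I do not expect a real obstacle here. The only point needing care is the justification that the backward walk along $D_i$ follows unique parents, namely that no vertex of $D_i$ other than the excluded $r_i$ is a reticulation vertex. This follows directly from the definition of a galled cycle together with $w\neq r_i$. Acyclicity is not even needed beyond the definition of $\le$.
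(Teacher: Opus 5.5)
Your proof is correct, and it takes a genuinely different and considerably shorter route than the paper.

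\textbf{The paper's route.} It argues by contradiction. Assuming $s_1\parallel s_2$, it takes a lowest common ancestor $v$ of $s_1$ and $s_2$. It then uses Lemma~\ref{lmm:retcutvertex} (outgoing edges of reticulations are cut edges) to show that the two directed paths from $v$ to $s_1$ and to $s_2$ contain no reticulation. Next it takes the highest vertex $u$ of the intersecting path (Lemmas~\ref{lmm:overlapone} and~\ref{lem:treepath-top}). Finally it assembles $P_1\cup P_2\cup Q_1\cup Q_2$ into a cycle consisting only of tree vertices, contradicting Lemma~\ref{lmm:cycle}. This last step requires checking by hand that the four paths really do form a cycle.

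\textbf{Your route.} You work directly upward from a common vertex $w$. Every vertex of $D_i$ other than $s_i$ has in-degree one, so $D_i$ read backwards is forced to be an initial segment of the unique-parent chain of $w$. Hence both tops lie on a single chain. This avoids lowest common ancestors, cut edges and the cycle construction entirely.

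\textbf{What your argument gives in addition.} It proves more than the statement. If $D_2$ is the shorter segment, then $s_2$ lies on $D_1\subseteq G_1$. So $s_2\le s_1$ already yields $s_2\in V(G_1)\cap V(G_2)$. This is exactly the first claim in the proof of Lemma~\ref{lmm:twotoploca}, which the paper establishes there by a separate tree-vertex-cycle contradiction. The paper's route fits the technique used throughout Section~\ref{sec:galled}, but for this statement it yields nothing beyond comparability.

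\textbf{Two minor remarks.}
\begin{enumerate}
\item You do not need the full strength of Lemma~\ref{lmm:overlapone}. You only need that a common vertex is not a reticulation. A reticulation lying in both cycles would have to be the bottom of both, contradicting the uniqueness clause in the definition of a galled network.
\item The stray ``wait'' in your third paragraph should be removed. The conclusion you reach there, $s_1\le s_2$, is the correct one in the paper's notation.
\end{enumerate}
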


\begin{proof}
Suppose for contradiction that $s_1$ and $s_2$ are incomparable such  that $s_1 \parallel s_2$. Let $v$ be the lowest common ancestor of $s_1$ and $s_2$ in $N$ such that $v \notin \{s_1, s_2\}$. Moreover, $v$ is neither the root nor a reticulation vertex, as the unique child of either would be a lower common ancestor of $s_1$ and $s_2$. Hence $v$ is a tree vertex, and by the minimality of $v$ there exist two directed paths $P_1$ from $v$ to $s_1$ and $P_2$ from $v$ to $s_2$ with $P_1 \cap P_2 = \{v\}$. Suppose, without loss of generality, that some reticulation vertex $r$ lies on $P_1$. Since $r\neq s_1$, the outgoing edge of $r$ lies on $P_1$. By Lemma~\ref{lmm:retcutvertex}, this edge is a cut edge of $\overline N$, and every path between $s_1$ and $s_2$ uses this edge. Note that a cut edge cannot lie on either of the cycles $G_1$ and $G_2$. If $G_1$ and $G_2$ intersect, then $G_1\cup G_2$ is connected, and hence contains a path between $s_1$ and $s_2$. This path avoids the cut edge, a contradiction.

Hence both $P_1$ and $P_2$ are directed tree paths.  We also write $P_i$ ($i=\{1,2\}$) for the corresponding undirected path in $\overline N$. By Lemma~\ref{lmm:overlapone}, $G_1$ and $G_2$ intersect on a single tree path $P$. By Lemma~\ref{lem:treepath-top}, $P$ has a unique highest vertex; denote it by $u$. Let $Q_i$ ($i=1,2$) be the directed subpath (and its corresponding undirected subpath) of $G_i$ from $s_i$ to $u$. Then $Q_1\cap Q_2=\{u\}$. Since $u$ is not a reticulation vertex, each $Q_i$ is a tree path.

Considering these paths in the underlying graph, $C := P_1 \cup P_2 \cup Q_1 \cup Q_2$ is a cycle. If $P_1$ and $Q_2$ in $N$ shared a vertex $w$, then $w\leq s_1$ and $s_2\leq w$, hence $s_2\leq s_1$, contradicting $s_1\parallel s_2$; the case of $P_2$ and $Q_1$ is symmetric. Note that $C$ contains only tree vertices, contradicting Lemma~\ref{lmm:cycle}, hence $s_1$ and $s_2$ are comparable.
\end{proof}

A tree vertex $v$ is called \emph{primitive} if there exist two distinct galled cycles $G_1$ and $G_2$ such that $v$ is an internal vertex on their intersecting path. In particular, $|V(G_1)\cap V(G_2)|\geq 3$.

\begin{lemma}\label{lmm:neighbourprimitive}
Let $v$ be a primitive tree vertex in a galled network $N$. For any two distinct galled cycles $G_1$ and $G_2$ such that $v$ is an internal vertex of their intersecting path, $v$ has a neighbour $u\notin V(G_1)\cup V(G_2)$.
\end{lemma}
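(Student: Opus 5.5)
Let $P$ denote the intersecting path of $G_1$ and $G_2$, which by Lemma~\ref{lmm:overlapone} is a single tree path, and let $v$ be an internal vertex of $P$. The plan is a degree count at $v$. Since $v$ is a tree vertex it has exactly three neighbours in $\overline{N}$. Because $v$ is internal on $P$, two of these neighbours, say $a$ and $b$, are its neighbours on $P$, so both edges $\{v,a\},\{v,b\}$ lie in $G_1$ and in $G_2$. As $v$ has degree $2$ in each cycle, these are exactly the two edges of $v$ in $G_1$ and in $G_2$. Let $u$ be the third neighbour of $v$. It remains to show $u\notin V(G_1)\cup V(G_2)$.

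Suppose, say, $u\in V(G_1)$. The edge $\{v,u\}$ is not an edge of $G_1$, so it is a chord of the cycle $G_1$. Then $G_1$ splits into two paths between $v$ and $u$, and each of them together with the edge $\{v,u\}$ forms a cycle. I would derive a contradiction from the structure of a galled cycle: every vertex of $G_1$ other than its reticulation vertex $r_1$ is a tree vertex, and $r_1$ has both parents on $G_1$. If $u=r_1$, then $r_1$ would have three neighbours $v$ and its two parents in $G_1$; but $u\neq$ both parents in $G_1$ since those are $G_1$-edges and $\{v,u\}$ is not, so $r_1$ would have in-degree $3$ or an outgoing edge to $v$ forming a cycle containing that outgoing edge, contradicting Lemma~\ref{lmm:retcutvertex}. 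Otherwise $u\neq r_1$ is a tree vertex of $G_1$. The arc of $G_1$ between $u$ and $v$ avoiding $r_1$ is a tree path, and together with $\{v,u\}$ it gives a cycle consisting only of tree vertices, contradicting Lemma~\ref{lmm:cycle}. Alternatively one can note that the tree path $G_1-r_1$ together with the edge $\{u,v\}$ gives two distinct tree paths between $u$ and $v$, which is forbidden by the uniqueness part of Lemma~\ref{lmm:cycle}. The same argument with $G_2$ gives $u\notin V(G_2)$.

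The main obstacle is to handle the case $u=r_1$ cleanly. Here $v$ would be a parent of $r_1$, since $r_1$'s out-edge lies on no cycle, and would have to be one of its two $G_1$-parents. But $v$'s $G_1$-edges are exactly $\{v,a\},\{v,b\}$, so this would force $u\in\{a,b\}$, contradicting the fact that the three neighbours of $v$ are distinct. With this point settled, the lemma follows.
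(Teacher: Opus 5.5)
Your proof is correct and follows the paper's argument. You take the third neighbour $u$ of $v$; if $u\in V(G_1)$, the reticulation-free arc of $G_1$ between $u$ and $v$ together with the chord $\{u,v\}$ gives a cycle of tree vertices, contradicting Lemma~\ref{lmm:cycle}. The only difference is that you rule out $u=r_1$ by hand, using that the out-edge of $r_1$ is a cut edge, whereas the paper gets the same conclusion in one step by applying Lemma~\ref{lmm:deg3-tree} to the 2-connected graph $G_1\cup\{u,v\}$; that lemma packages exactly your reasoning.
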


\begin{proof}
Let $P=G_1\cap G_2$. Since $v$ is an internal vertex of $P$, two edges incident with $v$ lie on $P$. As $v$ is a tree vertex, $\deg_{\overline N}(v)=3$; let $u$ be its remaining neighbour in $\overline N$.

We claim that $u\notin V(G_1)\cup V(G_2)$. Suppose that $u\in V(G_1)$. Then $G_1 \cup \{u,v\}$ is 2-connected and both $u$ and $v$ have degree $3$ in it; hence, by Lemma~\ref{lmm:deg3-tree}, both are tree vertices. Since $G_1$ has exactly one reticulation vertex, one of the two subpaths of $G_1$ between $u$ and $v$ contains no reticulation vertex. Together with the edge ${u,v}$, they form a cycle consisting entirely of tree vertices, contradicting Lemma~\ref{lmm:cycle}. Thus $u\notin V(G_1)$. Similarly, $u\notin V(G_2)$, and hence $u\notin V(G_1)\cup V(G_2)$.
\end{proof}

\begin{lemma} \label{lmm:twotoploca}
Let $N$ be a galled network, and let $G_1,G_2$ be two distinct overlapping galled cycles in $N$ with top vertices $s_1,s_2$. Let $P$ be their intersecting path, with endpoints $t_1$ and $t_2$. Suppose, without loss of generality, that $s_2\leq s_1$. Then $s_2$ is the highest vertex of $P$.

In particular, exactly one of the following holds:
\begin{enumerate}
\item[(i)] $s_1=s_2$. In this case, $s_1$ is a primitive tree vertex, and $P[s_1,t_1]$ and $P[s_1,t_2]$ are two directed paths in $N$.
\item[(ii)] $s_2<s_1$. In this case, $P$ is a directed path in $N$ starting at $s_2$.
\end{enumerate}
\end{lemma}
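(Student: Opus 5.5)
We prove first that $s_2\in V(P)$, and then that no vertex of $P$ lies strictly above $s_2$. By Lemma~\ref{lmm:overlapone}, $P$ is a tree path, so Lemma~\ref{lem:treepath-top} gives it a unique highest vertex $u$. Let $R$ be the reticulation vertex of $G_1$.

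To show $s_2\in V(G_1)$, take the directed path $D$ from $s_1$ to $s_2$ (it exists since $s_2\le s_1$), and the directed subpath $Q_2$ of $G_2$ from $s_2$ to $u$; note $u\le s_2$ because every vertex of $G_2$ is a descendant of $s_2$. Suppose $s_2\notin V(G_1)$. Let $Q_1$ be the directed subpath of $G_1$ from $s_1$ to $u$, which is a tree path since $u\neq R$. Follow $D$ backwards from $s_2$ until it first hits $V(G_1)$, at a vertex $w$. If $D$ passes through a reticulation vertex, its outgoing edge is a cut edge by Lemma~\ref{lmm:retcutvertex}. But $G_1\cup G_2$ is connected and contains both $s_1$ and $s_2$, so this cut edge could not separate them, contradicting the fact that it lies on $D$ between them (and similarly for the segment of $D$ from $w$). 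Hence the segment $D[w,s_2]$ is a tree path. Then $D[w,s_2]$, followed by $Q_2$ up to its first vertex $x$ on $G_1$, followed by the tree path in $G_1$ from $x$ back to $w$ avoiding $R$, forms a closed walk of tree vertices. We must check this walk contains a genuine cycle: $D[w,s_2]$ avoids $G_1$ except at $w$, and $Q_2$ before $x$ avoids $G_1$. This contradicts Lemma~\ref{lmm:cycle}. The alternative is a direct argument: in a galled network the only way to reach $G_2$ from $s_1$ through a tree-vertex route must enter $G_2$ at its top. Checking the disjointness details (in particular that $D[w,s_2]$ and $Q_2$ meet only at $s_2$, which follows from acyclicity since $Q_2$ lies below $s_2$ and $D$ lies above it) is the main technical obstacle.

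Once $s_2\in V(G_1)\cap V(G_2)=V(P)$, every vertex of $P\subseteq G_2$ is a descendant of $s_2$. So $s_2$ is an ancestor of all of $P$, and by uniqueness in Lemma~\ref{lem:treepath-top}, $s_2=u$.

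For the dichotomy, if $s_1=s_2$ then both edges of $G_1$ at $s_1$ are outgoing, and likewise for $G_2$. Since $s_1$ has only two children and $G_1\ne G_2$, $P$ must contain both children; otherwise $P$ would end at $s_1$, and the two cycles would share an outgoing edge at $s_1$ but diverge later, which is only possible if $s_1$ is not an endpoint. More carefully: $P$ contains at least one edge at $s_1$ by the degree argument in Lemma~\ref{lmm:overlapone}. If $P$ ended at $s_1$, then the third edge at $s_1$ in $G_1$ and in $G_2$ would coincide, and that edge would belong to $P$, a contradiction. Hence $s_1$ is internal to $P$, so it is primitive, and $P$ splits at its highest vertex into two directed paths by Lemma~\ref{lem:treepath-top}. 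If $s_2<s_1$, then $s_2$ is not the top of $G_1$, so it has exactly one incoming edge, from its parent, in $G_1$. That edge is not in $G_2$, since it enters the top $s_2$. Hence $s_2$ is an endpoint of $P$, and by Lemma~\ref{lem:treepath-top} $P$ is a directed path starting at $s_2$.
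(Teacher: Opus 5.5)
Your proof is correct and follows essentially the same route as the paper's. The cut-edge argument makes the directed path from $s_1$ to $s_2$ reticulation-free. Its final segment off $G_1$, the directed path in $G_2$ from $s_2$ down to $P$, and the arc of $G_1$ avoiding its reticulation vertex then give a cycle of tree vertices, and the dichotomy comes from the same count of edges of $G_1$ and $G_2$ at $s_2$.

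Taking $x$ to be the first vertex of $Q_2$ on $G_1$ is a slightly tidier way than the paper's to ensure the closed walk is a genuine cycle. You can delete the unproved ``alternative'' aside and the garbled first attempt at the case $s_1=s_2$, since your ``more carefully'' version already handles that case.
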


\begin{proof}
Let $r_1$ and $r_2$ be the reticulation vertices of $G_1$ and $G_2$, respectively. First suppose that $s=s_1=s_2$. Since $s\in V(G_1)\cap V(G_2)$, we have $s\in V(P)$. As $s$ is the top vertex of both $G_1$ and $G_2$, it is also the highest vertex of $P$. We claim that $s$ is an internal vertex on $P$. Suppose that $s=t_1$. Then the common path $P$ uses one outgoing edge of $s$, while each of $G_1$ and $G_2$ has another outgoing edge from $s$ outside $P$. These three outgoing edges are distinct since $s=t_1$. This contradicts the fact that $s$ is a tree vertex. Hence $s$ is an internal vertex on $P$, so $s$ is primitive, and $P[s,t_1]$ and $P[s,t_2]$ are directed paths in $N$ by Lemma~\ref{lem:treepath-top}.

Now suppose that $s_2<s_1$. We first show that $s_2\in V(G_1)$. Suppose not. Let $Q$ be the directed path from $s_1$ to $s_2$, and let $u$ be the last vertex of $Q$ lying in $V(G_1)$. Then the subpath $Q[u,s_2]$ is internally disjoint from $G_1$ and contains no reticulation vertex; otherwise, an argument similar to the one in the proof of Lemma~\ref{lmm:twotopcompa} yields a contradiction to $G_1 \cap G_2 \neq \emptyset$.

Let $w$ be the highest vertex of $P$. Since $P\subseteq G_2$ and $s_2\notin V(G_1)$ by assumption, while $w\in V(G_1)$, we have $w<s_2<u$. Let $P_1$ be the subpath of $G_1$ between $u$ and $w$ that avoids $r_1$, and let $P_2$ be the directed subpath of $G_2$ from $s_2$ to $w$. Considering these paths in the underlying graph, $C:=P_1\cup P_2\cup Q[u,s_2]$ is a cycle consisting solely of tree vertices, contradicting Lemma~\ref{lmm:cycle}.

If $s_2$ were an internal vertex of $P$, then the two outgoing edges of $s_2$ in $G_2$ would both lie in $G_1$, forcing $s_2=s_1$, a contradiction. Hence $s_2$ is an endpoint of $P$ and the path $P$ is directed from $s_2$ by Lemma~\ref{lem:treepath-top}.
\end{proof}

\section{Planar Galled Networks} \label{sec:upward}

In this section, we prove Theorem~\ref{upward:thm}, which states that planarity and upward planarity are equivalent for galled networks. This equivalence was claimed without proof by Tollis and Kakoulis~\cite{TollisKakoulis2020}. It stands in contrast to the case of general phylogenetic networks, where the upward planar networks form a strict subset of the planar ones~\cite{MoultonWu2022}.

Given a planar drawing $\Gamma$ of a phylogenetic network $N$ and a path $P=(v_0,v_1,\dots,v_n)$ with $n\ge 1$, we write $\Gamma^\circ(P)=\Gamma(P)\setminus\{\Gamma(v_0),\Gamma(v_n)\}$ for its \emph{open arc}; in particular, for an edge $e=(u,v)$ this yields $\Gamma^\circ(e)=\Gamma^\circ(u,v)=\Gamma(e)\setminus\{\Gamma(u),\Gamma(v)\}$.
Then following Theorem~\ref{Phylogenetic Thomassen} on characterisation of upward planarity for phylogenetic networks is a special version of the well-known theorem by Thomassen~\cite[Theorem~5.1]{Thomassen1989} on single-source digraphs.

\begin{theorem}\label{Phylogenetic Thomassen}
Let $\Gamma$ be a planar drawing of a phylogenetic network $N$ with root $\rho$. Then there exists an upward planar drawing that is strongly equivalent to $\Gamma$ if and only if  $\Gamma(\rho)$ is contained on the boundary of the outer face of $\Gamma$, and every cycle $C$ in $N$ contains a reticulation vertex $r$ in $N$ with parents $p_1,p_2$ and child $v$ such that $\Gamma(p_1)$ and $\Gamma(p_2)$ are contained on the closed Jordan curve $\Gamma(C)$ while $\Gamma^\circ(r,v)$ is contained in $\mathrm{ext}(C)$.
Furthermore, if $N$ is a galled network, the node $\Gamma(v)$ is strictly contained in $\mathrm{ext}(C)$.
\end{theorem}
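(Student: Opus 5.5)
The equivalence is essentially Thomassen's characterisation of upward planar drawings of single-source digraphs \cite[Theorem~5.1]{Thomassen1989}, specialised to phylogenetic networks. The plan is to verify that the hypotheses and conclusions translate, and then to prove the additional statement for galled networks separately.

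\emph{Main equivalence.} Thomassen's theorem says the following. Let $D$ be a digraph with a single source $s$ and a planar drawing $\Gamma$. Then $D$ has an upward planar drawing strongly equivalent to $\Gamma$ if and only if $s$ lies on the outer face and, for every cycle $C$, there is a vertex $x$ on $C$ which is a sink of $C$ (both incident $C$-edges point into $x$) such that some edge leaving $x$ has its interior in $\mathrm{ext}(C)$ (``$C$ has a sink with an outgoing edge to the exterior''). A phylogenetic network $N$ is acyclic and has the unique source $\rho$, so the theorem applies.

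\begin{enumerate}
\item Let $x$ be a sink of $C$ with an outgoing edge. Then $x$ has in-degree at least $2$ and out-degree at least $1$, so it is a reticulation vertex $r$. Its two in-edges on $C$ come from its parents $p_1,p_2$, so $\Gamma(p_1),\Gamma(p_2)\in\Gamma(C)$.
\item The unique out-edge $(r,v)$ is the edge whose open arc must lie in $\mathrm{ext}(C)$. Here $v\notin C$, since both $C$-edges at $r$ are in-edges.
\item Conversely, the stated condition on $r$ provides exactly such a sink.
\end{enumerate}
One must check that Thomassen's formulation, phrased via faces or ``strongly equivalent'' embeddings, matches the definition used here. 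I expect this translation to be routine bookkeeping.

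\emph{Galled refinement.} Assume $N$ is galled and $\Gamma^\circ(r,v)\subseteq \mathrm{ext}(C)$. We must show $\Gamma(v)\notin\Gamma(C)$, that is, $v\notin V(C)$. By Lemma~\ref{lmm:retcutvertex}, $\{r,v\}$ is a cut edge of $\overline N$, so it lies on no cycle. Suppose $v\in V(C)$. Then $v\neq r$, and $C$ contains a path from $r$ to $v$ avoiding the edge $\{r,v\}$. Adding the edge closes a cycle through $\{r,v\}$, which is a contradiction. Hence $\Gamma(v)\notin\Gamma(C)$. Since $\Gamma(v)$ lies in the closure of the connected set $\Gamma^\circ(r,v)\subseteq\mathrm{ext}(C)$, it lies in $\mathrm{Ext}(C)$, and therefore in $\mathrm{ext}(C)$.

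The main obstacle is the faithful invocation of Thomassen's theorem. In particular, Thomassen works with the drawing up to strong equivalence and treats cycles in the underlying graph. I would carefully restate his condition and map his ``sink-switch'' vertex to a reticulation vertex, using only acyclicity and the degree constraints of phylogenetic networks.
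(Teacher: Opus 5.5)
Your proposal is correct and follows essentially the paper's route. You invoke Thomassen's Theorem~5.1 for the single source $\rho$, identify the sink of $C$ as a reticulation vertex from the degree constraints, and in the galled case use Lemma~\ref{lmm:retcutvertex} to rule out $v\in V(C)$ before passing from $\mathrm{Ext}(C)$ to $\mathrm{ext}(C)$. The only difference is that you obtain $v\notin V(C)$ directly from the definition of a cut edge, whereas the paper goes through $2$-connectivity of $C\cup\{r,v\}$ and Lemma~\ref{lmm:deg3-tree}.

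One small wording fix: in step~2, what follows from both $C$-edges at $r$ being in-edges is $(r,v)\notin E(C)$, not ``$v\notin C$''. The latter is precisely what requires the galled hypothesis.
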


\begin{proof}

$(\Rightarrow)$ Suppose there exists an upward planar drawing of $N$ strongly equivalent to $\Gamma$, and let $C$ be a cycle of $N$. The root $\rho$ is the single source of $N$, so Thomassen~\cite[Thm.~5.1]{Thomassen1989} implies that $\Gamma(\rho)$ lies on the boundary of the outer face of $\Gamma$, and that $C$ has a vertex $r$ whose two neighbours on $C$ are both parents of $r$ and whose outgoing edges $(r,w)$ all satisfy $\Gamma^\circ(r,w)\not\subseteq\mathrm{Int}(C)$. Hence $r$ has indegree~2 and is a reticulation vertex, with parents $p_1,p_2\in V(C)$ and $\Gamma(p_1),\Gamma(p_2)\in\Gamma(C)$. Denote the unique child of $r$ by $v$ and we have $(r,v) \notin E(C)$. By planarity, $\Gamma^\circ(r,v)$ is disjoint from $\Gamma(C)$ and lies in a single face of $\mathbb{R}^2\setminus\Gamma(C)$. Since $\Gamma^\circ(r,v)\not\subseteq\mathrm{Int}(C)$, the face belongs to $\mathrm{ext}(C)$, i.e.\ $\Gamma^\circ(r,v)\subseteq\mathrm{ext}(C)$, as required.

If $N$ is a galled network, then $\{r,v\}$ is a cut edge of $\overline{N}$ by Lemma~\ref{lmm:retcutvertex}. If $v\in V(C)$, then $\{r,v\}\cup C$ is a $2$-connected graph, so by Lemma~\ref{lmm:deg3-tree} the vertex $r$ is a tree vertex, contradicting that $r$ is a reticulation. Hence $v\notin V(C)$ and $\Gamma(v)\notin\Gamma(C)$. As $\Gamma(v)$ is an endpoint of $\Gamma(r,v)$ with $\Gamma^\circ(r,v)\subseteq\mathrm{ext}(C)$, we have $\Gamma(v)\in\mathrm{Ext}(C)$; since moreover $\Gamma(v)\notin\Gamma(C)$, it follows that $\Gamma(v)\in\mathrm{ext}(C)$.

$(\Leftarrow)$ Conversely, let $C$ be a cycle of $N$ and let $r$ be the reticulation vertex provided by the hypothesis, with unique child $v$. Note that both of $r$'s neighbours on $C$ are its parents, and that the edge $(r, v)$ is its only outgoing edge.  By hypothesis $\Gamma^\circ(r,v)\subseteq\mathrm{ext}(C)$, and since $\mathrm{ext}(C)\cap\mathrm{Int}(C)=\emptyset$, it satisfies $\Gamma^\circ(r,v)\not\subseteq\mathrm{Int}(C)$. Thus $r$ meets both conditions of Thomassen~\cite[Thm.~5.1]{Thomassen1989}. As this holds for every cycle $C$ of $N$ and $\Gamma(\rho)$ lies on the boundary of the outer face, that theorem yields an upward planar drawing strongly equivalent to $\Gamma$.
\end{proof}

A local modification of a planar drawing relies on the concept of bridges~\cite[Section 9.4]{BoMu}. Note that this terminology is distinct from cut edges.

Given a cycle $C$ of $N$, a \emph{bridge} of cycle $C$ (or simply a bridge when $C$ is clear from context) is a maximal connected subgraph $B$ of $N$ such that no edge of $B$ belongs to $C$, and any two vertices of $B$ are connected by a path that is internally disjoint from $C$. The \emph{attachment vertices} of $B$ are the vertices in $V(B) \cap V(C)$. A bridge with $k$ vertices of attachment is a \emph{$k$-bridge}; for $k\ge 2$, these vertices partition $C$ into edge-disjoint paths called \emph{segments}.
Two bridges \emph{avoid} each other if all the attachment vertices of one bridge lie within a single segment partitioned by the attachment vertices of the other. Otherwise, they \emph{overlap}.

For a fixed planar drawing $\Gamma$, a bridge $B$ of $C$ is called an \emph{inner bridge} (resp. \emph{outer bridge}) in $\Gamma$ if $\Gamma(B)\setminus \Gamma(C)$ is contained in $\mathrm{int}(C)$ (resp. $\mathrm{ext}(C)$). When $\Gamma$ and $C$ are clear from the context, we simply call $B$ an inner (resp.\ outer) bridge. An inner bridge $B$ is \emph{transferable} if $B$ can be redrawn so that $\Gamma'(B)\setminus\Gamma(C)\subseteq \mathrm{ext}(C)$, while keeping the rest of the drawing fixed and preserving planarity. Bondy and Murty~\cite[Thm.~9.9]{BoMu} showed that an inner bridge avoiding every outer bridge is transferable. We obtain the following consequence

\begin{corollary}\label{cor:inner1bridge}
Every inner $1$-bridge is transferable.
\end{corollary}

\begin{proof}
A $1$-bridge has a single attachment vertex, which  lies in a single segment of any other bridge (or there is no other bridge). Hence it avoids every bridge, and is transferable.
\end{proof}

Figure~\ref{fig:bridge}(a) shows a planar drawing $\Gamma$ in which the cycle $C$ 
has five bridges: an inner $2$-bridge $B_1$ (thick black arcs inside $\Gamma(C)$), an outer $2$-bridge $B_2$ (dashed arc outside), an inner $4$-bridge $B_3$ (thin dashed arcs inside), an outer $2$-bridge $B_4$ (thick dashed arc outside), and an inner $1$-bridge $B_5$ (gray arcs inside). Here $B_1$, $B_2$, and $B_5$ avoid all other bridges, whereas $B_3$ and $B_4$ overlap. In particular, $B_1$ and $B_5$ avoid every outer bridge, so by Bondy and Murty~\cite[Thm.~9.9]{BoMu} both are transferable, while the inner $4$-bridge $B_3$ is not. Figure~\ref{fig:bridge}(b) shows the resulting planar drawing $\Gamma'$, in which $\Gamma'(B_1)$ and $\Gamma'(B_5)$ lie outside $\Gamma(C)$ while the rest of the drawing is unchanged.

\begin{figure}[htbp]
  \centering
  \includegraphics[width=0.96\textwidth]{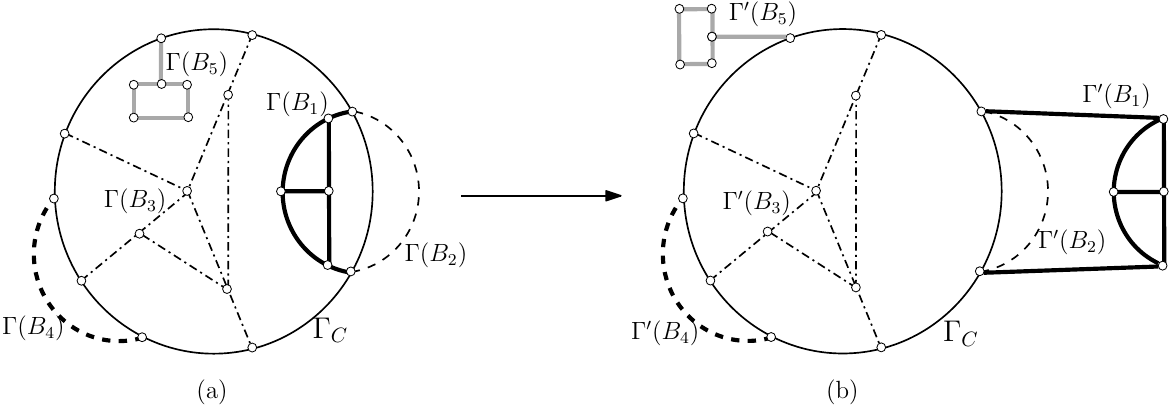}
  \caption{Bridges of a cycle $C$ and the transfer of inner bridges.
(a) A planar drawing $\Gamma$, with $\Gamma(C)$ shown as the large black circle.
(b) The drawing $\Gamma'$ obtained by transferring inner bridges $B_1$ and $B_5$ in $\Gamma$.}
  \label{fig:bridge}
\end{figure}

Fix a planar drawing $\Gamma$ of a graph $N$ and a cycle $C$ of $N$. Since the closed Jordan curve $\Gamma(C)$ partitions the plane, every bridge of $C$ is either inner or outer in $\Gamma$.
If $N$ is a galled network, a cycle $C$ of $N$, including a galled cycle, is called \emph{inward in $\Gamma$} if, for every reticulation vertex $r_i\in V(C)$, the unique child $v_i$ of $r_i$ satisfies $\Gamma(v_i)\in\mathrm{int}(C)$.
Furthermore, an inward cycle $C$ is called \emph{reducible in $\Gamma$} if there exists a reticulation vertex $r\in V(C)$ with galled cycle $G_r$ such that $G_r$ is inward in $\Gamma$; otherwise, $C$ is called \emph{irreducible in $\Gamma$}. In all these notions, we omit ``in $\Gamma$'' when $\Gamma$ is clear from the context.

\begin{lemma} \label{pinkcycleandgall}
Let $\Gamma$ be a planar drawing of a galled network $N$. Then every inward cycle in $\Gamma$ is reducible in $\Gamma$.
\end{lemma}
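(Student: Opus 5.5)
We argue by induction on the number of faces of $\Gamma$ lying in $\mathrm{int}(C)$. Let $C$ be an inward cycle. By Lemma~\ref{lmm:cycle}, $C$ contains a reticulation vertex.

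\textbf{Setup at a reticulation $r$ on $C$.} Fix a reticulation vertex $r$ on $C$, with parents $p_1,p_2$, child $v$, and galled cycle $G_r$. By Lemma~\ref{lmm:retcutvertex}, the edge $\{r,v\}$ is a cut edge, so it lies on no cycle. Hence both incoming edges $\{p_1,r\},\{p_2,r\}$ lie on $C$ and on $G_r$, and $v\notin V(C)\cup V(G_r)$; the argument for the latter is the one in the proof of Theorem~\ref{Phylogenetic Thomassen}, which uses Lemma~\ref{lmm:deg3-tree}. Since $C$ is inward, $\Gamma(v)\in\mathrm{int}(C)$. Because $\Gamma^\circ(r,v)$ avoids $\Gamma(C)$, the arc of $\{r,v\}$ leaves $\Gamma(r)$ into the local wedge between the two $C$-edges at $r$ that faces $\mathrm{int}(C)$.

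Decompose $G_r$ relative to $C$. Either $G_r=C$, and we are done, or $G_r$ contains maximal subpaths $Q$ whose endpoints lie on $C$ and whose internal vertices and edges lie off $C$. Each such $Q$ avoids $r$ and is therefore a tree path. Each $\Gamma^\circ(Q)$ lies in $\mathrm{int}(C)$ or in $\mathrm{ext}(C)$.

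\textbf{Case 1: some $Q$ (for some reticulation $r$ on $C$) lies inside.} The path $Q$ is then a chord of $\mathrm{int}(C)$. By Lemma~\ref{lmm:diestel-412}, it splits $C$ into two cycles $C_1,C_2$ with $\mathrm{int}(C_i)\subsetneq \mathrm{int}(C)$, so each encloses strictly fewer faces. Since $Q$ is a tree path, by Lemma~\ref{lmm:cycle} the cycle $C_i$ that is a cycle must contain a reticulation, and every reticulation of $C_i$ already lies on $C$. I claim at least one $C_i$ is inward.

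Take any reticulation $r'$ on $C_i$. Both of its incoming edges lie on $C_i$ (they are on $C$ and not on $Q$). Its child $v'$ is not on $Q$, since otherwise $Q\cup C$ together with $\{r',v'\}$ would put the cut edge on a cycle. The wedge at $r'$ facing $\mathrm{int}(C)$ is locally contained in $\mathrm{int}(C_i)$, so $\Gamma(v')\in\mathrm{int}(C_i)$. Thus $C_i$ is inward, and by induction it is reducible. This yields a reticulation $r''\in V(C_i)\subseteq V(C)$ with $G_{r''}$ inward, so $C$ is reducible.

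\textbf{Case 2: for the chosen $r$, every $Q$ lies outside.} I will show $G_r$ itself is inward. Its only reticulation is $r$, so this means showing $\Gamma(v)\in\mathrm{int}(G_r)$. Work mod $2$: $G_r$ is obtained from $C$ by replacing, for each outer detour $Q$, a segment $S_Q$ of $C$ by $Q$. The edge set of $G_r$ is then the symmetric difference of $C$ with the cycles $Q\cup S_Q$.

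Each $\Gamma(Q\cup S_Q)$ bounds a region contained in $\mathrm{Ext}(C)$, because $Q$ is an outer arc. A point $x$ of $\mathrm{int}(C)$ near $\Gamma(r)$ on the arc toward $\Gamma(v)$ therefore has even crossing parity with each $Q\cup S_Q$. So $x$ has the same inside/outside parity for $G_r$ as for $C$, giving $x\in\mathrm{int}(G_r)$. Since $\Gamma^\circ(r,v)$ avoids $\Gamma(G_r)$, it follows that $\Gamma(v)\in\mathrm{int}(G_r)$. Hence $G_r$ is inward and $C$ is reducible.

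The base case, $C$ enclosing a single face, falls under Case 2 automatically, since no inner chord exists. The main obstacle is the topological bookkeeping. In Case 1, the only nontrivial point is identifying the correct local wedge at each reticulation, i.e.\ that the wedge facing $\mathrm{int}(C)$ lies in $\mathrm{int}(C_i)$. In Case 2, the key step is the parity (Jordan-curve/crossing-number) argument.
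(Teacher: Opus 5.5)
Your Case~1 is correct (both $C_1$ and $C_2$ are indeed inward), and it matches the paper's Case~2. Case~2, however, rests on a false claim, and your base case inherits the error.

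\textbf{The gap.} When every detour $Q$ of $G_r$ lies in $\mathrm{ext}(C)$, the curve $\Gamma(G_r)$ avoids $\mathrm{int}(C)$. So either $\mathrm{int}(C)\subseteq\mathrm{int}(G_r)$ or $\mathrm{int}(C)\cap\mathrm{int}(G_r)=\emptyset$. You implicitly assume the first, but the second can occur.

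Take the galled network with edges $(\rho,s)$, $(s,a)$, $(s,b)$, $(a,r)$, $(b,r)$, $(a,r_2)$, $(b,r_2)$, $(r,\ell_1)$, $(r_2,\ell_2)$. Then $G_r=(s,a,r,b)$ and $G_{r_2}=(s,a,r_2,b)$. Draw $C=(a,r,b,r_2)$ as a diamond with $r$ on top, put $\ell_1,\ell_2$ inside it, and put $s,\rho$ above $r$.

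\begin{itemize}
\item $C$ is inward.
\item $\mathrm{int}(C)$ is a single face, since pendant edges do not split it, so this is an instance of your base case.
\item The only detour of $G_r$ is $Q=(a,s,b)$, drawn outside $C$.
\item Yet $\mathrm{int}(G_r)$ is the lune between $\Gamma(Q)$ and $\Gamma(a,r,b)$, which lies in $\mathrm{ext}(C)$. Hence $\Gamma(\ell_1)\notin\mathrm{int}(G_r)$, and $G_r$ is not inward.
\end{itemize}

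The lemma still holds here via $G_{r_2}$, but not via the $r$ you fixed.

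Your parity step fails exactly at this point. The endpoints of an outer detour $Q$ cut $C$ into two segments, and only one of them bounds, together with $Q$, a bounded region of $\mathrm{ext}(C)$. The segment $S_Q$ that $G_r$ actually replaces need not be that one. In the example, $S_Q=(a,r_2,b)$ and $\mathrm{int}(Q\cup S_Q)=\mathrm{int}(G_{r_2})\supseteq\mathrm{int}(C)$. So points of $\mathrm{int}(C)$ have odd parity with respect to $Q\cup S_Q$.

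\textbf{The fix.} What is missing is a way to exploit an outer detour, and this forces a different induction parameter. Let $Q$ be an outer detour of $G_r$ whose endpoints cut $C$ into $S_1,S_2$. Let $C^*$ be whichever of $Q\cup S_1$, $Q\cup S_2$ bounds the unbounded region in Lemma~\ref{lmm:diestel-412}. Then:

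\begin{itemize}
\item $\mathrm{int}(C)\subsetneq\mathrm{int}(C^*)$.
\item Every reticulation of $C^*$ lies on $C$, and its child lies in $\mathrm{int}(C)\subseteq\mathrm{int}(C^*)$, so $C^*$ is inward.
\item But $C^*$ encloses more faces than $C$, so your induction cannot be applied to it.
\end{itemize}

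Induct instead on $\sharp(C)$. Since $Q$ is a tree path, both $S_1$ and $S_2$ contain a reticulation, so $\sharp(C^*)<\sharp(C)$. The same reasoning gives $\sharp(C_i)<\sharp(C)$ in your Case~1.

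With this change, the outer case is handled by passing to $C^*$, and no parity argument is needed. Your proof then becomes the paper's: a minimal counterexample with respect to the number of reticulations, split according to whether the detour $P'$ of $G_r$ lies outside or inside $C$.

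\textbf{A minor slip.} The inclusion $V(C_i)\subseteq V(C)$ is false; only the reticulations of $C_i$ are guaranteed to lie on $C$.
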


\begin{proof}
Suppose for contradiction that there exists an irreducible cycle $C$ in $\Gamma$. Let $W(C)$ denote the closed walk of $N$ whose drawing is the closed Jordan curve $\Gamma(C)$. Among all irreducible cycles, choose $C$ so that $W=W(C)$ has the minimum number of reticulation vertices.

Choose an arbitrary reticulation vertex $r$ in $V(W)$, and denote its galled cycle by $G_r$. Since $C$ is irreducible in $\Gamma$, $G_r$ is not inward in $\Gamma$, and hence $C\neq G_r$. Let the parents of $r$ be $p_1$ and $p_2$. As in the proof of Theorem~\ref{Phylogenetic Thomassen}, both $C$ and $G_r$ contain $\{p_1,r\}$ and $\{p_2,r\}$.

Write $W$ as $W=(v_1,v_2,\dots,v_m,v_1)$. Reindexing the label of vertices if necessary, let $P_0=(v_1,v_2,\dots,v_i)$ be the maximal subpath of $W$ containing $r$ whose vertices and edges are contained in $G_r$ such that $r \in V(P_0)$ and $P_0 \subseteq W\cap G_r$. Furthermore, $v_m \notin V(G_r)$, and let $u_1$ be the neighbour of $v_1$ in $G_r$ other than $v_2$; then $u_1 \notin V(W)$.  Similarly, $v_{i+1} \in V(W)$ but $v_{i+1} \notin V(G_r)$, while the neighbour of $v_i$ in $G_r$ other than $v_{i-1}$ does not appear in $W$.

Let $P'=(v_1,u_1,\dots,u_k)$ be the subpath of $G_r$ that is internally disjoint from $W$. That is, $v_1$ and $u_k$ are the only vertices of $V(P') \cap V(W)$. In particular, there exists an index $j$ with $i\le j\le m$ such that $v_j=u_k$.
Let $P_1=(v_1,v_2,\dots,v_j)$ be the subpath of $W$ between $v_1$ and $v_j$. Since $j\ge i$, $P_0$ is a subpath of $P_1$ (they are equal when $i=j$), and in particular $r\in V(P_1)$.
Let $P_2=(v_j,v_{j+1},\dots,v_m,v_1)$ be the complement subpath of $P_1$ in $W$. That is, $P_1$ and $P_2$ are internally disjoint, and their union is the cycle $C$, see Figure~\ref{reducible} as an example.

\begin{figure}[ht] 
    \centering
    \includegraphics[scale=0.5]{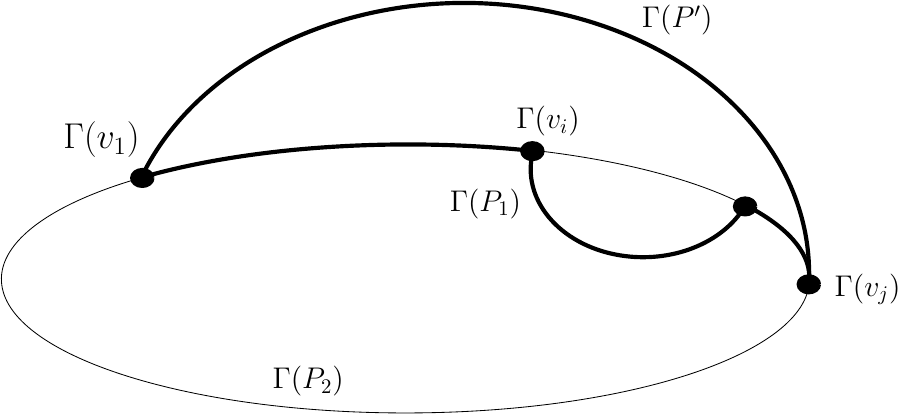}
\caption{A possible position of $\Gamma(P')$ and the closed Jordan curve $\Gamma(C) = \Gamma(P_1) \cup \Gamma(P_2)$. The bold closed Jordan curve is $\Gamma(G_r)$.}
\label{reducible}
\end{figure} 
Now consider the cycle $C_1$ (resp. $C_2$) formed by the union of the paths $P'$ and $P_1$ (resp. $P'$ and $P_2$). As $P'$ is a subpath of $G_r$ and $r \notin V(P')$, $P'$ is a tree path. Hence we have $\nret{P_1} \ge1 $ and $\nret{P_2} \ge1 $ otherwise $C_1$ or $C_2$ would be a cycle containing only tree vertices, contradicting Lemma~\ref{lmm:cycle}. Furthermore, for a reticulation vertex $r_t$ on $P_1$ or $P_2$ with child $w_t$ and galled cycle $G_t$, $G_t$ is not inward in $\Gamma$. Note that $v_1$ and $u_k=v_j$ are tree vertices by Lemma~\ref{lmm:deg3-tree} as they have degree~3 in $C \cup P'$; hence each $r_t$ is an internal vertex on $P_1$ or $P_2$.

By Lemma~\ref{lmm:diestel-412}, the Jordan curves $\Gamma(P_1)$, $\Gamma(P_2)$, and $\Gamma(P')$ partition the plane into three regions bounded by $\Gamma(C)=\Gamma(P_1)\cup\Gamma(P_2)$, $\Gamma(C_1)=\Gamma(P_1)\cup\Gamma(P')$, and $\Gamma(C_2)=\Gamma(P_2)\cup\Gamma(P')$. We have the following two cases regarding the position of $\Gamma(P')$, see Figure~\ref{fig:pink_cases}:

\begin{figure}[ht] 
    \centering
    \includegraphics[scale=0.7]{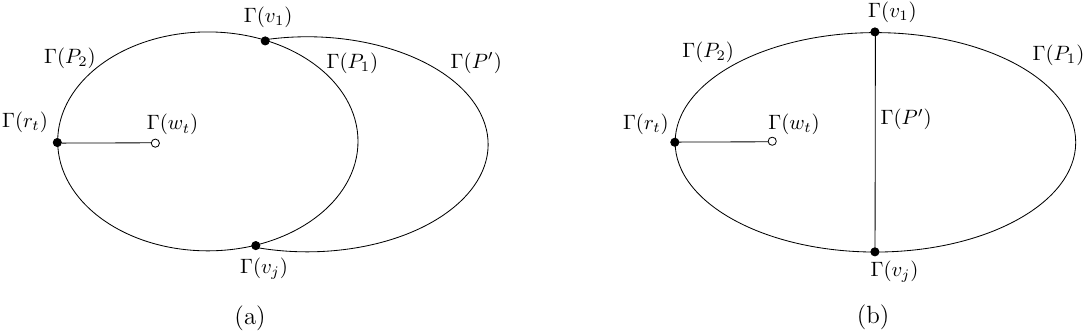}
\caption{(a) Case~1: $\Gamma^\circ(P')\subseteq\mathrm{ext}(C)$. (b) Case~2: $\Gamma^\circ(P')\subseteq\mathrm{int}(C)$. Here $r_t$ is a reticulation vertex on $P_1$ or $P_2$ with child $w_t$.}

\label{fig:pink_cases}
\end{figure}

\noindent\textit{Case 1: $\Gamma^\circ(P')\subseteq\mathrm{ext}(C)$.}

Since $\Gamma^\circ(P')\subseteq\mathrm{ext}(C)$, the region bounded by $\Gamma(C)$ is $\mathrm{int}(C)$,
while the other two regions are contained in $\mathrm{ext}(C)$, exactly one of them being unbounded. Let $C^\ast$ be the cycle in $\{C_1,C_2\}$ for which $\Gamma(C^\ast)$ is the boundary of the unbounded region. Since $P'$ is a tree path, every reticulation vertex on $C^*$ lies on $C$. Note that  $\mathrm{ext}(C^\ast)$ is disjoint from $\mathrm{int}(C)$, so  $\mathrm{int}(C)\subsetneq\mathrm{int}(C^\ast)$. Hence, as $C$ is inward, $C^\ast$ is inward. As $C$ is irreducible, the galled cycle of each reticulation vertex on $C$ is not inward; hence $C^\ast$ is irreducible. Finally, since both $P_1$ and $P_2$ contain a reticulation vertex, regardless of the choice of $C^\ast$, the part of $C$ not used by $C^\ast$ contains at least one reticulation vertex. Therefore $\nret{C^\ast} < \nret{C}$, contradicting the minimality of $C$. 

\smallskip

\noindent\textit{Case 2: $\Gamma^\circ(P')\subseteq\mathrm{int}(C)$.}

For a reticulation vertex $r_t$ on $P_2$, we claim that the drawing of its child $\Gamma(w_t)$ is contained in $\mathrm{int}(C_2)$. As in the proof of Theorem~\ref{Phylogenetic Thomassen}, we have
$w_t \notin V(C) \cup V(P')$. Suppose $\Gamma(w_t)\in \mathrm{int}(C_1)$; this contradicts the Jordan Curve Theorem, since $\Gamma(r_t)\in \mathrm{ext}(C_1)$ while $\Gamma(w_t)\in \mathrm{int}(C_1)$.  
Thus, every reticulation vertex $r_t$ on $C_2$ satisfies $\Gamma(w_t)\in\mathrm{int}(C_2)$, so $C_2$ is inward. Moreover, since none of the $G_t$ is inward, the cycle $C_2$ is irreducible. Since $C_2$ does not contain the reticulation vertex $r\in P_1$, we have $\nret{C_2} < \nret{C}$. This contradicts the minimality of $C$.
\end{proof}

Now we have the following characterisation for upward planar galled networks.

\begin{theorem}
\label{upward:thm}
A galled network is planar if and only if it is upward planar.
\end{theorem}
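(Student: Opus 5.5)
The reverse implication is immediate, since an upward planar drawing is in particular planar. For the forward implication, start from an arbitrary planar drawing $\Gamma$ of a galled network $N$. Modify $\Gamma$ by a finite sequence of local redrawings until it satisfies the hypotheses of Theorem~\ref{Phylogenetic Thomassen}. That theorem then produces an upward planar drawing strongly equivalent to the modified one.

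The first step is to place the root on the outer face. The root $\rho$ has degree $1$, and its unique edge is a cut edge. The standard device is to re-embed so that a face incident with $\Gamma(\rho)$ becomes the outer face, for example by inversion of the plane through a point of that face. Planarity is preserved, and the cycle conditions are then checked in the new drawing. Since re-choosing the outer face can change which cycles are inward, I will perform this step first and keep $\rho$ on the outer face in every later modification. This is safe because bridge transfers across a cycle $C$ only move parts lying inside or outside $\Gamma(C)$, and a $1$-bridge moved to $\mathrm{ext}(C)$ can be placed in the face of $\mathrm{ext}(C)$ adjacent to its attachment, so it stays away from the unbounded face's boundary structure containing $\rho$.

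The second step reduces to galled cycles. By Lemma~\ref{lmm:retcutvertex}, every reticulation vertex $r$ of a cycle $C$ has both parents on $C$ and its child $v\notin V(C)$. So the condition of Theorem~\ref{Phylogenetic Thomassen} fails for $C$ exactly when $C$ is inward, meaning every child lies in $\mathrm{int}(C)$. By Lemma~\ref{pinkcycleandgall}, any inward cycle is reducible, so some galled cycle $G_r$ is inward. Hence it suffices to produce a planar drawing, with $\rho$ on the outer face, in which no galled cycle is inward. For a galled cycle $G_r$, inwardness just means $\Gamma(v)\in\mathrm{int}(G_r)$, where $v$ is the child of $r$.

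The third step fixes one inward galled cycle $G_r$ at a time. The edge $\{r,v\}$ is a cut edge, so the component of $\overline N-\{r,v\}$ containing $v$, together with the edge $\{r,v\}$, forms a $1$-bridge $B$ of $G_r$ with attachment vertex $r$. Every vertex of this component is a descendant of $r$ or otherwise separated from $G_r$ by the cut edge. By Corollary~\ref{cor:inner1bridge}, $B$ is transferable, so redraw it in $\mathrm{ext}(G_r)$ near $\Gamma(r)$. After this, $G_r$ is no longer inward. The main obstacle is showing the process terminates, since a transfer might make some other galled cycle $G_{r'}$ inward. My plan is to transfer the maximal piece hanging below the cut edge $\{r,v\}$, which is the whole side of $\overline N$ not containing $G_r$. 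Then, for any other galled cycle $G_{r'}$, either $G_{r'}$ lies entirely in $B$ and moves rigidly, keeping its status, or $G_{r'}$ lies on the other side and the relative position of $G_{r'}$ and its child's side is unchanged. Only $r$'s own hanging subgraph changes position relative to the curves $\Gamma(G_{r'})$, and it is placed in the face at $r$ outside $G_r$. Cycles $G_{r'}$ through $r$'s neighbourhood need a check that the new location near $r$ lies on the correct side; if it does not, I will handle this by processing galled cycles in order of decreasing top vertex. The measure is the number of inward galled cycles, and I need to show it strictly decreases with each transfer. Once no galled cycle is inward and $\rho$ is on the outer face, Theorem~\ref{Phylogenetic Thomassen} gives the upward planar drawing.
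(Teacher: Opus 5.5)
Your proposal follows the paper's proof essentially step for step. You place $\Gamma(\rho)$ on the outer face, use Lemma~\ref{pinkcycleandgall} to reduce the hypothesis of Theorem~\ref{Phylogenetic Thomassen} to the absence of inward galled cycles, and repeatedly transfer the inner $1$-bridge $B$ hanging from the cut edge $\{r,v\}$. The induction is on the number of inward galled cycles, exactly as the paper does with $Q$.

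The termination step you leave open is not actually an obstacle, and the fallback of ordering by top vertex is unnecessary. Every galled cycle contains exactly one reticulation vertex, and each reticulation lies in a unique galled cycle. So no galled cycle $G_{r'}\neq G_r$ contains $r$. Its child $w'$ is joined to $r'$ by a cut edge different from $\{r,v\}$. Hence $G_{r'}$ and $w'$ lie together either entirely in $B-\{r\}$ or entirely outside it.

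Inwardness of $G_{r'}$ depends only on whether $\Gamma(w')\in\mathrm{int}(G_{r'})$. It does not depend on where the transferred bridge lands relative to $\Gamma(G_{r'})$. So galled cycles through the parents of $r$ need no check: both the cycle and its child are left fixed.

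Your dichotomy therefore already gives a strict decrease, with one proviso. The transfer must place a homeomorphic (e.g.\ strongly equivalent) copy of $\Gamma(B-\{r\})$ in the face at $\Gamma(r)$ outside $G_r$, as the paper does. An arbitrary redrawing supplied by transferability could change the inwardness of galled cycles inside $B$.
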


\begin{proof}
The implication from upward planarity to planarity is immediate. Conversely, let $\Gamma$ be a planar drawing of a galled network $N$, chosen so that $\Gamma(\rho)$ lies on the boundary of the outer face. For each reticulation vertex $r_i$, let $G_i$ be its galled cycle, $v_i$ its child, and let $Q$ be the number of inward galled cycles in $\Gamma$.

Suppose $Q>0$ and pick an inward galled cycle $G_i$. Let $B_i$ be the bridge of $G_i$ containing $v_i$. Since $\{r_i,v_i\}$ is a cut edge of $\overline N$ by Lemma~\ref{lmm:retcutvertex}, $B_i$ is an inner $1$-bridge of $G_i$ in $\Gamma$ with the unique attachment vertex $r_i$, hence transferable by Corollary~\ref{cor:inner1bridge}.

Let $F_i$ be the face in $\Gamma$ that is incident with $r_i$ but not incident with $v_i$. We construct a new planar drawing $\Gamma'$ from $\Gamma$ by transferring the inner bridge $B_i$ of $G_i$ in  $\Gamma$ such that $\Gamma'(B_i)$, the restriction of $\Gamma'$ to $B_i$, is strongly equivalent to $\Gamma(B_i)$, and $\Gamma'(B_i)$ is contained in the face $F_i$. Such a face $F_i$ is unique and satisfies $F_i\subseteq\mathrm{ext}(G_i)$, as $\{r_i,v_i\}$ is a cut edge and hence $\Gamma(r_i)$ is incident with only two faces. Since $B_i$ meets the rest of $\overline N$ only at $r_i$, placing a copy of $\Gamma(B_i-\{r_i\})$ in $F_i$ and redrawing $\Gamma(r_i,v_i)$ realises this while keeping the rest of $\Gamma$ fixed.

Now $\Gamma'(v_i)\in\mathrm{ext}(G_i)$, so $G_i$ is not inward in $\Gamma'$. For any other galled cycle $G_j$ we have $r_i\notin V(G_j)$, so $G_j$ is either disjoint from $B_i-\{r_i\}$ or contained in it. In the first case the drawing of $G_j$ is fixed. In the latter case, strong equivalence keeps the child of $G_j$ on the same side of $\Gamma(G_j)$. Hence no other inward status changes, $Q$ strictly decreases, and $\Gamma'(\rho)$ remains on the boundary of the outer face since $\Gamma$ and $\Gamma'$ differ only on $B_i$. Therefore, every vertex incident with the outer boundary of $\Gamma$ is also incident with the outer boundary of $\Gamma'$.

Repeating until $Q=0$ gives a drawing $\Gamma^*$ with no inward galled cycle, hence no inward cycle by Lemma~\ref{pinkcycleandgall}. Thus every cycle $C$ contains a reticulation vertex $r$ whose child $v$ satisfies $\Gamma^*(v)\notin\mathrm{int}(C)$. Since $\{r,v\}$ is a cut edge lying on no cycle, $v\notin V(C)$, and the two neighbours of $r$ on $C$ are its parents $p_1,p_2$, so $\Gamma^*(p_1),\Gamma^*(p_2)\in\Gamma^*(C)$. As $v\notin V(C)$, we have $\Gamma^*(v)\notin\Gamma^*(C)$. Combining this with 
$\Gamma^*(v)\notin\mathrm{int}(C)$, we get $\Gamma^*(v)\in\mathrm{ext}(C)$.  Since $\Gamma^*$ is planar, the arc $\Gamma^*(r,v)$ meets $\Gamma^*(C)$ only 
at $r$. As its other endpoint $\Gamma^*(v)$ lies in $\mathrm{ext}(C)$, the Jordan curve theorem gives $\Gamma^{*\circ}(r,v)\subseteq\mathrm{ext}(C)$.
The hypotheses of Theorem~\ref{Phylogenetic Thomassen} therefore hold, and $N$ admits an upward planar drawing.
\end{proof}

\section {Outerplanar Galled Networks} \label{sec:outer}

In this section we present three characterisations of outerplanar galled networks, each given by a different type of forbidden structure: forbidden vertex configurations, forbidden undirected subgraphs, and forbidden directed subgraphs.

A well-known theorem by 
Chartrand and Harary \cite{Chartrand-Harary} says  that an undirected graph is outerplanar if and only if it contains no subdivision of the complete graph $K_4$ or the complete bipartite graph $K_{2,3}$ as a subgraph which can be further strengthened 
for phylogenetic networks as follows.

\begin{lemma} \label{lmm:k4k23}
A phylogenetic network $N$ is outerplanar if and only if its underlying graph $\overline{N}$ contains no subdivision of $K_{2,3}$ as a subgraph.    
\end{lemma}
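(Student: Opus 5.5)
By the theorem of Chartrand and Harary, $\overline{N}$ is outerplanar if and only if it contains no subdivision of $K_4$ and no subdivision of $K_{2,3}$. The forward direction of the lemma is therefore immediate. For the converse, it suffices to show that if $\overline{N}$ contains a subdivision of $K_4$, then it also contains a subdivision of $K_{2,3}$. So the plan is to take a subdivision $H$ of $K_4$ in $\overline{N}$ and find a $K_{2,3}$-subdivision inside $\overline{N}$, using the degree constraints of a phylogenetic network.

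The key structural input is that every vertex of $N$ has degree at most $3$ in $\overline{N}$. The root and leaves have degree $1$; tree vertices and reticulation vertices have degree $3$. Hence the four branch vertices $a,b,c,d$ of $H$ each have degree exactly $3$ in $H$, and every edge of $\overline{N}$ at a branch vertex is used by $H$. More importantly, the canonical paths of $H$ need not be trivial, which is what we exploit.

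The construction I intend to use is the standard one. Since $N$ is simple and acyclic, pick a canonical path, say between $a$ and $b$, and a vertex on it. If some canonical path of $H$ has an internal vertex $x$, we proceed as follows. Let $x$ lie on the canonical path $P_{ab}$ between $a$ and $b$. Consider the two vertices $x$ and $c$ as the "poles" of a $K_{2,3}$. The three internally disjoint paths from $c$ are:
\begin{itemize}[label={}]
\item $c \to a \to x$, via $P_{ca}$ and $P_{ab}[a,x]$;
\item $c \to b \to x$, via $P_{cb}$ and $P_{ab}[b,x]$;
\item $c \to d \to x$?
\end{itemize}
The third fails, because $d$ does not reach $x$ directly. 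So I would instead use the poles $\{a,b\}$ and the three paths $P_{ab}$, $P_{ac}\cup P_{cb}$ and $P_{ad}\cup P_{db}$. These are pairwise internally disjoint paths between $a$ and $b$, giving a $\theta$-graph, which is a subdivision of $K_{2,3}$ as soon as each of the three paths has length at least $2$. The last two paths have length at least $2$ automatically, since they pass through $c$ and $d$ respectively. The only issue is $P_{ab}$, which must contain an internal vertex. Equivalently, I need some pair of branch vertices whose canonical path is nontrivial.

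The main step is therefore to show that a $K_4$-subdivision in $\overline{N}$ cannot have all six canonical paths of length $1$, that is, $\overline{N}$ cannot contain $K_4$ itself as a subgraph. Suppose $a,b,c,d$ induce a $K_4$. Each of them then has degree $3$, with all neighbours inside $\{a,b,c,d\}$, so this $K_4$ is a connected component of $\overline{N}$. Since $N$ is connected, $\overline{N}=K_4$, and $N$ has no root (a vertex of degree $1$), which is a contradiction. Hence some canonical path, after relabelling $P_{ab}$, has an internal vertex, and the $\theta$-graph above is a $K_{2,3}$-subdivision in $\overline{N}$.

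The only delicate point is this degree and connectivity argument ruling out a literal $K_4$; the rest is routine.
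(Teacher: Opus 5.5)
Your proof is correct, and its skeleton is the same as the paper's. You reduce via Chartrand--Harary to showing that a $K_4$-subdivision forces a $K_{2,3}$-subdivision. You then use a nontrivial canonical path $P_{ab}$ to build the theta graph $P_{ab}$, $P_{ac}\cup P_{cb}$, $P_{ad}\cup P_{db}$; this is exactly the paper's $K_{2,3}$ with poles $v_i,v_j$ and degree-$2$ branch vertices $v_k,v_l,s$. Finally, you rule out a literal $K_4$.

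You differ only in that last step. The paper argues via orientations. It invokes Lemma~\ref{lmm:deg3-tree} to declare all four vertices tree vertices, and then shows that no orientation of the edge $\{v_3,v_4\}$ is compatible with in-degree $1$ at both endpoints.

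You use only undirected information: maximum degree $3$ in $\overline{N}$, connectivity of $N$ (every vertex descends from the unique source $\rho$), and the existence of the degree-$1$ root. From these you conclude that a $K_4$ would have to be all of $\overline{N}$, which is impossible.

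Your version is more elementary, and it is valid for arbitrary phylogenetic networks, as the lemma is stated. Lemma~\ref{lmm:deg3-tree} is proved only for galled networks, so the paper's argument as written covers only the galled case. That is all the paper needs later. For general networks one could alternatively note that any acyclic orientation of $K_4$ has a vertex of in-degree $3$.
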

\begin{proof}
By Chartrand and Harary's characterisation of outerplanar graphs \cite{Chartrand-Harary}, 
it suffices to show that if $\overline{N}$ contains a subdivision of $K_4$, then $\overline{N}$ contains a subdivision of $K_{2,3}$.

Suppose that $\overline{N}$ contains a $K_4$. Then, in $N$, each edge of this $K_4$ is directed. Let its vertices be $v_1, v_2, v_3, v_4$, where each $v_i$ is a tree vertex by Lemma~\ref{lmm:deg3-tree}. Suppose that $N$ contains the edges $(v_2,v_1)$, $(v_1,v_3)$, and $(v_1,v_4)$. However, either direction of the edge $\{v_3,v_4\}$ contradicts the definition of a tree vertex for $v_3$ and $v_4$, since in either case one of the two vertices would be a reticulation vertex. Hence $\overline{N}$ contains no subgraph isomorphic to $K_4$.

Furthermore, we claim that any subdivision of $K_4$ contains a subdivision of $K_{2,3}$ as a subgraph. Let $H$ be a subdivision of $K_4$ with branch vertices $v_1, v_2, v_3, v_4$ and canonical paths $P_{i,j}$ corresponding to the edges $\{v_i, v_j\}$ of $K_4$. Choose a degree~$2$ vertex $s$ on some canonical path $P_{i,j}$. Then $H$ contains a subdivision of $K_{2,3}$ with degree~$3$ branch vertices $\{v_i, v_j\}$ and degree~$2$ branch vertices $\{v_k, v_l, s\}$. Its canonical paths are $P_{i,k}, P_{i,l}, P_{j,k}, P_{j,l}$, together with the two subpaths of $P_{i,j}$ obtained by splitting it at $s$.
\end{proof}

In view of Lemma~\ref{lmm:k4k23}, we could focus on 
subdivisions $H$ of $K_{2,3}$ for phylogenetic networks.
For later use, we say that $H$, or equivalently the corresponding subdivision of $K_{2,3}$ in the underlying graph of a phylogenetic network, is \emph{normal} if one of the three canonical paths between the two degree~$3$ branch vertices $t_1$ and $t_2$ of $H$ is a tree path. Note that by Lemma~\ref{lmm:cycle}, this is equivalent to saying that precisely one of the three canonical paths between the two degree~$3$ branch vertices $t_1$ and $t_2$ of $H$ is a tree path.
Otherwise, we call it \emph{not normal} (See Figure~\ref{k23andnormal} for an example). When no confusion can arise, we also say  that a subdivision of $K_{2,3}$ is normal or not normal. This enables us to provide a stronger version of Lemma~\ref{lmm:k4k23} for galled networks.  

\begin{figure}[h]
    \centering
    \includegraphics[scale=0.6]{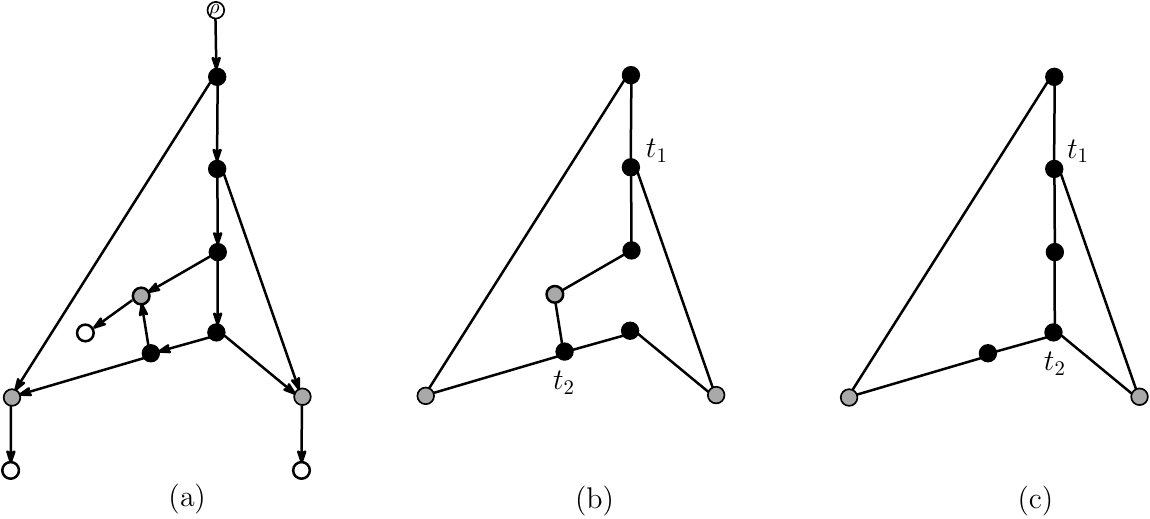}
    \caption{(a) A non-outerplanar galled network $N$. (b) A subdivision of $K_{2,3}$ in $\overline{N}$: each of its three paths between the two degree~$3$ branch vertices contains a reticulation vertex, so it is not normal. (c) Another subdivision of $K_{2,3}$ in $\overline{N}$: exactly one of the three paths between the two degree~$3$ branch vertices is a tree path, so it is normal.}
    \label{k23andnormal}
\end{figure}

\begin{theorem}\label{thm:normalk23}
A galled network $N$ is  outerplanar if and only if its underlying graph $\overline{N}$ does not contain a normal subdivision of  $K_{2,3}$ as a subgraph. 
\end{theorem}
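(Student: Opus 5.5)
The forward direction is immediate. If $N$ is outerplanar, then Lemma~\ref{lmm:k4k23} says that $\overline{N}$ contains no subdivision of $K_{2,3}$ whatsoever, so in particular it contains no normal one. All the work is in the converse. By Lemma~\ref{lmm:k4k23} it suffices to show that if $\overline{N}$ contains some subdivision $H$ of $K_{2,3}$, then it also contains a normal one. Throughout, I will use the reformulation that a subdivision of $K_{2,3}$ is exactly a theta graph: two vertices $x,y$ joined by three internally disjoint paths, each of length at least $2$. So the goal becomes: find a theta graph in $\overline{N}$ one of whose three paths is a tree path and all of whose paths have length at least $2$.

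\textbf{Reduction.} Let $t_1,t_2$ be the degree~$3$ branch vertices of $H$ and $Q_1,Q_2,Q_3$ the canonical paths between them. Since $H$ is $2$-connected, Lemma~\ref{lmm:deg3-tree} makes $t_1,t_2$ tree vertices. Corollary~\ref{cor:treepath-2-connected} then gives a unique tree path $T$ between $t_1$ and $t_2$ in $\overline{N}$. If $T$ equals some $Q_i$ we are done, so assume every $Q_i$ contains a reticulation vertex.

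\textbf{Ear construction.} Fix a reticulation vertex $r$ on $Q_1$ with parents $p_1,p_2$. Since the outgoing edge of $r$ is a cut edge (Lemma~\ref{lmm:retcutvertex}), both parents lie on $Q_1$. The galled cycle $G_r$ supplies a tree path $P'$ between $p_1$ and $p_2$ through the top vertex, avoiding $r$. Since $H$ contains no cycle consisting solely of tree vertices (Lemma~\ref{lmm:cycle}), $P'$ cannot lie entirely inside $H$ together with an $H$-path avoiding reticulations. More concretely, I will take a maximal subpath $E$ of $P'$ (or of $T$, if that is more convenient) that is internally disjoint from $H$ and has distinct endpoints $x,y\in V(H)$. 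Such an ear exists as soon as $P'\not\subseteq H$, and $E$ is a tree path. Because $H$ is $2$-connected, there are two internally disjoint $x$--$y$ paths $R_1,R_2$ in $H$. Then $R_1\cup R_2\cup E$ is a theta graph whose ear $E$ is a tree path, and it is normal provided all three paths have length at least $2$.

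\textbf{Remaining cases.} First, if $P'\subseteq H$, I will argue that $G_r\subseteq H$ and that $G_r$ meets the other canonical paths in a way that forces some $Q_i$, or an alternative route, to be tree. Alternatively, I will switch to the ear taken from $T$: $T$ starts at $t_1$ along some $Q_i$ and must leave $H$ somewhere, since it is not any $Q_i$ and cannot close a tree-only cycle. Second, I must ensure that $R_1,R_2$ can be chosen with length at least $2$ when $E$ is a single chord edge. In that case $x,y$ are adjacent tree vertices, and I will instead choose a different theta graph inside $H\cup E$ using the branch structure, for instance taking $x$ and a branch vertex as the new degree~$3$ vertices. If neither rerouting works, I will fall back on a minimality argument: choose $H$ minimising $\nret{Q_1}+\nret{Q_2}+\nret{Q_3}$, and show that replacing the two edges at $r$ by $P'$ yields a subdivision with fewer reticulation vertices. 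I expect this degenerate-case analysis (the ear lying in $H$ or having length $1$) to be the main obstacle.
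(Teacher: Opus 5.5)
\textbf{There is a gap: the chord case is not handled.} Several parts of your proposal are sound. The forward direction and the reduction to a non-normal $H$ with the unique tree path $T$ between $t_1,t_2$ are correct. The ear argument also works whenever the ear $E$ has length at least $2$. The case $P'\subseteq H$ is easy to dismiss, although you did not do so: then $G_r\subseteq H$, and every cycle of a theta graph is the union of two of its paths, so $G_r=Q_1\cup Q_j$. Since $G_r$ has only one reticulation, this forces $Q_j$ to be a tree path, contrary to assumption.

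The unresolved case is when the ear is a single chord $E=\{x,y\}\notin E(H)$. Here the obstruction is the length-$1$ path $E$ itself, not $R_1,R_2$; those always have length at least $2$, because $R_i\cup E$ is a cycle, $E$ is a tree path, and Lemma~\ref{lmm:cycle} applies. Your proposed reroutings do not close this case:
\begin{itemize}
\item If $x,y$ lie on the same $Q_i$, the only $K_{2,3}$-subdivisions in $H\cup E$ are the original one and the shortcut theta between $t_1,t_2$. The shortcut need not be normal.
\item If $x\in Q_i$ and $y\in Q_j$ with $i\ne j$, then $H\cup E$ is a $K_4$-subdivision. Suppose both halves of $Q_i$ at $x$ and both halves of $Q_j$ at $y$ contain reticulations. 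Then no $K_{2,3}$-subdivision inside $H\cup E$ is normal: the pair $\{x,y\}$ is excluded because $E$ has length $1$, and for every other branch pair each of the three paths contains a reticulation. So ``choosing a different theta graph inside $H\cup E$'' cannot succeed.
\end{itemize}
Your fallback is also invalid as stated. Replacing the two edges at $r$ by $P'$ need not give a subdivision, because $P'$ may meet $H$ at further vertices, which is exactly why you needed ears. A carefully organised minimality argument over the chord cases might be made to work, but it is not in the proposal.

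\textbf{The missing idea is to discard one canonical path rather than seek an ear relative to all of $H$.} Write $T=(w_0=t_1,w_1,\dots,w_n=t_2)$ and label so that $w_1\in V(Q_1)$. Since $w_1$ is internal on $Q_1$, it is not on $Q_2\cup Q_3$.
\begin{itemize}
\item If $T$ has no internal vertex on $Q_2\cup Q_3$, then $T\cup Q_2\cup Q_3$ is normal.
\item Otherwise let $u=w_j$ be the first internal vertex of $T$ on $Q_2\cup Q_3$, say on $Q_3$. Then $j\ge 2$, and $T[t_1,u]$, $Q_3[t_1,u]$ and $Q_2\cup Q_3[t_2,u]$ are three internally disjoint paths between $t_1$ and $u$.
\end{itemize}
In the second case, the first path is a tree path of length $j\ge 2$. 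The second has length at least $2$ because it closes a cycle with $T[t_1,u]$ and so must contain an internal reticulation. Because $T[t_1,u]$ may run in and out of the discarded path $Q_1$ freely, chords never arise. This is the paper's argument.
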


\begin{proof}
By Lemma~\ref{lmm:k4k23}, it is sufficient to prove that if $\overline{N}$ contains a subdivision $H$ of $K_{2,3}$ as a subgraph, then  $\overline{N}$ contains a normal subdivision of $K_{2,3}$.

Denote the two  degree~$3$ branch vertices of $H$ by $t_1$ and $t_2$, and the three internally disjoint paths between them by $P_1,P_2,P_3$. Since $H$ is 2-connected,  $t_1$ and $t_2$ are tree vertices in view of  Lemma~\ref{lmm:deg3-tree} as $\deg_H(t_1)=\deg_H(t_2)=3$. Furthermore, we may assume that $H$ is not normal, as otherwise the theorem clearly holds. This implies that 
for $1\le i \le 3$, the path $P_i$ is not a tree path, and hence contains at least one reticulation vertex.

On the other hand, as $H$ is 2-connected, Corollary~\ref{cor:treepath-2-connected} implies that there exists a unique tree path $P=(w_0:=t_1,w_1,\dots,w_n:=t_2)$ between $t_1$ and $t_2$ in $\overline{N}$.
Denote the three neighbours of  $t_1$ in $\overline{N}$ by $v_1,v_2,v_3$ such that $v_i$ is contained in $P_i$ for $1\le i \le 3$. 
Swapping the index of $P_i$ if necessary, we may further assume that $w_1=v_1$.

Note that if $P$ is internally disjoint from $P_2$ and $P_3$, then 
$P$, $P_2$, and $P_3$ are three internally disjoint paths between $t_1$ and $t_2$. 
This shows that $P\cup P_2\cup P_3$ is a normal subdivision of $K_{2,3}$, as required. 

Therefore it remains to consider the case that $P$ has an internal vertex on $P_2\cup P_3$.
Let $j$ be the smallest index with $0<j<n$ such that $w_j\in V(P_2)\cup V(P_3)$, and set $u:=w_j$. Without loss of generality, assume that $u\in V(P_3)$. 
By the minimality of $j$, $Q_1 := P[t_1,u]$, $Q_2 := P_2\cup P_3[t_2,u]$, and $Q_3 := P_3[t_1,u]$ are three pairwise internally disjoint paths between $t_1$ and $u$.  
Furthermore, as a subpath of $P$, $Q_1$ is a tree path.
Hence $H':=Q_1\cup Q_2\cup Q_3$ is a normal subdivision of $K_{2,3}$ with branch vertices $t_1$ and $u$, completing the proof.
\end{proof}

Using Theorem~\ref{thm:normalk23}, we will provide another characterisation of outerplanar galled networks using two types of forbidden vertex configurations. The first type is given by primitive tree vertices introduced in Section~\ref{sec:galled}, namely internal tree vertices on the intersecting paths of two galled cycles.
The second type is is given by {\em separable} tree vertices, that is, a tree vertex $v$ contained in precisely three galled cycles $G_1,G_2,G_3$ such that for the three neighbours $v_1,v_2,v_3$ of $v$, we have $v_i\not \in V(G_i)$ for each $1\le i \le 3$.  Note that a vertex cannot be simultaneously separable and primitive. Furthermore, we have the following two properties concerning separable and primitive tree vertices.

\begin{lemma} \label{lmm:sepuni}
If $v$ is a separable tree vertex, then $v$ is the unique common intersection of the three galled cycles containing $v$.
\end{lemma}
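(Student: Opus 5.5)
We need to show that $V(G_1)\cap V(G_2)\cap V(G_3)=\{v\}$. The natural route is Lemma~\ref{lmm:overlapone}: each pairwise intersection $G_i\cap G_j$ is a single tree path, and we show that this path is just the vertex $v$.

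First we pin down which edges at $v$ each galled cycle uses. Since $v$ lies on the cycle $G_i$, exactly two of its three incident edges lie in $G_i$. By separability $v_i\notin V(G_i)$, so $G_i$ contains precisely the edges $\{v,v_j\}$ and $\{v,v_k\}$, where $\{i,j,k\}=\{1,2,3\}$. Consequently, for $i\neq j$ the only edge at $v$ shared by $G_i$ and $G_j$ is $\{v,v_k\}$, where $k$ is the third index.

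Next we look at a pairwise intersection, say $P_{12}=G_1\cap G_2$. By Lemma~\ref{lmm:overlapone} it is a single tree path containing $v$, and by the first step it contains the edge $\{v,v_3\}$. Hence $v$ is an endpoint of $P_{12}$: if it were internal, $P_{12}$ would use two edges at $v$ common to both $G_1$ and $G_2$, but only one such edge exists. So $P_{12}$ is a path starting at $v$ and leaving along $\{v,v_3\}$. Likewise $P_{13}$ leaves $v$ along $\{v,v_2\}$, and $P_{23}$ leaves $v$ along $\{v,v_1\}$.

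Finally, suppose some $w\neq v$ lies in $V(G_1)\cap V(G_2)\cap V(G_3)$. Then $w\in P_{12}$ and $w\in P_{13}$. Both are subpaths of the cycle $G_1$ starting at $v$, but $P_{12}$ leaves $v$ along $\{v,v_3\}$ and $P_{13}$ along $\{v,v_2\}$, so they run in opposite directions around $G_1$. For them to meet again at $w$, their union would have to be all of $G_1$. In that case $G_1\cap(G_2\cup G_3)$ covers $G_1$, so $G_1$ would contain the reticulation vertex $r_1$ of $G_1$ inside $G_2$ or $G_3$. This is impossible, since intersections consist only of tree vertices and each reticulation vertex lies in a unique galled cycle. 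Hence the two arcs meet only at $v$, and $w$ cannot exist.

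The main subtlety is making this last step rigorous. The two tree paths $P_{12}$ and $P_{13}$ on $G_1$ are both anchored at $v$ and go in opposite directions. Neither can pass $r_1$, because $r_1\notin G_2\cup G_3$. So each is confined to one side of $G_1$ between $v$ and $r_1$, and these two sides meet only at $v$ and $r_1$. Therefore the common intersection of the three galled cycles is exactly $\{v\}$.
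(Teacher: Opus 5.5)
Your proof is correct and follows essentially the paper's route. Lemma~\ref{lmm:overlapone} makes each pairwise intersection a tree path leaving $v$ along a distinct edge, and a second common vertex $w$ would force two such paths from $v$ to $w$ to close up a cycle with no reticulation vertex. The paper reaches this contradiction via Lemma~\ref{lmm:cycle}, applied to the subpaths of $G_1\cap G_2$ and $G_2\cap G_3$ (both arcs of $G_2$), whereas you confine $P_{12}$ and $P_{13}$ to opposite sides of $G_1$ between $v$ and $r_1$, which is the same observation.
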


\begin{proof}
Since $\{v, v_i\} \notin E(G_i)$ for $1 \le i \le 3$, the pairwise intersection $G_j \cap G_k$ contains the edge $\{v, v_i\}$ at $v$, so the three pairwise intersecting paths contain distinct edges at $v$. Suppose for contradiction that there exists $w \in V(G_1) \cap V(G_2) \cap V(G_3)$ with $w \neq v$. By Lemma~\ref{lmm:overlapone}, each pairwise intersection of galled cycles is a single tree path, and the subpaths of $G_1 \cap G_2$ and $G_2 \cap G_3$ between $v$ and $w$ are two tree paths containing distinct edges at $v$. Their union contains a cycle consisting entirely of tree vertices, contradicting Lemma~\ref{lmm:cycle}.
\end{proof}

\begin{lemma}\label{lmm:three-galled-cycles}
Let $v$ be a tree vertex in a galled network $N$ such that the number of galled cycles containing $v$ is at least three. 
Then $v$ is either primitive or separable (but not both).
\end{lemma}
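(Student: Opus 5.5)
Let $v$ be a tree vertex with neighbours $v_0$ (parent), $v_1,v_2$ (children), and suppose $v$ lies in at least three distinct galled cycles $G_1,G_2,G_3,\dots$. The plan is a pigeonhole argument on the edges at $v$. Each galled cycle through $v$ uses exactly two of the three edges $\{v,v_0\},\{v,v_1\},\{v,v_2\}$; call this pair the \emph{type} of the cycle at $v$. There are only three possible types.

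\emph{Case (a): two cycles share a type.} Say $G_1$ and $G_2$ use the same two edges $\{v,a\},\{v,b\}$ at $v$. By Lemma~\ref{lmm:overlapone}, $G_1\cap G_2$ is a single tree path $P$. Both edges $\{v,a\}$ and $\{v,b\}$ lie in $E(G_1)\cap E(G_2)$. Since $a,b$ are then common vertices, they lie on $P$. Moreover $P$ is an induced intersection, so its edges are the common edges, and hence $v$ is an internal vertex of $P$. One should check that the common edges belong to the intersecting path: this holds because $P$ is connected and contains $a,v,b$, and a tree path contains the unique tree path between its vertices (Lemma~\ref{lmm:cycle}). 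Therefore $v$ is primitive.

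\emph{Case (b): no two cycles share a type.} Then there are at most three cycles through $v$, so exactly three, and they use the three distinct pairs. Labelling $G_i$ as the cycle avoiding the edge $\{v,v_i\}$, each $G_i$ contains the other two neighbours. One must also check $v_i\notin V(G_i)$, not merely that the edge is avoided. If $v_i\in V(G_i)$, then $G_i\cup\{v,v_i\}$ is 2-connected with $v,v_i$ of degree $3$. Arguing exactly as in the proof of Lemma~\ref{lmm:neighbourprimitive}, one of the two $v$–$v_i$ subpaths of $G_i$ avoids the reticulation vertex and closes a cycle of tree vertices, contradicting Lemma~\ref{lmm:cycle}. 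Hence $v$ is separable. (The definition says ``precisely three'', which holds in this case.)

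\emph{Exclusivity.} It remains to show $v$ is not both primitive and separable, as noted before Lemma~\ref{lmm:sepuni}. If $v$ is separable, exactly three cycles contain $v$, with pairwise distinct types. Primitivity would need two cycles in which $v$ is internal to their intersecting path, which forces both cycles to use the same two edges at $v$. Since the types are distinct, this is impossible.

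The main obstacle is the justification in Case (a) that sharing both edges at $v$ makes $v$ internal to the intersecting path; it is routine given Lemma~\ref{lmm:overlapone}.
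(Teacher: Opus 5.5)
Your proof is correct and is essentially the paper's argument: classifying each galled cycle through $v$ by the pair of edges it uses at $v$ is the same pigeonhole as the paper's bipartite incidence graph between the neighbours of $v$ and the galled cycles. The case split is also the same, namely some neighbour or edge avoided by two cycles (primitive) versus exactly three cycles in bijection with the neighbours (separable). Your version is slightly more careful in two places: you explicitly verify $v_i\notin V(G_i)$, which the paper only asserts, and you get exclusivity directly from the pairwise distinct edge-pairs rather than via Lemma~\ref{lmm:sepuni}.
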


\begin{proof}
Let $G_1,G_2,\cdots,G_k$ be the set of galled cycles containing $v$ for some $k\ge 3$. 
Let $v_1,v_2,v_3$ be the neighbours of $v$ in $N$. 
Consider the bipartite graph $H$ with bipartition $\{v_1,v_2,v_3\}$ and $\{G_1,\dots,G_k\}$, where each galled cycle $G_j$ is regarded as a single vertex of $H$, and we put an edge $\{v_i,G_j\}$ in $H$ if and only if $v_i\notin V(G_j)$.
Since each $G_i$ contains precisely two of the three neighbours of $v$, it follows that each $G_j$ is connected to precisely one $v_i$. Now we have two cases to consider. 

In the first case, each $v_i$ has degree one in $H$.  Then we have $k=3$. Relabelling their indices if necessary, we may assume that $\{v_i,G_i\}$ is an edge in $H$ for $1\le i \le 3$. This implies $v_i\not \in V(G_i)$ for each $i$ and hence $v$ is separable.
Furthermore, for $1\le i <j\le 3$, neither $v_i$ nor $v_j$ is contained in $V(G_i)\cap V(G_j)$. This implies that $v$ is an endpoint of $G_i\cap G_j$, and hence $v$ is not primitive.

The other case is that some $v_i$ has degree at least two in $H$. Without loss of generality, we may assume that $v_1$ is connected to at least two galled cycles, say $G_1$ and $G_2$. This implies that neither $G_1$ nor $G_2$ contains $v_1$ in $N$, and hence both galled cycles contain the edges $\{v,v_2\}$ and $\{v,v_3\}$, so $v$ is an internal vertex on the intersecting path of $G_1$ and $G_2$; by definition, $v$ is primitive. 
To prove that $v$ is not separable, it suffices to consider the case that $k=3$. Since $G_3$ is connected to only one $v_i$ in $H$, by swapping $v_2$ and $v_3$ if necessary, we may assume that $v_3$ has  degree $0$ in $H$. This implies $v_3\in V(G_1)\cap V(G_2)\cap V(G_3)$ with $v_3\ne v$.
If $v$ were separable, Lemma~\ref{lmm:sepuni} would force $v$ to be the unique common vertex of $G_1,G_2,G_3$, a contradiction. Hence $v$ is not separable.
\end{proof}

Now we are in a position to present the  characterisation of outerplanar galled networks using forbidden vertex configurations.

\begin{theorem}\label{thm:outerplanar-forbidden}
A galled network $N$ is outerplanar if and only if it contains neither a  primitive tree vertex nor a separable tree vertex.
\end{theorem}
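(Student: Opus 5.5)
We prove both directions through Theorem~\ref{thm:normalk23}: $N$ is outerplanar if and only if $\overline{N}$ contains no normal subdivision of $K_{2,3}$.

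\emph{Forward direction (contrapositive).} We show that a primitive or separable tree vertex produces a normal subdivision of $K_{2,3}$.

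Suppose first that $v$ is primitive. Then $v$ is an internal vertex of the intersecting path $P=G_1\cap G_2$, whose endpoints we call $t_1,t_2$. By Lemma~\ref{lmm:overlapone}, $P$ is a tree path. The remaining parts $G_1-\mathrm{int}(P)$ and $G_2-\mathrm{int}(P)$ are paths between $t_1$ and $t_2$, and they are internally disjoint from $P$ and from each other. So $G_1\cup G_2$ is a subdivision of $K_{2,3}$ with degree~$3$ branch vertices $t_1,t_2$. Its canonical paths are $P$, which is a tree path, and two paths each containing a reticulation vertex; hence it is normal. We choose $v$ itself as the degree~$2$ branch vertex on $P$. (Lemma~\ref{lmm:neighbourprimitive} is not needed here; it is only useful for richer configurations.)

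Suppose next that $v$ is separable, with galled cycles $G_1,G_2,G_3$ and neighbours $v_1,v_2,v_3$. By Lemma~\ref{lmm:sepuni}, the three cycles meet only at $v$. The pairwise intersection $G_j\cap G_k$ is a tree path $P_i$ starting at $v$ with first edge $\{v,v_i\}$; let $x_i\neq v$ denote its other endpoint. Take $t_1=v$ and $t_2=x_1$. The first canonical path is $P_1$, a tree path. The second is $P_2$ followed by the part of $G_1$ from $x_2$ to $x_1$ that avoids $v$. The third is $P_3$ followed by the part of $G_1$ from $x_3$ to $x_1$ that avoids $v$.

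I expect this construction to need care in two places. First, $G_1$ is the cycle containing both $P_2$ and $P_3$, and $x_1$ lies on $G_2\cap G_3$; that $x_1$ also lies on $G_1$ is not automatic. To avoid this issue, I would use instead the cycle formed by $P_2$, $P_3$ and the rest of $G_1$, and the cycle formed by $P_1$, $P_3$ and the rest of $G_2$. Their union is a theta graph whose branch vertices are $v$ and the point where $G_1$ and $G_2$ diverge, namely $x_3$ on $G_1\cap G_2$. This gives degree~$3$ branch vertices $v$ and $x_3$, with canonical path $P_3$ (a tree path) together with two paths through $G_1\setminus P_3$ and $G_2\setminus P_3$. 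Second, $G_3$ must supply a degree~$2$ branch vertex outside this theta. Because the $G_i$ are otherwise disjoint by Lemma~\ref{lmm:sepuni}, $G_3$ can be used to reroute: one path uses $P_1$ then $G_3$ to reach $P_2$, so we obtain three internally disjoint paths between $v$ and $x_3$. Checking this disjointness carefully is the main technical step.

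\emph{Backward direction (contrapositive).} Let $H$ be a normal subdivision of $K_{2,3}$ with degree~$3$ branch vertices $t_1,t_2$, tree path $P_1$, and non-tree paths $P_2,P_3$.

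The cycles $C_2=P_1\cup P_2$ and $C_3=P_1\cup P_3$ each contain $P_1$. By Theorem~\ref{cycleandgalledcycle}, every vertex of $P_1$ lies in a galled cycle $G_r$ with $r$ on $C_2$, that is, on $P_2$; similarly there is a galled cycle $G_{r'}$ with $r'$ on $P_3$. Since $r\neq r'$, these are distinct galled cycles.

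Pick an internal vertex $w$ of $P_1$, or use $t_1$ if $P_1$ is a single edge. Then $w$ lies in $G_r\cap G_{r'}$. If $w$ is internal to that intersecting path, $w$ is primitive. Otherwise, one argues at $t_1$ using degree~$3$: the vertex $t_1$ lies in galled cycles coming from $C_2$, from $C_3$, and from $P_2\cup P_3$, and these involve distinct pairs of neighbours of $t_1$. This gives at least three galled cycles through $t_1$, and Lemma~\ref{lmm:three-galled-cycles} then makes $t_1$ primitive or separable.

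The hard part is controlling which pairs of edges these galled cycles use at $t_1$ (Remark~\ref{rmk:edgeambig}). I would resolve it by choosing $H$ minimal, so that $P_2$ and $P_3$ each contain a single reticulation vertex. Then $C_2=G_r$ and $C_3=G_{r'}$ by the last part of Theorem~\ref{cycleandgalledcycle}, and the edges they use at $t_1$ are determined.
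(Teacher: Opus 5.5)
Your primitive case is correct and is the paper's argument, but the separable case and the whole backward direction have genuine gaps.

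\textbf{Separable case.} You correctly see that $x_1$ need not lie on $G_1$; in fact Lemma~\ref{lmm:sepuni} forces $x_1\notin V(G_1)$. However, your replacement plan, with degree-$3$ branch vertices $v$ and $x_3$, fails in exactly the case that matters.
\begin{itemize}
\item If $G_1\cap G_2$ has an internal vertex, that vertex is already primitive.
\item Otherwise $G_1\cap G_2$ is the single edge $\{v,v_3\}$, so $x_3=v_3$. Since $\deg_{\overline N}(v)=3$, any three internally disjoint paths between $v$ and $x_3$ use all three edges at $v$. One of these paths is therefore the edge $\{v,x_3\}$ itself, which carries no degree-$2$ branch vertex. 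No rerouting through $G_3$ can change this.
\end{itemize}
Also, the $G_i$ are not ``otherwise disjoint'': Lemma~\ref{lmm:sepuni} only says the triple intersection is $\{v\}$, while $G_j\cap G_k=P_i$.

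The missing observation is this. Let $R_i$ be the non-shared path of $G_i$ between $x_j$ and $x_k$. Then $G_1\cup G_2\cup G_3$ is a subdivision of $K_4$ with branch vertices $v,x_1,x_2,x_3$, which is what the paper uses. Explicitly, take $x_2,x_3$ as the degree-$3$ branch vertices with the three paths $R_1$, $P_2\cup P_3$ and $R_3\cup R_2$. Here $P_2\cup P_3$ is a tree path with $v$ as its degree-$2$ branch vertex, so this is a normal subdivision of $K_{2,3}$.

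\textbf{Backward direction.} Switching to $t_1$ is unjustified. Theorem~\ref{cycleandgalledcycle} applied to $P_2\cup P_3$ may return the same galled cycle already obtained from $C_2$ or $C_3$. So you need not get three distinct galled cycles through $t_1$; Remark~\ref{rmk:edgeambig} is exactly this obstruction.

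Your remedy, choosing $H$ so that $P_2$ and $P_3$ each contain exactly one reticulation vertex, is not available in general. For such an $H$, $C_2$ and $C_3$ would be galled cycles whose intersecting path is $P_1$, so every internal vertex of $P_1$ would already be primitive. Hence in $D_3$, which has a separable vertex but no primitive one, no such $H$ exists. In $D_3$, every normal subdivision of $K_{2,3}$ has degree-$3$ branch vertices $v_i,v_j$, each lying in only two galled cycles. The vertex you must find is the internal vertex $v$ of the tree path, and your first step discards it because it is an endpoint of $G_r\cap G_{r'}$.

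The missing idea is to stay at the internal vertex $w$ of $P_1$ and look at its third edge $e$.
\begin{itemize}
\item If $e$ is a cut edge, it lies on no cycle, so both $G_r$ and $G_{r'}$ use the two edges of $P_1$ at $w$. Hence $w$ is primitive.
\item Otherwise there is a path $P$ from $w$ through $e$ to some $x\in V(H)\setminus\{w\}$, internally disjoint from $H$; by degree, $x\neq t_1,t_2$.
\begin{itemize}
\item If $x\in V(P_1)$, the cycle $P\cup P_1[w,x]$ yields a galled cycle through $w$ whose reticulation lies on $P$.
\item If $x$ lies on $P_2$ (symmetrically $P_3$), take the cycle formed by $P$, the subpath of $P_2$ from $x$ to the endpoint $t_i$ on the side not containing $r$, and $P_1[w,t_i]$. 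This yields a galled cycle through $w$ whose reticulation is neither $r$ nor $r'$.
\end{itemize}
\end{itemize}
Either way $w$ lies in at least three galled cycles, and Lemma~\ref{lmm:three-galled-cycles} finishes the argument.
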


\begin{proof}
By Theorem~\ref{thm:normalk23}, $N$ is outerplanar if and only if $\overline{N}$ contains no normal subdivision of $K_{2,3}$. Hence it suffices to show that $\overline{N}$ contains a normal subdivision of $K_{2,3}$ if and only if $N$ contains a separable or primitive tree vertex.

($\Leftarrow$)
Suppose first that $N$ contains a primitive tree vertex $v$. Then $v$ is an internal tree vertex on the intersecting path $Q$ between two galled cycles $G_1$ and $G_2$, where $Q$ has endpoints $t_1$ and $t_2$. In particular, $|V(Q)|\geq 3$. The non-shared paths of $G_1$ and $G_2$ are internally disjoint from $Q$ and from each other. Furthermore, for the reticulation vertex $r_1$ of $G_1$ and $r_2$ of $G_2$, we have $r_1 \notin V(Q)$ and $r_2 \notin V(Q)$. Hence $Q$ together with the two non-shared paths of $G_1$ and $G_2$ forms a normal subdivision of $K_{2,3}$ in $\overline{N}$ with degree~$3$ branch vertices $t_1$ and $t_2$ and degree~$2$ branch vertices $r_1$, $r_2$, and $v$.

Suppose next that $N$ contains a separable tree vertex $v$ with neighbours $u_1,u_2,u_3$. Then there exist precisely three galled cycles $G_1,G_2$, and $G_3$ containing $v$ such that $u_i \notin V(G_i)$. Let $e_i=\{v,u_i\}$ for $1 \le i \le 3$. This implies that $e_i \in G_j\cap G_k$ for $\{i,j,k\}=\{1,2,3\}$. By Lemma~\ref{lmm:overlapone}, $G_j\cap G_k$ is a tree path containing $e_i$ and having $v$ as one endpoint; denote its other endpoint by $w_i$. By Lemma~\ref{lmm:sepuni}, $V(G_1)\cap V(G_2)\cap V(G_3)=\{v\}$, so $w_1,w_2,w_3$ are pairwise distinct. Furthermore, since $G_i$ avoids $e_i$, the two shared tree paths of $G_i$ end at $w_j$ and $w_k$, and hence the non-shared path of $G_i$ connects $w_j$ with $w_k$. Hence $G_1\cup G_2\cup G_3$ contains a subdivision of $K_4$ with branch vertices $v,w_1,w_2,w_3$, and by Theorem~\ref{thm:normalk23}, $\overline{N}$ contains a normal subdivision of $K_{2,3}$.

\smallskip

($\Rightarrow$)
Let $H$ be a normal subdivision of $K_{2,3}$ in $\overline{N}$, with degree $3$ branch vertices $t_1,t_2$ and three internally disjoint nontrivial paths $P_1,P_2,P_3$ between $t_1$ and $t_2$. We assume that $P_1$ is the unique tree path between $t_1$ and $t_2$ in $H$. Choose an internal vertex $v$ of $P_1$. For $i=1,2$, let $P_1^i$ denote the subpath $P_1[t_i,v]$, and let $u_i$ be the neighbour of $v$ contained in $P_1^i$. Let $u_3$ be the other neighbour of $v$ that is distinct from $u_1$ and $u_2$. Set $e_i:=\{v,u_i\}$ for $i\in\{1,2,3\}$. Consider the two cycles $P_1\cup P_2$ and $P_1\cup P_3$. Since $P_1$ is a tree path, applying Theorem~\ref{cycleandgalledcycle} to each cycle yields reticulation vertices $r_1$ and $r_2$, which are internal vertices of $P_2$ and $P_3$, respectively, whose galled cycles $G_1$ and $G_2$ both contain $v$. Since $P_2$ and $P_3$ are internally disjoint, we have $r_1\neq r_2$.

If $e_3$ is a cut edge of $\overline{N}$, then $e_3 \notin E(G_1)$ and $e_3 \notin E(G_2)$. Since $G_1$ and $G_2$ are cycles containing $v$ but neither can contain $e_3$, both must contain the edges $e_1$ and $e_2$. Therefore, $v$ is an internal vertex on the intersecting path of $G_1$ and $G_2$. By definition, $v$ is a primitive tree vertex.

Now we may assume that $e_3$ is not a cut edge in $\overline{N}$. Hence $\overline{N}-v$ is connected, and there is a vertex $w\in V(H)\setminus\{t_1,t_2\}$ with $w\ne v$ and a path $P$ between $v$ and   $w$ containing $e_3$ and internally disjoint from $H$. Since $P$ is internally disjoint from $H$ and has both endpoints in $H$, the graph $H\cup P$ is also $2$-connected. As $\deg_{H \cup P}(w) = 3$, by Lemma~\ref{lmm:deg3-tree}, $w$ is a tree vertex of $N$. Then we have the following two cases regarding the positions of $w$.

\smallskip
\noindent\emph{(a) $w$ is contained in $P_1$.}
In this case, $P\cup P_1[v,w]$ is a cycle containing $v$.
By Theorem~\ref{cycleandgalledcycle} and the fact that $P_1[v,w]$ is a tree path, there exists a reticulation vertex $r_3$ which is an internal vertex on $P$ such that $v \in V(G_3)$. Since $P$ is internally disjoint from $H$ while $r_1 \in V(P_2)$ and $r_2 \in V(P_3)$, we have $r_3 \notin \{r_1, r_2\}$. It follows that $v$ is contained in at least three distinct galled cycles. By Lemma~\ref{lmm:three-galled-cycles}, $v$ is either primitive or separable.

\smallskip
\noindent\emph{(b) $w$ is not contained in $P_1$.} Reindexing if necessary, we may assume that $w\in V(P_2)$. 
Then $w$ splits $P_2$ into two subpaths $P_2[t_1,w]$ and $P_2[w,t_2]$.
As $P$ is internally disjoint from $H$ and the paths $P_1,P_2,P_3$ are pairwise internally disjoint, we have two cycles $C_1:=P\cup P_2[t_1,w]\cup P_1^1$ and $C_2:=P\cup P_2[w,t_2]\cup P_1^2$. Since $w$ is a tree vertex, $r_1\ne w$, so $r_1$ lies on exactly one of the two subpaths of $P_2$. Without loss of generality, assume that $r_1\in V(P_2[t_1,w])$. Now consider the cycle $C_2$, then $r_1\notin V(C_2)$. Since $C_2$ is internally disjoint from $P_3$, we also have $r_2\notin V(C_2)$. Applying Theorem~\ref{cycleandgalledcycle} to $C_2$, there is a reticulation vertex $r_3\in V(C_2)$ such that $v\in V(G_3)$. Since $r_1 \notin V(C_2)$ and $r_2 \notin V(C_2)$, we have $r_3 \notin \{r_1, r_2\}$. It follows that $v$ is contained in at least three distinct galled cycles. By Lemma~\ref{lmm:three-galled-cycles}, $v$ is either primitive or separable. This completes the proof.
\end{proof}

\begin{remark}
Lemma~\ref{lmm:three-galled-cycles} and Theorem~\ref{thm:outerplanar-forbidden} imply that in an outerplanar galled network $N$, no vertex can be contained in three or more distinct galled cycles.
\end{remark}

In the remainder of this section, we will present the final characterisation of outerplanar galled networks using forbidden directed graphs. To this end, we recall the definition of cut-visible vertices introduced in Section~\ref{sec:background}.
Let $H$ be a subgraph of a graph $G$. A vertex $v \in V(H)$ is said to be \emph{cut-visible} from $H$ in $G$ if there exists a path $P$ in $G$ connecting $v$ to a cut vertex of $G$ such that $V(P) \cap V(H) = \{v\}$ and $P$ contains exactly one cut vertex of $G$, which is an endpoint of $P$. In particular, if $v$ itself is a cut vertex of $G$, then $P$ can be taken to be the degenerate path consisting solely of $v$.

\begin{lemma}\label{lmm:overlap-path-property}
Let $N$ be a galled network, and let $v$ be a primitive tree vertex that is an internal vertex of the intersecting path $P$ of two galled cycles $G_1$ and $G_2$. Let $H := G_1 \cup G_2$. Then we have the following three assertions:

(i) For any vertex $w \in V(H)$, every path in $N$ between $v$ and $w$ that is internally disjoint from $H$ has at least one reticulation vertex as an internal vertex.

(ii) $v$ is cut-visible from $H$ in $\overline{N}$.

(iii) There exists a path $P^*$ between $v$ and $x$ for $x\in \{\rho\}\cup X$ such that $P^*$ is internally disjoint from $H$.

\end{lemma}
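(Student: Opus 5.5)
I would organise the argument around the third edge at $v$. Since $v$ is an internal vertex of $P$, two of its edges lie on $P$. Lemma~\ref{lmm:neighbourprimitive} gives the third neighbour $u\notin V(G_1)\cup V(G_2)=V(H)$. So every path from $v$ that is internally disjoint from $H$ and has positive length starts with the edge $e=\{v,u\}$. The three assertions are then statements about what lies beyond $e$.

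For (i), let $Q$ be a path between $v$ and some $w\in V(H)$, internally disjoint from $H$. Note $w\neq v$. I will show that $Q$ contains a reticulation vertex other than its endpoints.
\begin{enumerate}
\item Since $H$ is connected, $Q$ together with a path in $H$ from $w$ to $v$ contains a cycle through $e$.
\item Suppose $Q$ had no internal reticulation vertex. If $w$ is a tree vertex, then $Q$ is a tree path.
\item There is also a tree path in $H$ between $v$ and $w$: inside $G_1$ or $G_2$ one of the two $v$–$w$ arcs avoids the corresponding reticulation vertex. When $w\in P$, the subpath of $P$ works.
\item These two tree paths share only $v$ and $w$, and differ at $v$ because $Q$ starts with $e$ while the path in $H$ does not. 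Their union is a cycle of tree vertices, contradicting Lemma~\ref{lmm:cycle}.
\end{enumerate}
If instead $w$ is a reticulation vertex $r_i$ of $G_i$, then $Q$ enters $r_i$ through its outgoing edge, since both parents of $r_i$ lie in $G_i$. That edge is a cut edge by Lemma~\ref{lmm:retcutvertex}, so it lies on no cycle; but here it lies on the cycle through $e$ constructed in step~1, a contradiction. Hence $Q$ has an internal reticulation vertex.

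For (ii) and (iii), I would follow $e$ outward, which I expect to be the main obstacle. The plan is to walk from $v$ through $u$ along a maximal path avoiding $H$. If the component $K$ of $\overline N - (V(H)\setminus\{v\})$ containing $u$ reaches no vertex of $H$ other than through $v$, then $e$ is a cut edge and $v$ is a cut vertex. Otherwise, by (i), every route from $K$ back to $H$ passes through some reticulation vertex $r$, and that $r$ is a cut vertex by Lemma~\ref{lmm:retcutvertex}.

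To build the ray for (ii), I will take a shortest path from $v$ to a cut vertex inside $K\cup\{v\}$. By minimality, the endpoint is the only cut vertex on this path, and the path meets $H$ only at $v$. This gives the required pivotal ray.

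For (iii), I would first use acyclicity: directed paths descending from a vertex of $K$ terminate at leaves. Starting from $u$, or from a reticulation vertex's child, I will keep descending; whenever a directed path would re-enter $H$, I switch to the branch beyond a reticulation cut edge. A cut edge separates its child side from $H$ except through that edge, so the descent stays outside $H$ and ends at a leaf $x$. The delicate step is confirming that the child side of such a reticulation cut edge is disjoint from $H$. If it is not, I would instead climb via parents to reach $\rho$, by the symmetric argument.
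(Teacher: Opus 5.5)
Your arguments for (i) and (ii) are sound and essentially the paper's. For (ii) the paper takes the first cut vertex along a returning path rather than a shortest path inside $K$; both work. A minor point on (i): the edges $\{v,v_1\}$ and $\{v,v_2\}$ of $P$ are themselves paths from $v$ to $V(H)$ with no internal vertices. So your claim that every positive-length path from $v$ internally disjoint from $H$ starts with $e$ is false as stated, and (i) has to be read for paths containing $e$; the paper tacitly does the same.

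The genuine gap is (iii). You leave it as a plan whose key step you yourself flag as unresolved, and the mechanism as described does not work.
\begin{enumerate}
\item The switching rule is not well defined. You never say which reticulation vertex is used, how the part of $P^*$ from $v$ to it is chosen, or why the concatenation is a path internally disjoint from $H$.
\item The claim that the child side of ``such a'' reticulation cut edge misses $H$ is false for an arbitrary reticulation vertex of $K$. Suppose $v$ is the common top vertex of $G_1$ and $G_2$, and its parent $u$ is the child of a reticulation vertex $r$ (with $G_r$ higher up). Then $r\in K$, yet the child side of $\{r,u\}$ contains all of $H$.
\item The fallback, climbing to $\rho$ ``by the symmetric argument'', is asserted without saying where the climb starts, why it avoids $H$, or how it joins the rest of $P^*$.
\end{enumerate}

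The missing idea is already in your proof of (i). Let $Q$ be a path from $v$ through $e$ that first meets $V(H)\setminus\{v\}$ at $w$. Then $Q$ together with a $w$--$v$ path in $H$ is a cycle. So $Q$ contains no cut edge, and hence never uses the outgoing edge of a reticulation vertex (Lemma~\ref{lmm:retcutvertex}). Yet by (i), $Q$ has an internal reticulation vertex. A directed path, oriented either from $v$ to $w$ or from $w$ to $v$, uses the outgoing edge of each of its internal reticulation vertices. Hence $Q$ cannot be directed.

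Therefore simply follow the orientation of $e$:
\begin{enumerate}
\item if $e=(v,u)$, extend $(v,u)$ by repeatedly moving to a child until a leaf is reached;
\item if $e=(u,v)$, extend it by repeatedly moving to a parent until $\rho$ is reached.
\end{enumerate}
Acyclicity makes this a path that never returns to $v$. The observation above shows it never meets $V(H)\setminus\{v\}$, so it is the required $P^*$.

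The paper argues differently. If $v$ is a cut vertex, it follows the orientation of $e$ inside the component beyond $e$. Otherwise it takes a returning path $P'$ with first reticulation vertex $r$. Since $H\cup P'$ is connected and avoids the cut edge leaving $r$, everything below $r$ misses $H$. So $P'[v,r]$ followed by any descent from $r$ works.
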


\begin{proof}
Let $t_1, t_2$ be the endpoints of the shared tree path $P$. For $i=1,2$, let $v_i$ be the neighbour of $v$ belonging to the subpath $P_i := P[v,t_i]$. Denote the third neighbour of $v$ by $v_3$, and let $e := \{v,v_3\}$. By Lemma~\ref{lmm:neighbourprimitive}, we have $v_3\notin V(H)$. As $v_1, v_2$ lie on the galled cycles $G_1, G_2$, neither $\{v, v_1\}$ nor $\{v, v_2\}$ is a cut edge in $\overline{N}$; hence $v$ is a cut vertex of $\overline{N}$ if and only if $e$ is a cut edge.

\smallskip
\noindent\textit{Case 1: $v$ is a cut vertex of $\overline{N}$.}

In this case, no path in $\overline{N}$ between $v$ and a vertex of $V(H)$ can be internally disjoint from $H$, so (i) is satisfied. Moreover, (ii) is satisfied since $v$ itself is a cut vertex. It remains to prove (iii). Let $B$ denote the maximal connected subgraph of $\overline{N} - v$ containing $v_3$; since $v$ is a cut vertex, $B$ is disjoint from $V(H) \setminus \{v\}$. If $e=(v,v_3)$, following outgoing edges in $N$ from $v$ along $e$ yields a directed path from $v$ to a leaf; if $e=(v_3,v)$, following incoming edges from $v$ along $e$ yields a directed path from the root $\rho$ to $v$. Either way, all internal vertices of the path lie in $B$, so the path is internally disjoint from $H$.

\smallskip
\noindent\textit{Case 2: $v$ is not a cut vertex of $\overline{N}$.}

In this case, there exists a path $P'$ between $v$ and a vertex $w \in V(H)$ that is internally disjoint from $H$ and contains $e$. For (i), let $r_1$ denote the unique reticulation vertex of $G_1$ and assume that $P'$ is a tree path. Without loss of generality, assume $w \in V(G_1)$. Note that $w \ne t_i$ for $i \in \{1,2\}$, since otherwise $\deg_{\overline{N}}(t_i)=4$. Furthermore, $w\neq r_1$ as $\deg_{G_1 \cup P'}(w)=3$ hence $w$ is a tree vertex. Therefore, $v$ and $w$ divide $G_1$ into two internally disjoint paths: $Q'_1$ containing $r_1$, and $Q'_2$ containing only tree vertices. Since $P'$ is internally disjoint from $G_1$, the paths $Q'_2$ and $P'$ are internally disjoint, and the cycle $Q'_2 \cup P'$ consists entirely of tree vertices, contradicting Lemma~\ref{lmm:cycle}. Hence $P'$ contains at least one reticulation vertex.

For (ii), $P'$ contains a reticulation vertex by (i), which is a cut vertex of $\overline{N}$ by Lemma~\ref{lmm:retcutvertex}. Hence $P'$ contains a one cut vertex. Let $v_c$ be the cut vertex of $\overline{N}$ on $P'$ closest to $v$. Then $V(P'[v,v_c]) \cap V(H) = \{v\}$ and $v_c$ is the only cut vertex of $P'[v,v_c]$, so $v$ is cut-visible from $H$ in $\overline{N}$.

For (iii), let $r$ be the reticulation vertex of $P'$ closest to $v$. Since $V(P'[v,r]) \cap V(H)=\{v\}$ and $r$ is a cut vertex, the subnetwork below $r$ shares no common vertex with $H$. Extending along the outgoing edge from $r$ yields a directed path $P_{\ell}$ from $r$ to a leaf, vertex-disjoint from $H$. Viewing $P_{\ell}$ as a path in $\overline N$, the union $P^*:=P'[v,r]\cup P_{\ell}$ is a path between $v$ and a leaf, internally disjoint from $H$.
\end{proof}

\begin{remark}\label{rmk:to-root}
In the proof of Lemma~\ref{lmm:overlap-path-property} Case 2(iii), there may also exist a path between the root $\rho$ and $v$ that is internally disjoint from $H$. Specifically, if some tree vertex $t$ on the subpath $P'[v, r]$ has an incoming edge not lying on $P'$ whose reverse extension to $\rho$ is internally disjoint from $H$, then combining this extension path with $P'[v, t]$ yields the required path in $\overline{N}$.
\end{remark}

\begin{theorem}\label{outer forbidden}
A galled network $N$ is outerplanar if and only if $N$ contains no directed subdivision of any of $D_1,D_2,D_3$ in Figure~\ref{fig:mainforbidden} as a subgraph.
\end{theorem}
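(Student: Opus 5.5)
We prove both directions via Theorem~\ref{thm:outerplanar-forbidden}, which reduces outerplanarity of a galled network $N$ to the absence of primitive and separable tree vertices. From Fig.~\ref{fig:mainforbidden}, $D_1$ and $D_2$ are minimal galled networks containing a primitive tree vertex (the square vertex). They differ in whether the cut-visible neighbour of the primitive vertex leads downward to a leaf or upward to the root. $D_3$ is the minimal galled network containing a separable tree vertex.

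\emph{($\Rightarrow$)} Suppose $N$ contains a directed subdivision $D'$ of some $D_i$. We show $N$ is not outerplanar. Since outerplanarity is closed under subgraphs, it suffices to show that $\overline{D'}$ is not outerplanar. Subdividing edges preserves the underlying topology, so $\overline{D'}$ is a subdivision of $\overline{D_i}$. We then check directly that each $\overline{D_i}$ contains a subdivision of $K_{2,3}$.
\begin{itemize}
\item For $D_1$ and $D_2$, the two galled cycles meeting along a path through the primitive vertex give three internally disjoint paths between the endpoints of the intersecting path, exactly as in the ($\Leftarrow$) part of the proof of Theorem~\ref{thm:outerplanar-forbidden}.
\item For $D_3$, the three galled cycles through the separable vertex yield a subdivision of $K_4$, hence of $K_{2,3}$.
\end{itemize}
Lemma~\ref{lmm:k4k23} then gives non-outerplanarity. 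No galled-network structure is needed in this direction, so it is routine.

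\emph{($\Leftarrow$)} Suppose $N$ is not outerplanar. By Theorem~\ref{thm:outerplanar-forbidden}, $N$ has a primitive or a separable tree vertex $v$.

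\emph{Separable case.} Take the three galled cycles $G_1,G_2,G_3$ through $v$. By Lemma~\ref{lmm:sepuni}, the pairwise intersecting paths run from $v$ to distinct endpoints $w_1,w_2,w_3$. By Lemma~\ref{lmm:twotoploca}, the tops are comparable and each pairwise intersection is a directed path from the lower top. This pins down the orientation pattern of $G_1\cup G_2\cup G_3$ up to directed subdivision. We then argue that this pattern is a directed subdivision of $D_3$, possibly after extending by a directed path from $\rho$ to the highest top, if $D_3$ includes a root edge.

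\emph{Primitive case.} Let $v$ be internal to the intersecting path $P$ of $G_1$ and $G_2$, and set $H=G_1\cup G_2$. Lemma~\ref{lmm:twotoploca} splits this into two subcases: the tops coincide, or $s_2<s_1$ with $P$ directed from $s_2$. This fixes the directed shape of $H$. Lemma~\ref{lmm:overlap-path-property}(iii) then supplies a path $P^*$ from $v$ to a terminal $x\in\{\rho\}\cup X$ that is internally disjoint from $H$. We also add a directed path from $\rho$ to the top of $H$; it can be taken disjoint from $H$ apart from its endpoint, because the top of $H$ is the highest vertex.

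The main obstacle is arguing that $P^*$, possibly rerouted, together with $H$ forms a \emph{directed} subdivision of $D_1$ or $D_2$. The issue is that $P^*$ is only an undirected path: it may pass through reticulations and change direction. To handle this, replace $P^*$ by a monotone path. If $e=\{v,v_3\}$ is outgoing from $v$, follow outgoing edges from $v_3$ until a leaf is reached; this is the argument of Case~1 of Lemma~\ref{lmm:overlap-path-property}. The danger in Case~2 is that this directed path re-enters $H$. If it does, the re-entry point $w$ together with the tree path back to $v$ gives a cycle. We then apply Theorem~\ref{cycleandgalledcycle} and Lemma~\ref{lmm:overlap-path-property}(i) to force a reticulation before $w$, and use its cut-edge child (Lemma~\ref{lmm:retcutvertex}) to escape to a leaf, giving a directed path internally disjoint from $H$. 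If $e$ is incoming to $v$, we argue symmetrically, following parents up to $\rho$ as in Remark~\ref{rmk:to-root}. This makes $v$ a tree vertex whose third edge reaches a terminal monotonically, matching $D_1$ or $D_2$ according to the direction and the top subcase. Verifying that these combinations give exactly $D_1$ or $D_2$, and no other shapes, is a finite case check against the figure.
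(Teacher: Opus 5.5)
Your ($\Rightarrow$) direction is sound, and your primitive case follows the paper's split via Lemma~\ref{lmm:twotoploca}. Your monotone extension of the third edge of $v$ is genuinely needed when $s_2<s_1$. There $v$ is internal to the directed path $P$, so its third edge is outgoing, and $D_2$ has a leaf below its square vertex; the paper's write-up of that subcase does not spell this path out. The re-entry you worry about cannot actually happen. Before the first reticulation the path would be a tree path, contradicting Lemma~\ref{lmm:overlap-path-property}(i). After a reticulation it lies beyond a cut edge, on the side not containing $H$.

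There are two smaller omissions in this case. First, every $D_j$ needs directed paths from its reticulations to leaves, which you never add; these avoid everything else by the same cut-edge argument. Second, when $s_1=s_2$ but $v\neq s$, you should use the primitive vertex $s$ with its root path and drop the path below $v$. Keeping both gives a subgraph that is a directed subdivision of neither $D_1$ nor $D_2$.

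The genuine gap is in the separable case. You claim that, by Lemma~\ref{lmm:twotoploca}, each pairwise intersection is a directed path from the lower top, which would force $G_1\cup G_2\cup G_3$ to be a directed subdivision of $D_3$. But part (i) of that lemma allows two tops to coincide, and then the conclusion is false. Consider the galled network with edges $\rho\to s$, $s\to v$, $s\to w$, $v\to a$, $v\to b$, $a\to r_2$, $a\to r_3$, $b\to r_1$, $b\to r_3$, $w\to r_1$, $w\to r_2$ and $r_i\to\ell_i$ for $i=1,2,3$. Here $v$ is separable, and its three galled cycles have tops $s,s,v$. The network has as many vertices and edges as $D_3$ but is not isomorphic to it: the child of the root has no reticulation child. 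So it contains no directed subdivision of $D_3$; it is caught by $D_1$ through the primitive vertex $s$.

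The missing step is the case ordering the paper uses: treat the separable case only under the extra hypothesis that $N$ has no primitive tree vertex. Then the following hold.
\begin{itemize}
\item The three tops are pairwise distinct, since a common top would be primitive by Lemma~\ref{lmm:twotoploca}(i).
\item Each pairwise intersection is a single edge at $v$.
\item The lowest top lies on all three cycles and hence equals $v$ by Lemma~\ref{lmm:sepuni}.
\item The middle top is the parent of $v$.
\end{itemize}
Only then is the orientation forced to be that of $D_3$. Adding a directed path from $\rho$ to the highest top and the three leaf paths below $r_1,r_2,r_3$ completes the subdivision.
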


\begin{proof}

($\Rightarrow$) Suppose $N$ contains a directed subdivision of $D_j$ for some $j \in \{1,2,3\}$. Note that $D_1$ and $D_2$ each contain a primitive tree vertex, and $D_3$ contains a separable tree vertex. Hence $N$ is not outerplanar by Theorem~\ref{thm:outerplanar-forbidden}.

\smallskip

($\Leftarrow$) Suppose $N$ is not outerplanar. Then by Theorem~\ref{thm:outerplanar-forbidden}, we have the following two cases.

\smallskip

\noindent\textit{Case (i). $N$ contains a primitive tree vertex $v$.} 

Then there exist two distinct galled cycles $G_1$ and $G_2$ such that $v$ is an internal vertex on the intersecting path $P$, whose two endpoints are denoted by $t_1, t_2$.  For $i \in \{1, 2\}$, let $r_i$ and $s_i$ be the reticulation vertex and top vertex of $G_i$, respectively, and let $\ell_i$ be a leaf reached from $r_i$ along a directed path. By Lemma~\ref{lmm:twotopcompa}, swapping the indices  if needed, we have either $s_1 = s_2$ or $s_1 > s_2$.

First consider the subcase where $s_1=s_2$. By Lemma~\ref{lmm:twotoploca}(i), $s:=s_1$ is an internal vertex on $P$ and also the highest vertex of $P$, and $P$ consists of two directed subpaths from $s$ to $t_1$ and from $s$ to $t_2$.
Therefore, both $t_1$ and $t_2$ are lowest common ancestors of $r_1$ and $r_2$. There exists a directed path $P_{i,j}$ from $t_i$ to $r_j$ along $G_j$ for $1\le i,j\le 2$ such that the four paths $P_{i,j}$ are pairwise internally disjoint directed paths. Combined with the directed path from $\rho$ to $s$ that is internally disjoint from $H$ and the directed paths from $r_1$ to $\ell_1$ and from $r_2$ to $\ell_2$, $N$ contains a directed subdivision of $D_1$.

Next consider the other subcase $s_1 > s_2$.
By Lemma~\ref{lmm:twotoploca}(ii), $s_2$ is the highest vertex of $P$, relabelling $t_1,t_2$ if necessary, we may assume $s_2 = t_1$. $P$ is a directed path from $s_2$ to the other endpoint $t_2$.
Since $s_1 > s_2$ and $s_2 \in V(G_1)$, the galled cycle $G_1$ contains a directed path from $s_1$ to $s_2$, and $t_2$ is the lowest common ancestor of $r_1$ and $r_2$ with two internally disjoint directed paths from $t_2$ to $r_1$ and from $t_2$ to $r_2$ along $G_1$ and $G_2$, respectively.
Combined with the directed path from $\rho$ to $s_1$, the other directed path from $s_1$ to $r_1$ along $G_1$ (not passing through $s_2$ and $t_2$), the directed path from $s_2$ to $r_2$ along $G_2$ (not passing through $t_2$), and the directed paths from $r_1$ to $\ell_1$ and from $r_2$ to $\ell_2$, $N$ contains a directed subdivision of $D_2$.

\smallskip
\noindent\textit{Case (ii). $N$ contains a separable tree vertex $v$ but contains no primitive tree vertex.}

By definition, $v$ is contained in exactly three distinct galled cycles $G_1, G_2, G_3$, with neighbours $v_1, v_2, v_3$ such that $v_i \notin V(G_i)$ for each $i \in \{1,2,3\}$. Let $H = G_1 \cup G_2 \cup G_3$. For $i \in \{1,2,3\}$, let $r_i, s_i$ denote the reticulation vertex and top vertex of $G_i$, and let $\ell_i$ be the leaf reached from $r_i$ along the directed path internally disjoint from $H$.

For $1\le i<j\le 3$, let $\{k\}=\{1,2,3\}\setminus\{i,j\}$ and consider the common path $P_{i,j}=G_i\cap G_j$. 
Since $v$ is separable, $P_{i,j}$ contains $v$ as an endpoint as shown in the proof of Lemma~\ref{lmm:three-galled-cycles}. Furthermore, as $N$ does not contain a primitive tree vertex, it follows that $P_{i,j}$ is an edge, which is necessarily $\{v,v_k\}$.
Furthermore, by Lemma~\ref{lmm:twotopcompa} and Lemma~\ref{lmm:twotoploca}, we have either $s_i<s_j$ or $s_j<s_i$ since $N$ does not contain any primitive tree vertex. Reindexing if necessary, assume $s_1 < s_2 < s_3$. By Lemma~\ref{lmm:twotoploca}(ii), $s_1$ lies on all three cycles, that is, $s_1 \in V(G_1) \cap V(G_2) \cap V(G_3)$. By the uniqueness in Lemma~\ref{lmm:sepuni} we have $s_1 = v$. As $s_1 \neq s_2$, since otherwise $N$ contains a primitive tree vertex, for $P_{2,3}=\{v,v_1\}$, we have $s_2=v_1$. Hence the directions of the edges incident with $v$ are $(v_1,v)$, $(v,v_2)$, and $(v,v_3)$.

Since $s_3$ is the unique top vertex of $H$, there is a directed path from $\rho$ to $s_3$ internally disjoint from $H$.
Combined with the two directed paths from $s_3$ to $r_3$ along $G_3$ (one passing through $v_1$, $v$, $v_2$, the other not), the two directed paths from $s_2$ to $r_2$ along $G_2$ (one passing through $v$, $v_3$, the other not), the two directed paths from $s_1$ to $r_1$ along $G_1$ (one passing through $v_2$, the other through $v_3$), and the directed paths from $r_1, r_2, r_3$ to $\ell_1, \ell_2, \ell_3$, $N$ contains a directed subdivision of $D_3$.
\end{proof}

\section{Terminal planar galled networks} \label{sec:terminal}

Parallel to Section~\ref{sec:outer}, 
in this section we shall present three characterisations of terminal planar galled networks, each given by a different type of forbidden structure: a forbidden vertex configuration, forbidden undirected subgraphs, and forbidden directed subgraphs.

We first prove an elementary lemma needed for the first characterisation.
To this end, we consider a pivotal basis of a subgraph $H$ of $\overline{N}$ for a galled network $N$, as introduced in Section~\ref{sec:background}. Then clearly every reticulation vertex $r$ in $H$ is cut-visible from $H$ in $\overline{N}$ as $r$ is itself a cut vertex of $\overline{N}$ by Lemma~\ref{lmm:retcutvertex}. This implies that all the reticulation vertices of $H$ form a pivotal basis of $H$ in $\overline{N}$.
We shall need the following properties of pivotal bases.

\begin{lemma}\label{lmm:cut-visible-basic}
Let $H$ be a subgraph of a graph $G$, and let $B$ be a pivotal basis of $H$ in $G$. Then the following assertions hold:
\begin{enumerate}
\item[(i)] If $B'$ is a subset of $B$, then  $B'$ is also a pivotal basis of $H$ in $G$.
\item[(ii)] If $H'$ is a subgraph of $H$, then $V(H')\cap B$ is a pivotal basis of  $H'$ in $G$.
\item[(iii)] If a vertex $v$ in $H$ is a cut vertex of $G$,   then $B \cup \{v\}$ is also a pivotal basis of $H$ in $G$. 
\item[(iv)] If $G=\overline{N}$ for a galled network $N$, and $r$ is a reticulation vertex of $H$, then $B\cup\{r\}$ is also a pivotal basis of $H$ in $\overline{N}$.
\end{enumerate}
\end{lemma}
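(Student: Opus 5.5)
All four assertions follow by working directly from the definition of a pivotal basis: we keep or modify the witnessing family of pairwise vertex-disjoint pivotal rays and check the two defining conditions. These are that each ray meets $H$ exactly in its basis vertex, and that its only cut vertex of $G$ is its far endpoint. Throughout, fix rays $P_1,\dots,P_k$ witnessing $B=\{v_1,\dots,v_k\}$.

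For (i), we simply keep the rays $P_i$ with $v_i\in B'$. Pairwise disjointness and $V(P_i)\cap V(H)=\{v_i\}$ are inherited, so nothing more is needed.

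For (ii), take the rays $P_i$ with $v_i\in V(H')$. Since $V(P_i)\cap V(H')\subseteq V(P_i)\cap V(H)=\{v_i\}$ and $v_i\in V(H')$, equality holds. Disjointness is again inherited, and the cut-vertex condition does not depend on $H$ at all. So these rays witness $V(H')\cap B$ as a pivotal basis of $H'$.

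For (iii), if $v\in B$ there is nothing to prove. Otherwise, add the degenerate ray consisting only of $v$. This is a valid pivotal ray because $v$ is a cut vertex and the ray meets $H$ exactly in $\{v\}$. The one check needed is disjointness from the existing rays. Each $P_i$ meets $H$ only in $v_i\neq v$, and $v\in V(H)$, so $v\notin V(P_i)$. Hence the enlarged family is pairwise vertex-disjoint.

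For (iv), it suffices to apply (iii), since $r$ is a cut vertex of $\overline N$ by Lemma~\ref{lmm:retcutvertex}.

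The only step that needs any care is the disjointness argument in (iii), and it reduces to the observation that every non-degenerate or degenerate ray $P_i$ touches $H$ only at its own basis vertex. The whole proof is therefore routine bookkeeping on the definition.
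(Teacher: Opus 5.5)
Your proposal is correct and follows the paper's proof essentially step for step. For (i) and (ii) you restrict the witnessing family of rays, for (iii) you add the degenerate ray at $v$ and get disjointness from $V(P_i)\cap V(H)=\{v_i\}$, and you derive (iv) from (iii) via Lemma~\ref{lmm:retcutvertex}.
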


\begin{proof}
Let $B = \{v_1, \dots, v_k\} \subseteq V(H)$ and $P_1, \dots, P_k$ be pivotal rays in $G$ such that, for each $1 \le i \le k$, $P_i$ connects $v_i \in B$ to a cut vertex of $G$, $V(P_i) \cap V(H) = \{v_i\}$, and distinct rays $P_i$ and $P_j$ are vertex-disjoint.

For (i): reindexing if necessary, assume that $B' = \{v_1, \dots, v_j\}$ for $j\leq k$. Since $H$ remains unchanged, we still have $V(P_i) \cap V(H) = \{v_i\}$ for all $1 \le i \le j$ and $P_1, \dots, P_j$ are still pairwise vertex-disjoint. Hence $B'$ is also a pivotal basis of $H$ in $G$.

For (ii): since $V(H') \subseteq V(H)$, we have $V(P_i) \cap V(H') \subseteq V(P_i) \cap V(H) = \{v_i\}$, and $v_i \in V(H') \cap B$ gives $V(P_i) \cap V(H')= \{v_i\}$. Hence the paths $P_i$ with $v_i\in V(H')\cap B$ show that $V(H')\cap B$ is a pivotal basis of $H'$ in $G$.

For (iii): set $v_{k+1} := v$ and let $P_{k+1}$ be the degenerate path consisting of $v$. If $v\in B$, there is nothing to prove. Thus assume $v\notin B$ and $v \neq v_i$ for $1 \leq i \leq k$. Then we have $v \notin V(P_i)$ since $V(P_i) \cap V(H) = \{v_i\}$. Thus $V(P_{k+1}) \cap V(P_i) = \emptyset$ and $B \cup \{v\}$ is a pivotal basis of $H$ in $G$.

For (iv), it follows directly from (iii) and Lemma~\ref{lmm:retcutvertex}.
\end{proof}

Let $[T,B]$ denote the {\em pivotal pair} consisting of a graph $T$ together with a specified vertex subset $B$ of $V(T)$. Then a \emph{pivotal subdivision} of the pair  $[T,B]$ in a graph $G$ is a subgraph $H$ of $G$ such that $H$ is a subdivision of $T$, and the branch vertices of $H$ corresponding to the vertices in $B$ form a pivotal basis of $H$ in $G$. Note that the usual notion of a subdivision of a graph $T$ in the literature is a pivotal subdivision of the pair $[T,\emptyset]$. 

Two particularly important pivotal pairs are used in this paper. First, we denote by $K_{2:3}$ the pivotal pair $[K_{2,3},B]$ depicted in the left panel of Figure~\ref{terminal forbid}, where $K_{2,3}$ is the complete bipartite graph with bipartition $(B,B')$, where $B$ consisting of the three degree-$2$ vertices, and its complement $B'$ consisting of two vertices of degree-$3$. Furthermore, we denote by $K_{4:2}$ the pivotal pair  $[T,B]$ in the right panel of Figure~\ref{terminal forbid}, where  $T$ is obtained from $K_4$ by subdividing each of two non-adjacent edges (i.e., two edges with no common endpoint) once, the set $B$ consists of the two resulting degree-$2$ vertices, while the complement of $B$ consists of the four vertices of degree-$3$. 
 With these definitions, we state the forbidden subgraph characterisation of terminal planarity in terms of the underlying graph.

\begin{figure}[ht]
\centering
\includegraphics[scale=0.55]{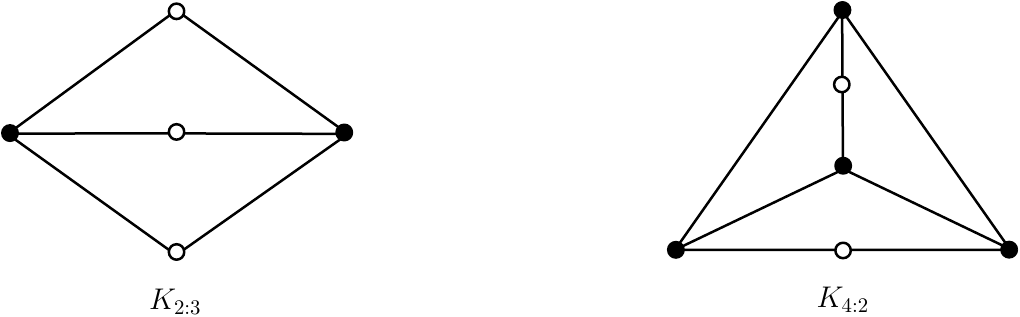}
\caption{Two examples of pivotal pairs $(T,B)$:  $K_{2:3}$ (left) and ${K}_{4:2}$ (right), in which the subset $B$ consists of the white vertices. }
\label{terminal forbid}
\end{figure}

\begin{theorem}\label{thm:terminal_forbid-te}
A phylogenetic network $N$ is terminal planar if and only if  $\overline{N}$ contains no subdivision of $K_{3,3}$, no pivotal subdivision of $K_{2:3}$, and no pivotal subdivision of ${K}_{4:2}$.
\end{theorem}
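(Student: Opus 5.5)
The plan is to reduce terminal planarity to ordinary planarity of an auxiliary graph, and then read Kuratowski's theorem back into $\overline{N}$. Let $N^+$ be obtained from $\overline{N}$ by adding a new vertex $\omega$ adjacent to $\rho$ and to every leaf in $X$.

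\textbf{Step 1: $N$ is terminal planar if and only if $N^+$ is planar.} If all terminals lie on the outer face of a planar drawing, we place $\omega$ in the outer face and join it to them without crossings. Conversely, given a planar drawing of $N^+$, we re-embed so that a face incident with $\omega$ becomes the outer face. Deleting $\omega$ then leaves every neighbour of $\omega$ on the outer face.

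\textbf{Step 2: apply Kuratowski's theorem to $N^+$.} Every vertex of $N^+$ other than $\omega$ has degree at most $3$. A subdivision of $K_5$ needs five branch vertices of degree $4$, so none exists. Hence $N^+$ is non-planar if and only if it contains a subdivision $S$ of $K_{3,3}$. We split into three cases according to the role of $\omega$ in $S$.
\begin{itemize}
\item If $\omega\notin V(S)$, then $S$ is a subdivision of $K_{3,3}$ in $\overline{N}$.
\item If $\omega$ is a branch vertex, then deleting $\omega$ leaves a subdivision $H$ of $K_{2,3}$. Its three degree-$2$ branch vertices are joined to $\omega$ by internally disjoint paths, and these paths end at three distinct terminals.
\item If $\omega$ is an internal vertex of a canonical path, then deleting $\omega$ cuts one edge of $K_{3,3}$. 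The result is a subdivision of $K_{3,3}$ minus an edge, whose two endpoints $a,b$ have degree $2$. Suppressing them gives exactly $K_4$ with two non-adjacent edges subdivided, with $a,b$ as the subdivision vertices. Moreover, $a$ and $b$ are joined by disjoint paths, otherwise disjoint from $H$, to two distinct terminals.
\end{itemize}
The second case is the candidate for $K_{2:3}$ and the third for $K_{4:2}$. The converse direction, building a $K_{3,3}$ subdivision in $N^+$ from each forbidden structure, runs the same correspondence backwards.

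\textbf{Step 3: the dictionary between terminal rays and pivotal rays.} This is the key technical step, and it has two halves.
\begin{itemize}
\item \emph{Terminal rays give pivotal rays.} A path from $v\in V(H)$ to a terminal $x$, internally disjoint from $H$, ends with the edge at $x$. This edge is a cut edge, so the neighbour of $x$ is a cut vertex of $\overline{N}$. Truncating the path at its first cut vertex gives a pivotal ray, and truncations of disjoint paths remain disjoint.
\item \emph{Pivotal rays give terminal rays.} Let $P$ be a pivotal ray from $v$ ending at the cut vertex $c$. Choose a component $K$ of $\overline{N}-c$ that contains no vertex of $H$ and no vertex of the other rays. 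Then extend $P$ inside $K$ to a terminal. For this we use the fact that every leaf block of the block-cut tree of $\overline{N}$ contains a terminal. Indeed, if such a block is not a single edge, its acyclic orientation has a source and a sink that are distinct. At least one of them is not the attaching cut vertex, so in $N$ it has in-degree $0$ or out-degree $0$, and is therefore $\rho$ or a leaf. This is impossible inside a $2$-connected block, since terminals have degree $1$. Hence descending through the block tree away from $H$ reaches a terminal.
\end{itemize}

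\textbf{Expected obstacle.} The main difficulty is guaranteeing that the component $K$ in the second half exists and that the extensions for different rays stay disjoint. Since $c$ is the only cut vertex on $P$, the part $(H\cup P)-c$ lies in one block-side of $c$. I would therefore argue via the block-cut tree that $H$ together with the other rays lies in a single branch at $c$. The reason is that $H$ is $2$-connected in the relevant cases and the other rays attach to $H$. Then any other branch at $c$ is free, and branches at distinct cut vertices, chosen away from $H$, are pairwise disjoint. If a ray stops at a cut vertex whose only other branch is occupied, I would reroute that ray to the next cut vertex along the block tree, away from $H$. Making this rerouting rigorous is the step I expect to require the most care.
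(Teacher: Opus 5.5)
Your proof is correct, and it takes a genuinely different route from the paper. The paper never applies Kuratowski's theorem itself. It imports the cut-labelled characterisation of Miyaji et al.\ (Theorem~\ref{thm:cut-label:str}, obtained from their Corollary~5.16 via Lemma~\ref{lem:H2_degree3_label0}). It then translates the eight $\mathcal H^*$ structures family by family: $(H_{2,i},h^*)\leftrightarrow K_{2:3}$, $(H_{3,i},h^*)\leftrightarrow K_{4:2}$ and $(H_1,h^*)\leftrightarrow K_{3,3}$. In doing so it reads the label-$1$ ends of the pendant paths as the ends of pivotal rays.

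You instead add an apex $\omega$ over the terminals, rule out $K_5$ by the degree bound, and classify a $K_{3,3}$ subdivision of $N^+$ by the position of $\omega$: absent, a branch vertex, or a subdivision vertex. This is self-contained, needing only Kuratowski's theorem and the block structure of $\overline N$. It also explains why exactly these three families appear. The paper's route is shorter on the page because the topological work is outsourced. In exchange it inherits the finer label-refined statement, which it reuses in Lemma~\ref{lem:galled_H20_H21}.

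The obstacle you anticipate in Step~3 does not arise, so no rerouting is ever needed.
\begin{itemize}
\item Let the ray $P_i$ end at $c_i$. Since the rays are pairwise disjoint, $c_i\notin V(P_j)$ for $j\neq i$. Also $H-c_i$ is connected because $H$ is $2$-connected. Hence $(H\cup\bigcup_j P_j)-c_i$ is connected and lies in a single component $M_i$ of $\overline N-c_i$.
\item Because $c_i$ is a cut vertex, some other component $K_i$ exists, and it is automatically free of $H$ and of all rays.
\item The sets $K_i$ are pairwise disjoint. Indeed, $K_i\cup\{c_i\}$ is connected and avoids $c_j$ (as $c_j\in M_i$). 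It also contains $c_i$, which lies in $M_j$. So $K_i\subseteq M_j$, and therefore $K_i\cap K_j=\emptyset$. The same reasoning gives $P_i\cap K_j=\emptyset$.
\end{itemize}

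You can also find a terminal in $K_i$ without leaf blocks, exactly as in Case~1 of Lemma~\ref{lmm:overlap-path-property}. Take a neighbour $y\in V(K_i)$ of $c_i$. Follow out-edges from $y$ if $(c_i,y)\in E(N)$, or in-edges if $(y,c_i)\in E(N)$. Acyclicity forbids a return to $c_i$, so this directed path stays in $K_i$ and ends at a leaf or at $\rho$.

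One cosmetic point: in your third case, $K_{3,3}$ minus an edge already \emph{is} $K_4$ with two non-adjacent edges subdivided once. Suppressing $a$ and $b$ would give $K_4$ itself.
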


The proof of the above theorem is  included in the Appendix, which is based on a slightly stronger reformulation of \cite[Corollary~5.16]{Miyaji2026}.

For galled networks, the three forbidden structures in Theorem~\ref{thm:terminal_forbid-te} are not independent. The next two lemmas show that both a subdivision of $K_{3,3}$ and a pivotal subdivision of $K_{4:2}$ force a pivotal subdivision of $K_{2:3}$.

\begin{figure}[htpb]
\centering
\includegraphics[width=0.9\textwidth]{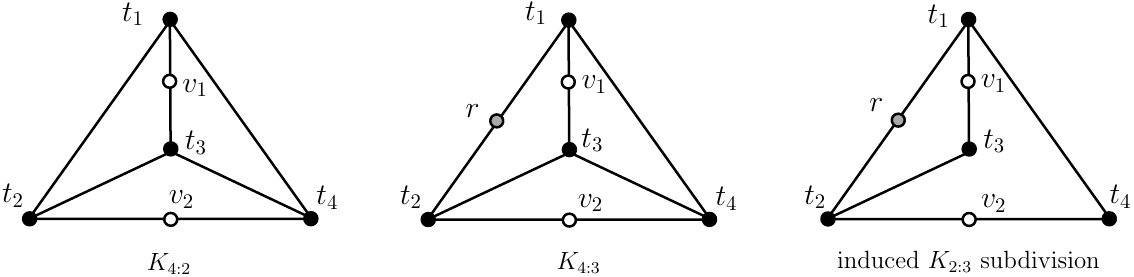}
\caption{Reduction from a pivotal subdivision $H$ of $K_{4:2}$ (left) to a pivotal subdivision $H'$ of $K_{2:3}$ (right), through an intermediate pivotal subdivision of $K_{4:3}$ (middle). $\{v_1,v_2\}$ is a pivotal basis of $H$, and $\{r,v_1,v_2\}$ is a pivotal basis of $H'$;}
\label{fig:s2_to_s1}
\end{figure}

\begin{lemma}\label{lem:k42_to_k23}
Let $N$ be a galled network. If $\overline{N}$ contains a pivotal subdivision of $K_{4:2}$, then $\overline{N}$ contains a pivotal subdivision of $K_{2:3}$.
\end{lemma}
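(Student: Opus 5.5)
The plan is to pass from the pivotal subdivision of $K_{4:2}$ to a pivotal subdivision of $K_{2:3}$ through an intermediate structure, as in Figure~\ref{fig:s2_to_s1}. First we find a reticulation vertex $r$ on a suitable cycle of $H$. Then we re-route $H$ into a subdivision $H'$ of $K_{2,3}$ whose three degree-$2$ branch vertices are $r$, $v_1$ and $v_2$.

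\textbf{Setup.} Let $H$ be a pivotal subdivision of $K_{4:2}$ in $\overline{N}$. Its degree-$3$ branch vertices are $a,b,c,d$, and $\{v_1,v_2\}$ is a pivotal basis, where $v_1$ is an internal vertex of the canonical path $P_{a,b}$ and $v_2$ is an internal vertex of $P_{c,d}$. Write $P_{x,y}$ for the canonical path of $H$ between branch vertices $x$ and $y$ (with $v_1,v_2$ regarded as subdividing $P_{a,b},P_{c,d}$). Since $H$ is $2$-connected, Lemma~\ref{lmm:deg3-tree} shows that $a,b,c,d$ are tree vertices.

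\textbf{Finding $r$.} Consider the cycle $C:=P_{a,c}\cup P_{c,b}\cup P_{b,d}\cup P_{d,a}$. It uses exactly the four canonical paths other than $P_{a,b}$ and $P_{c,d}$. By Lemma~\ref{lmm:cycle}, $C$ contains a reticulation vertex $r$. Since $a,b,c,d$ are tree vertices, $r$ is an internal vertex of one of these four paths, and in particular $r\notin\{v_1,v_2\}$. The four paths are interchangeable under the symmetry of $K_4$ fixing the pair of edges $\{a,b\}$, $\{c,d\}$. So we may assume $r$ lies on $P_{a,c}$.

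\textbf{Building $H'$.} Take $a$ and $c$ as the degree-$3$ branch vertices and $r,v_1,v_2$ as the degree-$2$ branch vertices. The six canonical paths are:
\begin{itemize}
\item $P_{a,c}[a,r]$ and $P_{a,c}[r,c]$, joining $r$ to $a$ and to $c$;
\item $P_{a,b}[a,v_1]$, joining $v_1$ to $a$;
\item $P_{a,b}[v_1,b]\cup P_{b,c}$, joining $v_1$ to $c$;
\item $P_{c,d}[c,v_2]$, joining $v_2$ to $c$;
\item $P_{a,d}\cup P_{c,d}[d,v_2]$, joining $v_2$ to $a$.
\end{itemize}
These paths are pairwise internally disjoint, because they are built from disjoint pieces of distinct canonical paths of $H$ and only $P_{b,d}$ is discarded. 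Hence $H'\subseteq H$ is a subdivision of $K_{2,3}$.

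\textbf{Pivotal basis.} By Lemma~\ref{lmm:cut-visible-basic}(ii), $V(H')\cap\{v_1,v_2\}=\{v_1,v_2\}$ is a pivotal basis of $H'$. Since $r$ is a reticulation vertex of $H'$, Lemma~\ref{lmm:cut-visible-basic}(iv) shows that $\{r,v_1,v_2\}$ is a pivotal basis of $H'$. The degenerate ray at $r$ meets neither ray of $v_1$ nor of $v_2$, because those rays meet $H\supseteq H'$ only at $v_1$, $v_2$ respectively. Thus $H'$ is a pivotal subdivision of $K_{2:3}$.

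The main obstacle is choosing the cycle in the first step. A reticulation vertex on $P_{a,b}$ or $P_{c,d}$ would collide with $v_1$ or $v_2$ and would not give three distinct degree-$2$ branch vertices in the re-routing. The cycle $C$ avoids both subdivided edges, which resolves this, and the remaining work is checking internal disjointness in each symmetric case.
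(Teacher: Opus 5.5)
Your proposal is correct and is essentially the paper's proof: after relabelling the paper's $t_1,t_2,t_3,t_4$ as your $a,c,b,d$, the paper uses the same cycle avoiding the two subdivided canonical paths, places $r$ on $P_{1,2}$, discards $P_{3,4}$, and obtains the same three internally disjoint paths between $t_1$ and $t_2$. The differences are cosmetic. The paper applies Lemma~\ref{lmm:cut-visible-basic}(iv) to $H$ and then (ii) to pass to $H'$, whereas you apply (ii) first and then (iv). You also explicitly justify the symmetry behind the WLOG placement of $r$, which the paper leaves implicit.
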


\begin{proof}
Suppose $\overline{N}$ contains a pivotal subdivision $H$ of $K_{4:2}$, with four degree-$3$ branch vertices $t_1,t_2,t_3,t_4$ (which are necessarily tree vertices by Lemma~\ref{lmm:deg3-tree}) and two degree-$2$ branch vertices $v_1,v_2$ such that $\{v_1,v_2\}$ is a pivotal basis of $H$ in $\overline{N}$. 
Let $P_{i,j}$ denote the canonical path corresponding to the edge $\{t_i,t_j\}$ of $K_{4}$, where $v_i$ is an internal vertex of $P_{i,i+2}$ for $1\le i\le 2$.

Consider the cycle $C=P_{1,2}\cup P_{2,3}\cup P_{3,4}\cup P_{1,4}$ in $H$. As the cycle $C$ must contain a reticulation vertex $r$ by Lemma~\ref{lmm:cycle}, without loss of generality, we may assume that $r$ lies on $P_{1,2}$. Thus, we may view $H$ as a subdivision of the graph obtained from $K_4$ by subdividing once each of the three edges $\{t_1,t_3\}$, $\{t_1,t_2\}$, and $\{t_2,t_4\}$, where the corresponding degree-$2$ branch vertices are $v_1,r,v_2$, respectively. By
Lemma~\ref{lmm:cut-visible-basic}(iv), $\{r,v_1,v_2\}$ is a pivotal basis of $H$ in $\overline{N}$.

Removing the edges of $P_{3,4}$ in $H$ leaves three internally disjoint paths between $t_1$ and $t_2$: $Q_1=P_{1,2}$ with $r$, $Q_2=P_{1,3}\cup P_{2,3}$ with $v_1$, and $Q_3=P_{1,4}\cup P_{2,4}$ with $v_2$. Their union $H'=Q_1 \cup Q_2 \cup Q_3$ is a subdivision of $K_{2:3}$ in $\overline{N}$, with the two degree-$3$ branch vertices in set $A$ corresponding to $t_1,t_2$ and the three degree-$2$ branch vertices in set $B$ corresponding to $r,v_1,v_2$.
Since $\{r,v_1,v_2\}\subseteq V(H')$ and $H'$ is a subgraph of $H$, Lemma~\ref{lmm:cut-visible-basic}(ii) implies that $\{r,v_1,v_2\}$ is a pivotal basis of $H'$ in $\overline{N}$. Hence $H'$ is a pivotal subdivision of $K_{2:3}$ in $\overline{N}$.
\end{proof}

\begin{lemma}\label{lem:k33_to_k23}
Let $N$ be a galled network. If $\overline{N}$ contains a subdivision of $K_{3,3}$, then $\overline{N}$ contains a pivotal subdivision of $K_{2:3}$.
\end{lemma}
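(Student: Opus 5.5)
Let $H$ be a subdivision of $K_{3,3}$ in $\overline{N}$ with branch vertices $a_1,a_2,a_3$ on one side and $b_1,b_2,b_3$ on the other. Let $P_{i,j}$ be the canonical path between $a_i$ and $b_j$. All six branch vertices have degree $3$ in the $2$-connected graph $H$, so they are tree vertices by Lemma~\ref{lmm:deg3-tree}. The plan is to locate three reticulation vertices of $H$ that lie on three internally disjoint paths between two degree-$3$ vertices of a $K_{2,3}$-subdivision inside $H$. By Lemma~\ref{lmm:cut-visible-basic}(iv), applied to the empty basis, these reticulation vertices automatically form a pivotal basis. The degenerate rays at distinct reticulation vertices are pairwise disjoint, since each reticulation vertex is a cut vertex by Lemma~\ref{lmm:retcutvertex}. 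Lemma~\ref{lmm:cut-visible-basic}(ii) then lets us restrict this basis to the $K_{2,3}$-subdivision.

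First, apply Lemma~\ref{lmm:cycle} to the $4$-cycles $C_{ij,kl}=P_{i,k}\cup P_{k,j}\cup P_{j,l}\cup P_{l,i}$ of $H$. Each of these nine cycles must contain a reticulation vertex. Every reticulation vertex lies in the interior of a canonical path, since branch vertices are tree vertices. So the set $R$ of canonical paths carrying a reticulation vertex meets every $4$-cycle of $K_{3,3}$. Equivalently, the tree-path edges $T=E(K_{3,3})\setminus R$ form a forest. Because $K_{3,3}$ has no triangles, a forest of tree edges is the only constraint, and the complement $R$ then has at least $9-5=4$ edges.

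Second, fix $a_1,a_2$ as the two degree-$3$ vertices of the target subdivision. The paths $Q_j=P_{1,j}\cup P_{j,2}$ for $j=1,2,3$ are internally disjoint paths between $a_1$ and $a_2$. This gives a $K_{2,3}$-subdivision, and it is pivotal if each $Q_j$ contains a reticulation vertex, i.e.\ if for every $j$ at least one of the edges $\{a_1,b_j\}$ and $\{a_2,b_j\}$ lies in $R$. The same holds symmetrically for any pair $\{a_i,a_{i'}\}$ or $\{b_j,b_{j'}\}$.

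So it suffices to show the following combinatorial claim: if $T$ is a forest in $K_{3,3}$, then some pair $x,y$ on one side has no common neighbour in $T$. Suppose not. Then each of the three pairs among $a_1,a_2,a_3$ has a common $T$-neighbour $b$. Two of these pairs cannot share the same common neighbour: pairs $\{a_1,a_2\}$ and $\{a_1,a_3\}$ both using $b$ is consistent, but then consider the $b$-side pairs as well. I expect this case analysis to be the main obstacle.

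The cleanest route is counting. A common neighbour $b_j$ of a pair $\{a_i,a_{i'}\}$ in $T$ is a path of length $2$ centred at $b_j$. If all three $a$-pairs and all three $b$-pairs are covered, then $\sum_j\binom{d_T(b_j)}{2}\ge 3$ and $\sum_i\binom{d_T(a_i)}{2}\ge 3$. A forest on $6$ vertices has at most $5$ edges. Checking degree sequences with degree sum at most $5$ on each side shows that both inequalities force a cycle. For instance, on one side covering all three pairs without a degree-$3$ vertex needs degrees $(2,2,2)$ with distinct pairs, and this already forms a $6$-cycle together with the other side. With a degree-$3$ vertex $b$, the other side needs $\binom{\cdot}{2}$-sum $\ge3$ using at most $2$ further edges, which is impossible.

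I would carry out this finite check carefully. If it proves messy, I would fall back to choosing directly a $4$-cycle and a reticulation vertex on it and rerouting, as in Lemma~\ref{lem:k42_to_k23}. The resulting $K_{2,3}$-subdivision with its three reticulation vertices is then a pivotal subdivision of $K_{2:3}$.
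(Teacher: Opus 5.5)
Your opening steps match the paper's proof: at least four canonical paths carry a reticulation vertex, and a same-side pair whose three two-edge connections each carry one gives the required pivotal subdivision. However, the combinatorial claim you reduce everything to is false.

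Take $T=\{a_3b_1,a_3b_2,a_3b_3,a_1b_3,a_2b_3\}$, a double star with central edge $a_3b_3$, so that $R=\{a_1b_1,a_1b_2,a_2b_1,a_2b_2\}$. Then $T$ is a tree, yet every pair of $a$'s has the common $T$-neighbour $b_3$, and every pair of $b$'s has the common $T$-neighbour $a_3$. Hence, for every choice of pair, neither $\bigcup_j(P_{i,j}\cup P_{i',j})$ nor $\bigcup_i(P_{i,j}\cup P_{i,j'})$ has a reticulation vertex on each of its three paths. Your count misses this because the three edges at a $b$ of $T$-degree $3$ also count towards the degrees on the $a$-side. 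Two further edges at a single $a_i$ already give the $a$-degree sequence $(3,1,1)$, and $\binom{3}{2}=3$.

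The galled hypothesis does not rule this case out. Take $\rho\to s$ with $s$ subdividing $P_{3,3}$, and edges $s\to a_3$, $s\to b_3$, $b_3\to a_1$, $b_3\to a_2$, $a_3\to b_1$, $a_3\to b_2$. Add one reticulation vertex with a leaf child subdividing each of $P_{1,1},P_{1,2},P_{2,1},P_{2,2}$. This is a galled network (all four galled cycles have top vertex $s$) realising exactly this marking.

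A smaller slip: ``$R$ meets every $4$-cycle'' is not equivalent to ``$T$ is a forest'', since a Hamiltonian $6$-cycle of $K_{3,3}$ contains no $4$-cycle. You must apply Lemma~\ref{lmm:cycle} to all cycles of $H$; that does give a forest and $|R|\ge 4$.

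Carried out correctly, your count shows this double star is the only exception. Giving each of the three $a$-pairs a common $T$-neighbour with at most five edges forces some $b_l$ of $T$-degree $3$; otherwise three vertices of degree $2$ are needed, i.e.\ six edges. Symmetrically, some $a_k$ has $T$-degree $3$. These two stars together already have five edges, so $T$ is the double star, and after relabelling the marked paths are exactly $P_{1,1},P_{1,2},P_{2,1},P_{2,2}$. The paper reaches the same reduction by a pigeonhole argument.

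The missing idea is that in this case the two degree-$3$ branch vertices of the $K_{2,3}$ must lie on \emph{opposite} sides of the $K_{3,3}$, as in the paper's Case~2. The paths $P_{2,2}$, $P_{2,1}\cup P_{3,1}\cup P_{3,2}$ and $P_{2,3}\cup P_{1,3}\cup P_{1,2}$ are three internally disjoint paths between $a_2$ and $b_2$. They contain the marked paths $P_{2,2}$, $P_{2,1}$ and $P_{1,2}$ respectively, and your pivotal-basis argument then finishes the proof. Your fallback of rerouting ``as in Lemma~\ref{lem:k42_to_k23}'' is too vague to stand in for this step.
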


\begin{proof}
Suppose $\overline{N}$ contains a subdivision $H$ of $K_{3,3}$ with branch vertices $p_1,p_2,p_3$ and $q_1,q_2,q_3$ corresponding to the two bipartition classes, and let $P_{i,j}$ be the canonical path corresponding to the edge $\{p_i,q_j\}$ in $K_{3,3}$. Lemma~\ref{lmm:deg3-tree} shows that every branch vertex in $H$ is a tree vertex; consequently any reticulation vertex lying on a canonical path $P_{i,j}$ is an internal vertex.

Call a canonical path $P_{i,j}$ marked if it contains a reticulation vertex. We claim that at least four canonical paths in $H$ are marked. Suppose not, so at most three canonical paths are marked. Removing the corresponding edges from $K_{3,3}$, while keeping all six vertices, leaves a graph with six vertices and at least six edges. Hence this graph contains a cycle, since any acyclic graph on six vertices has at most five edges. Replacing the edges of this cycle by the corresponding canonical paths in $H$ yields a cycle containing no reticulation vertex, contradicting Lemma~\ref{lmm:cycle}.

Say that a pair $\{a,b\}$ of branch vertices on one side covers the other side if, for every branch vertex $v$ on the other side, some marked canonical path has $v$ as one endpoint and a vertex of $\{a,b\}$ as the other.

\smallskip
\noindent\emph{Case 1: some pair covers the other side.}

Say $\{p_1,p_2\}$ covers the $q$-side; by definition, for each $j\in\{1,2,3\}$ at least one of $P_{1,j},P_{2,j}$ is marked. Then $Q_j:=P_{1,j}\cup P_{2,j}$ ($j=1,2,3$) are three internally disjoint paths between $p_1$ and $p_2$, each containing a reticulation vertex.

\smallskip
\noindent\emph{Case 2: no pair covers the other side.}

In this case, we show that the marked paths have endpoints among only two of $p_1,p_2,p_3$ and only two of $q_1,q_2,q_3$. Suppose instead that $p_1,p_2,p_3$ are all endpoints of marked canonical paths. Each marked canonical path has a unique endpoint among $q_1,q_2,q_3$; as there are at least four such paths, some $q_j$, say $q_1$, is an endpoint of at least two marked paths, two of which have distinct other endpoints, say $p_1$ and $p_2$, so $P_{1,1}$ and $P_{2,1}$ are marked. Let $P_{3,i}$ be a marked path with endpoint $p_3$. If $i\neq 1$ then the pair $\{q_1,q_i\}$ covers the $p$-side; if $i=1$ then $P_{1,1},P_{2,1},P_{3,1}$ are marked, hence $\{q_1,q_j\}$ covers the $p$-side for any $j \neq 1$. Either contradicts the hypothesis, proving the claim. Symmetrically, only two of $q_1,q_2,q_3$ are endpoints of marked paths. Hence the marked paths lie among the four canonical paths between two $p$'s and two $q$'s; relabelling if necessary, they are $P_{1,1},P_{1,2},P_{2,1},P_{2,2}$. Set $Q_1=P_{2,2}$, $Q_2=P_{2,1}\cup P_{3,1}\cup P_{3,2}$, and
$Q_3=P_{2,3}\cup P_{1,3}\cup P_{1,2}$. Then $Q_1,Q_2,Q_3$ are three internally disjoint paths between $p_2$ and $q_2$, containing the marked paths $P_{2,2},P_{2,1},P_{1,2}$, respectively.

\smallskip
In either case $H'=Q_1\cup Q_2\cup Q_3$ is a subdivision of $K_{2,3}$ in which each of $Q_i$ contains a reticulation vertex. For each $i$, choose a reticulation vertex $r_i$ that is an internal vertex on $Q_i$, so that $r_1,r_2,r_3$ are distinct degree-$2$ branch vertices of $H'$. By Lemma~\ref{lmm:retcutvertex} each $r_i$ is a cut vertex, hence
$\{r_1,r_2,r_3\}$ is a pivotal basis of $H'$ in $\overline{N}$, and $\overline{N}$ contains a pivotal subdivision of $K_{2:3}$.
\end{proof}

The three forbidden structures in Theorem~\ref{thm:terminal_forbid-te} can be reduced to a single one for galled networks.

\begin{lemma}\label{lem:no_s2}
A galled network $N$ is terminal planar if and only if $\overline{N}$ contains no pivotal subdivision of ${K}_{2:3}$.

\end{lemma}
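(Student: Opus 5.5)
This follows directly from Theorem~\ref{thm:terminal_forbid-te} together with Lemmas~\ref{lem:k42_to_k23} and~\ref{lem:k33_to_k23}. By Theorem~\ref{thm:terminal_forbid-te}, $N$ is terminal planar exactly when $\overline{N}$ avoids all three structures: a subdivision of $K_{3,3}$, a pivotal subdivision of $K_{2:3}$, and a pivotal subdivision of $K_{4:2}$. The plan is to show that, for galled networks, avoiding $K_{2:3}$ already forces avoidance of the other two. Each direction of the equivalence is then a one-line deduction.

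For the forward direction, assume $N$ is terminal planar. Theorem~\ref{thm:terminal_forbid-te} then says that $\overline{N}$ contains none of the three structures. In particular, it contains no pivotal subdivision of $K_{2:3}$.

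For the converse, assume $\overline{N}$ contains no pivotal subdivision of $K_{2:3}$. I argue by contraposition on the remaining two structures. If $\overline{N}$ contained a subdivision of $K_{3,3}$, Lemma~\ref{lem:k33_to_k23} would produce a pivotal subdivision of $K_{2:3}$, which is a contradiction. Similarly, if $\overline{N}$ contained a pivotal subdivision of $K_{4:2}$, Lemma~\ref{lem:k42_to_k23} would produce a pivotal subdivision of $K_{2:3}$, again a contradiction. So $\overline{N}$ avoids all three structures, and Theorem~\ref{thm:terminal_forbid-te} gives that $N$ is terminal planar.

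There is no real obstacle in this argument, since all the combinatorial work sits in the two preceding reduction lemmas and in Theorem~\ref{thm:terminal_forbid-te}, which is proved in the Appendix. The only point to check is that those lemmas are stated for the underlying graph $\overline{N}$ of a galled network and use the same notion of pivotal basis, so their conclusions plug directly into the hypotheses of Theorem~\ref{thm:terminal_forbid-te}.
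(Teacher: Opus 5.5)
Your proposal is correct and is the paper's own argument. Both directions follow from Theorem~\ref{thm:terminal_forbid-te}, with Lemmas~\ref{lem:k42_to_k23} and~\ref{lem:k33_to_k23} showing that, for galled networks, having no pivotal subdivision of $K_{2:3}$ rules out the $K_{3,3}$ and $K_{4:2}$ structures as well.
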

\begin{proof}
By Theorem~\ref{thm:terminal_forbid-te} and Lemmas~\ref{lem:k42_to_k23} and~\ref{lem:k33_to_k23}, the absence of a  pivotal subdivision of $K_{2:3}$ in $\overline{N}$ implies the absence of a pivotal subdivision of $K_{4:2}$ and a subdivision of $K_{3,3}$. Hence the conclusion follows.
\end{proof}

Recalling from Section~\ref{sec:outer} the definition of a normal subdivision of $K_{2,3}$, we say $H$ is a \emph{normal pivotal subdivision} of $K_{2:3}$ in a graph $G$ if $H$ is both a normal subdivision of $K_{2,3}$ and a pivotal subdivision of $K_{2:3}$. As in Section~\ref{sec:outer}, we provide a stronger version of Lemma~\ref{lem:no_s2}.

\begin{theorem}\label{thm:normal-Kk23}
Suppose $N$ is a galled network. Then $N$ is terminal planar if and only if $\overline{N}$ contains no normal pivotal subdivision of $K_{2:3}$.
\end{theorem}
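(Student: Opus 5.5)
By Lemma~\ref{lem:no_s2}, it suffices to show one thing: if $\overline{N}$ contains a pivotal subdivision $H$ of $K_{2:3}$, then it also contains a normal one. The converse direction is trivial, since a normal pivotal subdivision is in particular a pivotal subdivision. So I will mimic the proof of Theorem~\ref{thm:normalk23}, but keep track of the pivotal basis throughout.

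Setup: let $t_1,t_2$ be the degree-$3$ branch vertices of $H$, and let $P_1,P_2,P_3$ be the three canonical paths between them. Each $P_i$ carries a degree-$2$ branch vertex $b_i$, and $\{b_1,b_2,b_3\}$ is a pivotal basis via rays $R_1,R_2,R_3$. If $H$ is already normal we are done, so assume each $P_i$ contains a reticulation vertex $r_i$ (internal, since $t_1,t_2$ are tree vertices by Lemma~\ref{lmm:deg3-tree}). By Lemma~\ref{lmm:retcutvertex} each $r_i$ is a cut vertex. I would then replace $b_i$ by $r_i$: the degenerate rays at the $r_i$ are pairwise disjoint and meet any subgraph containing them only at themselves. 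Hence for any subdivision $H'$ of $K_{2,3}$ whose degree-$2$ branch vertices can be chosen among reticulation vertices of $H'$, the pivotal condition is automatic. This is the key simplification: the original rays $R_i$ play no further role.

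Construction: by Corollary~\ref{cor:treepath-2-connected} there is a unique tree path $P$ from $t_1$ to $t_2$, with first edge on $P_1$, say. First suppose $P$ is internally disjoint from $P_2\cup P_3$. Then $P\cup P_2\cup P_3$ is a normal subdivision, and its degree-$2$ branch vertices can be taken to be $r_2$, $r_3$, and any internal vertex $v$ of $P$. Otherwise, as in Theorem~\ref{thm:normalk23}, take the first internal vertex $u$ of $P$ lying on $P_2\cup P_3$, say on $P_3$. This gives $Q_1=P[t_1,u]$, $Q_2=P_2\cup P_3[t_2,u]$ and $Q_3=P_3[t_1,u]$. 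Here $Q_2$ contains $r_2$, and $Q_1$ is a tree path, but $Q_3$ might not contain $r_3$. If $r_3$ lies on $P_3[u,t_2]$, I would instead route through $t_2$. That is, use $Q_2'=P_2\cup P_3[t_1,u]$ and $Q_3'=P_3[u,t_2]$, paired with the tree path $P[u,t_2]$ if that path avoids $P_2\cup P_3$ internally; otherwise iterate along $P$. Choosing $u$ extremal on $P$ (first and last hits) should ensure that one of the two non-tree paths through $u$ picks up each remaining reticulation vertex.

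Main obstacle: the degree-$2$ vertex on the tree path $Q_1$. Its pivotal ray cannot be a reticulation vertex, so I would have to show it is cut-visible from $H'$, with a ray disjoint from the reticulation vertices used on the other two paths. My plan is to pick $v$ as an internal vertex of the tree path adjacent to an endpoint, and to use the structure from Theorem~\ref{thm:outerplanar-forbidden} together with Lemma~\ref{lmm:overlap-path-property}. Applying Theorem~\ref{cycleandgalledcycle} to the two cycles $Q_1\cup Q_2$ and $Q_1\cup Q_3$ gives two galled cycles through $v$. Then $v$ is either primitive, in which case Lemma~\ref{lmm:overlap-path-property}(ii) yields a ray internally disjoint from $G_1\cup G_2$, or it lies in three galled cycles, in which case the third neighbour's path through a new reticulation vertex yields the ray. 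The difficulty is ensuring this ray avoids $H'$ rather than just $G_1\cup G_2$. If it does not, I would reroute: re-choose $H'$ inside $G_1\cup G_2\cup(\text{ray})$, taking the intersecting path as the tree path and the two non-shared galled-cycle paths (with $r_1,r_2$) as the other canonical paths. In that subgraph the ray meets $H'$ only at $v$, and distinctness of the three rays follows because $r_1,r_2$ are degenerate and lie off the ray.
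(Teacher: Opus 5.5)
The gap is in your ``main obstacle'' paragraph, and it is more than a technical difficulty. The argument there is local at $v$ and never uses how $H'$ was obtained from $H$, and its conclusion fails in general.

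Your primitive case is fine: it is the $(\Rightarrow)$ argument of Theorem~\ref{noter2gall}, provided you take the pair of galled cycles that witnesses primitivity rather than the pair produced by Theorem~\ref{cycleandgalledcycle} (cf.\ Remark~\ref{rmk:edgeambig}). The other case does not work.
\begin{itemize}
\item The path leaving $v$ through its third edge may be a tree path that lands on $H'$, and the reticulation vertex of the third galled cycle may lie on $H'$ itself. This is exactly case~(b) in the proof of Theorem~\ref{thm:outerplanar-forbidden}, and then no ray is produced.
\item Your reroute needs $v$ to be an internal vertex of the intersecting path of two galled cycles, i.e.\ primitive. That fails when $v$ is separable.
\end{itemize}
Concretely, take $D_3$ with the notation of Case~(ii) in the proof of Theorem~\ref{outer forbidden}. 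The tree path $(v_1,v,v_2)$, the non-shared path of $G_3$, and the concatenation of the non-shared paths of $G_2$ and $G_1$ through $v_3$ form a normal subdivision of $K_{2,3}$. In it, $v$ is adjacent to both endpoints of the tree path, is separable, and is not a cut vertex. It is also not cut-visible, since its remaining edge goes directly to $v_3$, which lies on the subdivision.

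The missing idea is to use the path $P_1$ that your construction discards. It contains a reticulation vertex, and apart from its overlap with the tree path $P$ it lies outside $H'$, because it is internally disjoint from $P_2\cup P_3$. The paper therefore places the degree-$2$ branch vertex on the tree path at a point where $P$ leaves $P_1$.
\begin{itemize}
\item In the first case it is a vertex $w_p$ from which a segment of $P_1$ returns to $P$ with no internal vertex on $P$. Such a segment must contain a reticulation vertex, since otherwise it closes a cycle of tree vertices with $P$, contradicting Lemma~\ref{lmm:cycle}.
\item In the second case it is the last vertex $w_i$ of $P[t_1,u]$ lying on $P_1$, together with the maximal segment of $P_1$ leaving $w_i$ along an edge not on $P$.
\end{itemize}
The initial part of that segment, up to its first cut vertex, is a pivotal ray meeting $H'$ only at the chosen vertex. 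It is disjoint from the degenerate rays at the reticulation vertices on the other two canonical paths.

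Separately, your worry that $Q_3$ might miss $r_3$, and the rerouting through $t_2$, are unnecessary. $Q_1\cup Q_3$ is a cycle and $Q_1$ is a tree path, so $Q_3$ contains some reticulation vertex by Lemma~\ref{lmm:cycle}, and any reticulation vertex serves as a degenerate ray.
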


\begin{proof}
By Lemma~\ref{lem:no_s2}, it is sufficient to prove that if $\overline{N}$ contains a pivotal subdivision $H$ of $K_{2:3}$ as a subgraph, then  $\overline{N}$ contains a normal pivotal subdivision of $K_{2:3}$.
We use the notation from the proof of Theorem~\ref{thm:normalk23}. In particular, $t_1$ and $t_2$ are tree vertices, and we may assume that $H$ is not normal and $w_1=v_1$.

If $P$ is internally disjoint from $P_2$ and $P_3$, then $H'=P \cup P_2 \cup P_3$ is a normal subdivision of $K_{2,3}$. 
Choose indices $p<q$ such that $w_p,w_q\in V(P_1)$, no vertex $w_k$ with $p<k<q$ lies on $P_1$, and $P_1[w_p,w_q]$ contains a reticulation vertex which is a cut vertex in $\overline{N}$; otherwise $P_1[w_p,w_q] \cup P[w_p,w_q] $ would form a cycle consisting entirely of tree vertices, contradicting Lemma~\ref{lmm:cycle}.
Let $v_c$ be the closest cut vertex to $w_p$ on $P_1[w_p,w_q]$. Then $P_1[w_p, v_c] \cap V(H')=\{w_p\}$ and $v_c$ is the only cut vertex of $P_1[w_p, v_c]$, so $w_p$ is cut-visible from $H'$ in $\overline{N}$. By Lemma~\ref{lmm:cut-visible-basic}(iv), we obtain that $\{w_p, r_2, r_3\}$ is a pivotal basis of $H'$, where $r_2$ and $r_3$ are the reticulation vertices on $P_2$ and $P_3$, respectively. Hence $H'$ is a normal pivotal subdivision of $K_{2:3}$.

It remains to consider the case that $P$ has an internal vertex on $P_2\cup P_3$. Let $u=w_j$, $Q_1$, $Q_2$, and $Q_3$ be defined as in the proof of Theorem~\ref{thm:normalk23}. Let $i<j$ be maximal such that $w_i\in V(P_1)$. Let $e$ be the edge of $P_1$ incident with $w_i$ and not contained in $P[t_1,w_i]$. Such $e$ exists, otherwise the path $P$ is self-intersecting. Let $P_1[w_i,w_h]$ be the maximal subpath of $P_1$ containing $e$ whose internal vertices do not lie on $P$. Note that $w_i$, $w_h$, and $u$ are all tree vertices by Lemma~\ref{lmm:deg3-tree}, and each of $P_1[w_i,w_h]$ and $Q_3$ contains a reticulation vertex as an internal vertex, say $r'_1$ and $r'_3$, respectively. Otherwise, either $P_1[w_i,w_h]\cup P[w_i,w_h]$ or $Q_3\cup P[t_1,u]$ would form a cycle consisting entirely of tree vertices. Arguing as in the previous case, we see that $\{w_i, r_2, r'_3\}$ is a pivotal basis of $H'$, where $r_2$ is a reticulation vertex on $P_2$. Hence $H'$ is a normal pivotal subdivision of $K_{2:3}$.
\end{proof}

Next, we turn to the characterisation in terms of a forbidden vertex configuration. We shall use the following form of the Fan Lemma.

\begin{lemma}[Fan Lemma, Dirac 1960 \cite{Dirac1960}]\label{lmm:fan}
Let $G$ be a $k$-connected graph, and let
$v_0,v_1, \ldots, v_k$ be $k+1$ distinct vertices in $V(G)$. Then there exist $k$ paths $P_i$ between $v_0$ and $v_i$ for $1\le i \le k$ such that $V(P_i)\cap V(P_j)=\{v_0\}$ holds for $1\le i <j\le k$.
\end{lemma}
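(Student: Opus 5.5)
This is Dirac's Fan Lemma, which I will derive from Menger's theorem (vertex form) by the usual auxiliary-vertex construction. Build $G'$ from $G$ by adding a new vertex $z$ adjacent to exactly $v_1,\dots,v_k$. The aim is $k$ paths from $v_0$ to $z$ in $G'$ that pairwise meet only in $v_0$ and $z$. Deleting $z$ from each then gives paths from $v_0$ to distinct neighbours of $z$, and these neighbours must be exactly $v_1,\dots,v_k$.

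The key step is to check that $G'$ has no $v_0$--$z$ separator $S\subseteq V(G')\setminus\{v_0,z\}$ with $|S|<k$. Take such an $S$, so $S\subseteq V(G)\setminus\{v_0\}$. Since $|S|<k$, some $v_i$ with $i\ge1$ is not in $S$. Since $G$ is $k$-connected, $G-S$ is connected and so contains a path from $v_0$ to $v_i$. Appending the edge $\{v_i,z\}$ gives a $v_0$--$z$ path in $G'-S$. Also, $v_0$ and $z$ are non-adjacent in $G'$, so Menger's theorem applies. It yields $k$ internally disjoint $v_0$--$z$ paths $R_1,\dots,R_k$ in $G'$.

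Each $R_j$ ends with an edge $\{u_j,z\}$ where $u_j\in\{v_1,\dots,v_k\}$. Internal disjointness forces the $u_j$ to be pairwise distinct, so after relabelling $u_j=v_j$. Let $P_j:=R_j-z$. This is a path in $G$ from $v_0$ to $v_j$, and $V(P_i)\cap V(P_j)=\{v_0\}$ for $i\ne j$.

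One subtlety remains: a path $P_i$ might pass through another target $v_j$ as an internal vertex. That cannot happen, because $v_j$ already lies on $R_j$, and $R_i$, $R_j$ are internally disjoint. The only delicate point is the separator count, and it is routine. It uses the convention from the paper that $k$-connected graphs have more than $k$ vertices, which guarantees that $G-S$ is connected and nonempty.
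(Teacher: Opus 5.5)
Your proof is correct. You add a vertex $z$ adjacent to $v_1,\dots,v_k$ and show that every $v_0$--$z$ separator in $G'$ has size at least $k$, since $G-S$ is connected and some $v_i\notin S$. You then apply Menger's theorem to the non-adjacent pair $v_0,z$. The penultimate vertices of the $k$ internally disjoint paths are distinct internal vertices, so they exhaust $\{v_1,\dots,v_k\}$, and this yields exactly the required fan.

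The paper gives no proof of its own here: it quotes the result from Dirac~\cite{Dirac1960} and uses it only with $k=2$ in the proof of Lemma~\ref{lmm:cutvis-block}. Your derivation is the standard reduction to Menger's theorem, so it fills in the citation rather than offering a competing route. Your closing ``subtlety'' needs no separate treatment, since it already follows from $V(P_i)\cap V(P_j)=\{v_0\}$.
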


\begin{lemma}\label{lmm:cutvis-block}
Let $H$ be a biconnected subgraph of a graph $G$ with $|V(H)|\ge 2$. 
Suppose $R=(v_0,v_1,\dots,v_k)$ is a pivotal ray of $H$ in $G$ with $k\ge 1$, and denote the block of $G$ containing $H$ by $B$.
Then $v_0$ is not a cut vertex of $G$, and the ray $R$ is contained in $B$ (i.e., $V(R)\subsetneq V(B)$). 
Furthermore, there exist a vertex $u\in V(H)\setminus\{v_0\}$ and a path $P$ between $v_0$ and $u$ such that $v_k\in V(P)$ and $V(P)\cap V(H)=\{v_0,u\}$.

\end{lemma}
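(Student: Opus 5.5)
The plan is to exploit the definition of a pivotal ray: $R$ meets $H$ only in $v_0$, and $v_k$ is the only cut vertex of $G$ on $R$. Since $k\ge 1$, we have $v_0\ne v_k$, so $v_0$ is not a cut vertex of $G$; this settles the first claim. The key steps are, in order: (1) show that removing $v_k$ does not separate $v_0$ from $H - v_0$; (2) build the path $P$ by following $R$ out to $v_k$ and returning to $H$; (3) deduce from the cycle so formed that $R$ lies in the block $B$.

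\emph{Step (1).} Since $|V(H)|\ge 2$ and $H$ is biconnected, $v_0$ has a neighbour $u'$ in $H$. Consider $G-v_k$. I claim $v_0$ and $u'$ lie in the same component of $G - v_k$, which is immediate since $\{v_0,u'\}$ is an edge and $v_k\notin V(H)$. Now use that $v_k$ is a cut vertex of $G$. Let $D$ be the component of $G-v_k$ containing $v_0$, hence containing $H$, since $H$ is connected and avoids $v_k$. The subpath $(v_0,\dots,v_{k-1})$ lies in $D$, and there is another component $D'$ of $G-v_k$ adjacent to $v_k$. This alone does not close a cycle, so instead argue as follows. The internal vertices $v_1,\dots,v_{k-1}$ are not cut vertices of $G$. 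Take the largest index $i<k$ such that the subpath $R[v_0,v_i]$ together with $H$ lies in a common biconnected subgraph; I want $i=k-1$ and moreover that $v_k$ joins in as well. For each internal $v_j$ ($1\le j\le k-1$), $G-v_j$ is connected, so there is a path from $v_{j+1}$ back to $H\cup R[v_0,v_{j-1}]$ avoiding $v_j$. An inductive \lq\lq ear\rq\rq{} argument then shows that $H\cup R[v_0,v_j]$ lies in a single block for each $j\le k$. The final step $j=k$ uses that $v_{k-1}$ is not a cut vertex, which yields a path from $v_k$ to $H\cup R[v_0,v_{k-2}]$ avoiding $v_{k-1}$.

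\emph{Steps (2) and (3).} Once $H\cup R$ lies in one block $B$, the block $B$ is 2-connected, because $|V(B)|\ge 3$. I will then apply the Fan Lemma (Lemma~\ref{lmm:fan}) with $k=2$ in $B$, or more simply Menger's theorem in $B$ between $v_k$ and $V(H)$. This gives two paths from $v_k$ to $H$ that share only $v_k$ and end at distinct vertices of $H$. I will arrange, by rerouting through $R$, that one of them is $R$ itself: take a path $S$ in $B-v_0$ from $v_k$ to $H - v_0$, which exists since $v_0$ is not a cut vertex of $B$. Let $x$ be the last vertex of $S$ on $R$, going from $H$ backwards. Then $R[v_0,x]$ followed by $S[x,u]$, where $u$ is the first vertex of $H$ on $S$ after $x$, is the required path $P$ with $V(P)\cap V(H)=\{v_0,u\}$. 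To guarantee $v_k\in V(P)$, I instead pick $S$ starting at $v_k$ and choose $u$ as the first $H$-vertex reached by $S$. If $S$ re-meets $R$ before reaching $H$, I splice at the last such meeting, using the segment of $R$ up to $v_k$ and then $S$ onwards. Strict containment $V(R)\subsetneq V(B)$ is clear because $H$ has a vertex other than $v_0$ outside $R$.

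The main obstacle is the inductive block argument in Step (1): making rigorous that the absence of cut vertices among $v_0,\dots,v_{k-1}$ forces $v_k$ into the same block as $H$. The difficulty is that a non-cut vertex $v_j$ of $G$ need not be a non-cut vertex within the relevant subgraph. I would first try the block–cut tree. Blocks containing consecutive edges of $R$ meet at $v_j$, so if they were different blocks then $v_j$ would be a cut vertex. Hence all edges of $R$, together with an edge of $H$ at $v_0$, lie in one block, because $v_0$ is also not a cut vertex. This is cleaner, and I would adopt it in place of the ear argument.
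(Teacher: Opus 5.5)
\textbf{The first two claims are fine; the construction of $P$ has a gap.} Your argument that $v_0$ is not a cut vertex is correct, and so is the block argument you settle on for $V(R)\subsetneq V(B)$: a vertex lying in two distinct blocks is a cut vertex. This is exactly the paper's reasoning, and the ``ear'' digression can simply be dropped.

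The gap is in building $P$. You invoke the Fan Lemma, but then discard the second fan path and work only with $R$ and a single path $S$ from $v_k$ to $V(H)\setminus\{v_0\}$ in $B-v_0$. Splicing at the last vertex $x$ of $S$ on $R$ gives $R[v_0,x]\cup S[x,u]$. This misses $v_k$ whenever $x\neq v_k$. The alternative, ``$R$ up to $v_k$ and then $S$ onwards'', is not a path once $S$ revisits $R$.

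In fact no construction inside $R\cup S$ can work in general. Let $H$ be a triangle on $v_0,u,h$ and $R=(v_0,v_1,v_2)$. Add vertices $a,b,c$ with edges $v_2a$, $av_0$, $v_1b$, $bu$, $v_2c$. The only cut vertex of $G$ is $v_2$, so $R$ is a pivotal ray. In $B-v_0$ the vertex $a$ is a dead end, so the only possible $S$ is $(v_2,v_1,b,u)$. Then $v_2$ is a leaf of $R\cup S$, and no $v_0$--$u$ path in $R\cup S$ can pass through it. The required path $(v_0,a,v_2,v_1,b,u)$ enters and leaves $v_2$ along two different branches, which your construction never sees.

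The missing idea is to keep both fan paths. Truncate them at their first vertex in $H$ to get $Q_1,Q_2$ from $v_k$. These share only $v_k$, end at distinct $w_1\neq w_2$, and meet $H$ only at their ends.
\begin{itemize}
\item If $v_0\in\{w_1,w_2\}$, take $P=Q_1\cup Q_2$.
\item Otherwise, walk along $R$ starting from $v_0$ to the first vertex $v_j$ ($j\ge 1$) lying on $Q_1\cup Q_2$, say on $Q_1$. Such a $j$ exists because $v_k$ lies on both paths. Take $P=R[v_0,v_j]\cup Q_1[v_j,v_k]\cup Q_2[v_k,w_2]$.
\end{itemize}
Minimality of $j$, together with $V(Q_i)\cap V(H)=\{w_i\}$, makes this a path meeting $H$ only in $\{v_0,w_2\}$. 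It is $Q_2$ that carries $P$ from $v_k$ back to $H$. This is the paper's argument.
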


\begin{proof}
Since $R$ is non-degenerate, it follows that $v_i$ is not a cut vertex for $0\le i \le k-1$. Hence all edges incident to $v_i$ belong to the same block (see Diestel~\cite{Diestel}, Section~3.1) for $0\le i \le k-1$, which implies that $V(R)\subset V(B)$.

Consider two distinct vertices $u_1,u_2$ of $H$. Applying the Fan Lemma (Lemma~\ref{lmm:fan}) to the vertices $v_k,u_1,u_2$ in the block $B$ shows that there exist two internally disjoint paths $P_1$ and $P_2$ from $v_k$ to $u_1$ and $u_2$, respectively.
For $i=1,2$, let $w_i$ be the first vertex of $P_i$ belonging to $V(H)$ (starting from $v_k$) and put $Q_i=P_i[v_k,w_i]$. Then $w_1\neq w_2$, $V(Q_i)\cap V(H)=\{w_i\}$, and $V(Q_1)\cap V(Q_2)=\{v_k\}$. Now we consider the following two subcases:

In the first case, $v_0\in \{w_1,w_2\}$. Then relabelling if necessary, we can assume $w_1=v_0$.
Then $P=Q_1\cup Q_2$ is a path between $v_0$ and $w_2$ with $v_k\in V(P)$ and $V(P)\cap V(H)=\{v_0,w_2\}$, as required.

In the second case, $v_0\not \in \{w_1,w_2\}$. 
Now consider the set 
$V'=V(R)\cap (V(Q_1)\cup V(Q_2))$, which is nonempty as it contains $v_k$. 
Let $j$ be the smallest index in $\{1,\dots,k\}$ such that $v_j\in V'$.
Swapping the indices if necessary, we may assume $v_j\in V(Q_1)$.
Now consider the path $P=R[v_0,v_j]\cup Q_1[v_j,v_k]\cup Q_2[v_k,w_2]$ (using the convention that $Q_1[v_k,v_k]$ contains no edge).
Then 
$P$ is the desired path between $v_0$ and $w_2$ with $v_k\in V(P)$. Indeed, we have $V(P)\cap V(H)=\{v_0,w_2\}$ in view of $V(R)\cap V(H)=\{v_0\}$ and $V(Q_i)\cap V(H)=\{w_i\}$.
\end{proof}

We now prove the forbidden vertex configuration characterisation of terminal planar galled networks.

\begin{theorem}\label{noter2gall}
A galled network $N$ is terminal planar if and only if it contains no primitive tree vertex.
\end{theorem}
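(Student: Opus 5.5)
We reduce both directions to Theorem~\ref{thm:normal-Kk23}: $N$ is terminal planar if and only if $\overline{N}$ contains no normal pivotal subdivision of $K_{2:3}$. So it suffices to show that $\overline{N}$ contains a normal pivotal subdivision of $K_{2:3}$ if and only if $N$ has a primitive tree vertex.

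\emph{Primitive vertex $\Rightarrow$ not terminal planar.} Let $v$ be primitive, internal on the intersecting path $Q=G_1\cap G_2$ with endpoints $t_1,t_2$, and let $r_1,r_2$ be the reticulation vertices of $G_1,G_2$. As in the first half of the proof of Theorem~\ref{thm:outerplanar-forbidden}, $H:=G_1\cup G_2$ is a normal subdivision of $K_{2,3}$. Its degree-3 branch vertices are $t_1,t_2$, its degree-2 branch vertices are $r_1,r_2,v$, and $Q$ is the tree path. For the pivotal basis we need vertex-disjoint pivotal rays from $r_1,r_2,v$. The rays at $r_1,r_2$ are degenerate, since both are cut vertices by Lemma~\ref{lmm:retcutvertex}. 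By Lemma~\ref{lmm:overlap-path-property}(ii), $v$ is cut-visible from $H$ via a ray $R$ meeting $H$ only in $v$. Hence $R$ avoids $r_1,r_2$, the three rays are pairwise disjoint, and $\{r_1,r_2,v\}$ is a pivotal basis. Theorem~\ref{thm:normal-Kk23} then gives that $N$ is not terminal planar.

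\emph{Not terminal planar $\Rightarrow$ primitive vertex.} Take a normal pivotal subdivision $H$ of $K_{2:3}$ with tree path $P_1$, other canonical paths $P_2,P_3$, and pivotal basis $\{b_1,b_2,b_3\}$ with $b_i\in P_i$ internal. Since $P_1$ is a tree path, $b_1$ is not a reticulation vertex. By Theorem~\ref{cycleandgalledcycle} applied to $P_1\cup P_2$ and $P_1\cup P_3$, there are galled cycles $G_2,G_3$ through $b_1$ whose reticulation vertices lie on $P_2$ and $P_3$ respectively, so $G_2\neq G_3$.

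We now mimic the $(\Rightarrow)$ argument of Theorem~\ref{thm:outerplanar-forbidden} at $v:=b_1$. Let $e_3$ be the edge at $v$ not on $P_1$.
\begin{itemize}
\item If $e_3$ is a cut edge, then $G_2,G_3$ both contain the two $P_1$-edges at $v$, so $v$ is primitive.
\item Otherwise, $v$ is not a cut vertex. Its pivotal ray is then nondegenerate, so by Lemma~\ref{lmm:cutvis-block} there is a path $P$ from $v$ to some $u\in V(H)\setminus\{v\}$, internally disjoint from $H$. This places us in cases (a)/(b) of that proof. They produce a third galled cycle through $v$, and Lemma~\ref{lmm:three-galled-cycles} yields that $v$ is primitive or separable.
\end{itemize}

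The main obstacle is excluding the separable outcome. Here we exploit that $b_2,b_3$ carry rays disjoint from $v$'s ray, which outerplanarity alone does not give. If $v$ is separable, the three galled cycles meet pairwise in single edges at $v$ (otherwise some vertex is already primitive and we are done). Then $G_1\cup G_2\cup G_3$ forms the $K_4$-like configuration. The plan is to show that the pivotal rays from $b_2,b_3$, together with the third cycle, force one of these intersecting tree paths to extend, making some neighbour of $v$ or some endpoint $w_i$ primitive. First we would try choosing $P$ via Lemma~\ref{lmm:cutvis-block} so that it passes through the ray's cut vertex, which must be a reticulation vertex or a tree cut vertex. In the reticulation case, the galled cycle obtained through $v$ would share its $P$-edge with $G_2$ or $G_3$ only along a longer tree path, giving a primitive vertex. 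If this fails, fall back to redoing the reduction of Theorem~\ref{thm:normal-Kk23} to choose $b_1$ as a vertex of $P_1$ adjacent to a reticulation-free detour, which forces $e_3$ to be a cut edge.
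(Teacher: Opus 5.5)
\textbf{Verdict: there is a genuine gap in the converse direction.} Your first direction is correct and is essentially the paper's argument. Your cut-edge case in the converse is the paper's Case~1.

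\textbf{Where the argument stops.} The gap is the remaining case, where $b_1$ is not a cut vertex. There, cases (a)/(b) with Lemma~\ref{lmm:three-galled-cycles} only give ``primitive or separable''. After your correct reduction to three galled cycles meeting pairwise in single edges at $b_1$, you offer only strategies you ``would try'' and a fallback that is not carried out.

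\textbf{The residual case occurs, and your stated target fails in it.} You aim to make a neighbour of $v$ or an endpoint $w_i$ primitive. Consider the galled network with $\rho\to q$, $q\to s,m$, $s\to v_1,r_3$, $v_1\to v,a$, $a\to a',\ell_a$, $a'\to r_2,r'$, $v\to v_2,v_3$, $v_2\to r_3,r_1$, $v_3\to r_2,r_1$, $m\to r',\ell_m$. Here $x\to y,z$ lists the children of $x$, and each reticulation has a leaf child.
\begin{itemize}
\item Its galled cycles $G_{r_1},G_{r_2},G_{r_3},G_{r'}$ have tops $v,v_1,s,q$.
\item The paths $(v_2,v,v_3)$, $(v_2,r_1,v_3)$ and $(v_2,r_3,s,q,m,r',a',r_2,v_3)$ form a normal pivotal subdivision $H$ of $K_{2:3}$, with basis $\{v,r_1,r_3\}$ and ray $(v,v_1,a)$.
\item $v$ is separable with single-edge pairwise intersections, and none of $v_1,v_2,v_3$ is primitive.
\item The only primitive vertex is $a$. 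It is the terminal cut vertex of $v$'s ray and lies off $H$, on $G_{r_2}\cap G_{r'}=(v_1,a,a')$.
\end{itemize}
Your sketch treats only rays ending at a reticulation, and there it relies on an unjustified claim. It never treats a ray ending at a tree cut vertex, which is what happens here.

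\textbf{The rays at $b_2,b_3$ are not the right resource.} They carry no information. $P_2$ and $P_3$ always contain reticulation vertices, and these can serve as degenerate rays. So, for fixed $b_1$, pivotality just says that $b_1$ is cut-visible from $H$.

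\textbf{The missing idea is to stop classifying $b_1$ and follow its ray instead.} This is what the paper does. Let $t_1,t_2$ be the degree-3 branch vertices. Let $\mathcal{P}$ be the set of paths that leave $b_1$ through $e_3$, end in $V(H)\setminus\{b_1\}$, and are internally disjoint from $H$; their far ends are tree vertices. There are three cases.
\begin{itemize}
\item \emph{Every path in $\mathcal{P}$ contains a reticulation vertex.} Then neither galled cycle through $b_1$ coming from $P_1\cup P_2$ or $P_1\cup P_3$ can use $e_3$. Otherwise its subpath from $b_1$ through $e_3$ back to $H$ would lie in $\mathcal{P}$ and so carry a second reticulation. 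Hence $b_1$ is primitive.
\item \emph{Every path in $\mathcal{P}$ is a tree path.} Lemma~\ref{lmm:cutvis-block} gives $P\in\mathcal{P}$ through the end $w$ of $b_1$'s ray. So $w$ is a tree vertex whose third edge is a cut edge. The far end $u$ of $P$ is not on $P_1$, by Lemma~\ref{lmm:cycle}; say $u\in V(P_2)$. Apply Theorem~\ref{cycleandgalledcycle} to the cycles $P\cup P_1[t_1,b_1]\cup P_2[t_1,u]$ and $P\cup P_1[t_1,b_1]\cup P_3\cup P_2[u,t_2]$. This yields two distinct galled cycles through $w$. Both must use the two $P$-edges at $w$, so $w$ is primitive.
\item \emph{Mixed case.} Take a path of $\mathcal{P}$ containing a reticulation vertex. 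On it, choose the last vertex before its first reticulation that lies on some tree path of $\mathcal{P}$. Rerun the first case at that vertex.
\end{itemize}
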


\begin{proof}
($\Rightarrow$) Suppose that $v$ is a primitive tree vertex of ${N}$, and let $G_1, G_2$ be two galled cycles such that $v$ is an internal vertex on their intersecting path $P$ with endpoints $t_1, t_2$. Let $r_1, r_2$ be the reticulations of $G_1, G_2$. Then $G_1 \cup G_2$ is a subdivision of $K_{2,3}$ in $\overline{N}$ with degree~$3$ branch vertices $t_1, t_2$, and degree~$2$ branch vertices $v, r_1, r_2$. By Lemma~\ref{lmm:overlap-path-property}(ii) and Lemma~\ref{lmm:cut-visible-basic}(iv), $\{v,r_1,r_2\}$ is a pivotal basis of $G_1\cup G_2$ in $\overline{N}$. Hence $G_1\cup G_2$ is a pivotal subdivision of $K_{2:3}$ in $\overline{N}$, and $N$ is not terminal planar by Lemma~\ref{lem:no_s2}.
\smallskip

($\Leftarrow$) Suppose $N$ is not terminal planar. By Theorem~\ref{thm:normal-Kk23}, $\overline{N}$ contains a normal pivotal subdivision $H$ of $K_{2:3}$ with branch vertices $t_1, t_2$ and three internally disjoint paths $P_1, P_2, P_3$. Without loss of generality, assume that $P_3$ is the unique tree path within $H$. Let $v$ be the branch vertex of $H$ corresponding to the vertex of $B$ that lies on $P_3$, and let $e=\{v,v'\}$ be the edge of $\overline{N}$ incident to $v$ such that $e \notin E(H)$. We show that ${N}$ contains a primitive tree vertex.

\smallskip
\noindent\textit{Case 1: $v$ is a cut vertex of $\overline{N}$.}

Applying Theorem~\ref{cycleandgalledcycle} to the cycles $P_1\cup P_3$ and $P_2\cup P_3$, we obtain reticulation vertices $r_1\in V(P_1)$ and $r_2\in V(P_2)$ whose galled cycles $G_1$ and $G_2$ both contain $v$. Since the cut edge $e$ is not in any cycle, $G_1$ and $G_2$ both contain the two incident edges of $v$ on $P_3$, so $v$ is an internal vertex on the intersecting path of $G_1$ and $G_2$, and hence $v$ is a primitive tree vertex.

\smallskip
\noindent\textit{Case 2: $v$ is not a cut vertex of $\overline{N}$.}

Note that $\overline{N}-v$ is connected. Let $\mathcal{P}$ be the set of paths in $\overline{N}$ between $v$ and some vertex of $V(H)\setminus\{v\}$ whose internal vertices do not lie in $H$. By Lemma~\ref{lmm:cutvis-block}, $\mathcal{P}$ is non-empty. Moreover, every path in $\mathcal{P}$ contains $e$, and both endpoints of every path in $\mathcal{P}$ are tree vertices by Lemma~\ref{lmm:deg3-tree}. We consider the following three cases.

\smallskip
\noindent\emph{Case 2(i): every path in $\mathcal{P}$ contains a reticulation vertex.}

By Theorem~\ref{cycleandgalledcycle} as in Case~1, there exist galled cycles $G_1$ and $G_2$, both containing $v$, with reticulation vertices on $P_1$ and $P_2$, respectively. If $e \in E(G_1)$, then $G_1$ contains a subpath $P$ in $\mathcal{P}$, which by assumption contains an internal reticulation vertex on $P$ and is distinct from $r_1$, contradicting that $G_1$ is a galled cycle. Similarly, $e\notin E(G_2)$. Hence both $G_1$ and $G_2$ contain the two incident edges of $v$ on $P_3$, and $v$ is a primitive tree vertex.

\smallskip
\noindent\emph{Case 2(ii): every path in $\mathcal{P}$ is a tree path.}

Let $R=(v_0,v_1,\dots,v_k)$ be the non-degenerate pivotal ray of $H$ in $\overline{N}$ with $v_0=v$ and $v_k=w$. By Lemma~\ref{lmm:cutvis-block}, there exists a path $P\in\mathcal{P}$ such that $w\in V(P)$. Note that $P$ is a tree path by assumption, hence $w$ is a tree vertex.  Let $u$ be the endpoint of $P$ in $V(H)\setminus\{v\}$.  If $u\in V(P_3)$, then $P \cup P_3[v,u]$ is a cycle consisting entirely of tree vertices, contradicting Lemma~\ref{lmm:cycle}. Without loss of generality, we may assume that $u\in V(P_1)$.

Consider the two cycles
$C_1=P\cup P_3[t_1,v]\cup P_1[t_1,u]$
and
$C_2=P\cup P_3[t_1,v]\cup P_2\cup P_1[u,t_2]$.
Both cycles contain $w$. Since $P$ and $P_3$ are tree paths, Lemma~\ref{lmm:cycle} and Theorem~\ref{cycleandgalledcycle} yield two galled cycles $G_1$ and $G_2$ containing $w$, whose associated reticulation vertices lie on $P_1[t_1,u]$ of $C_1$ and $P_1[u,t_2]\cup P_2$ of $C_2$, respectively.

Since $w$ is a cut vertex while $P \cup H$ is 2-connected, the edge incident with $w$ that does not belong to $P$ is a cut edge of $\overline{N}$, and hence cannot lie in either $G_1$ or $G_2$. Therefore $G_1$ and $G_2$ both contain the two incident edges of $w$ on $P$. Hence $w$ is an internal vertex on the intersecting path of $G_1$ and $G_2$, and so $w$ is a primitive tree vertex.

\smallskip
\noindent\emph{Case 2(iii): $\mathcal{P}$ contains both kinds of paths.}

Take $P^* \in \mathcal{P}$ containing a reticulation vertex. Let $r^*$ be the closest reticulation vertex to $v$ on $P^*$, and write
$P^*[v,r^*]=(w_0,\dots,w_k)$ with $w_0=v$ and $w_k=r^*$. Let $i$ be the largest index with $i<k$ such that $w_i$ lies on some tree path $P\in\mathcal{P}$. Note that $i\ne 0$; otherwise, $v'=w_1=r^*$ would be a reticulation vertex contained in every path in $\mathcal{P}$. Let $u$ be the endpoint other than $v$  of the tree path $P$. If $u\in V(P_3)$, then $P \cup P_3[u,v]$ is a cycle consisting entirely of tree vertices, contradicting Lemma~\ref{lmm:cycle}. Hence, without loss of generality, we may assume that $u\in V(P_1)$.

Consider the two cycles
$C_1=P_3[t_1,v]\cup P\cup P_1[t_1,u]$
and
$C_2=P_3[t_1,v]\cup P\cup P_1[u,t_2]\cup P_2$.
As in Case~2(ii), these cycles yield two galled cycles $G_1$ and $G_2$ containing $w_i$, whose associated reticulation vertices lie on $P_1[t_1,u]$ of $C_1$ and $P_1[u,t_2]\cup P_2$ of $C_2$, respectively. Note that $P_3[t_1,v]\cup P$ is a tree path with the internal tree vertex $w_i$. Set $H'=C_1\cup C_2$ and $e'=\{w_i,w_{i+1}\}$. By the maximality of $i$, every path from $w_i$ to $H'$ containing $e'$ contains a reticulation vertex. Hence the argument of Case~2(i) applies to $H'$ and $e'$, and $G_1$ and $G_2$ both contain the two incident edges of $w_i$ on $P$. Hence $w_i$ is a primitive tree vertex.
\end{proof}

Note that Theorem~\ref{noter2gall} implies that a planar galled network $N$ is terminal planar if and only if any two distinct galled cycles in $N$ are either disjoint or share exactly a single tree edge. Furthermore, this theorem enables us to obtain an additional characterisation  via forbidden directed subgraphs; the proof is similar to that of Theorem~\ref{outer forbidden} for outerplanar galled networks, and hence some details are omitted.

\begin{theorem}\label{terminal forbidden}
A galled network $N$ is terminal planar if and only if $N$ contains no directed subdivision of $D_1$ or $D_2$ shown in Figure~\ref{fig:mainforbidden} as a subgraph.
\end{theorem}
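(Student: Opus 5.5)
The proof reduces to Theorem~\ref{noter2gall} and reuses the two constructions from the proof of Theorem~\ref{outer forbidden}. By Theorem~\ref{noter2gall}, it suffices to show that $N$ contains a primitive tree vertex if and only if $N$ contains a directed subdivision of $D_1$ or $D_2$ as a subgraph.

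For the forward direction of the theorem (terminal planar implies no $D_1$, $D_2$), take the contrapositive. Suppose $N$ contains a directed subdivision $D$ of $D_1$ or $D_2$. In $D$, the square vertex is primitive: it is an internal vertex of the intersecting path of two galled cycles of $D$. We must check that these two cycles remain galled cycles in $N$. Each is the union of two directed paths in $D \subseteq N$ from a common top vertex to a common reticulation vertex, and all other vertices on these paths have in-degree $1$ in $D$. We need them to be tree vertices of $N$, not reticulations.

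This is where Lemma~\ref{lmm:retcutvertex} is used. A reticulation of $N$ lying internally on such a directed path would have its outgoing edge on a cycle of $\overline{D}\subseteq\overline N$. That outgoing edge is a cut edge of $\overline N$ and so lies on no cycle, a contradiction. The reticulation vertices of $D$ are genuine reticulations of $N$, since they have in-degree $2$ in $D$. By uniqueness of galled cycles, the two cycles found in $D$ are exactly $G_{r_1}$ and $G_{r_2}$ in $N$. The subdivided square vertex is therefore internal to their intersecting path and is primitive in $N$. By Theorem~\ref{noter2gall}, $N$ is not terminal planar.

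For the converse, suppose $N$ is not terminal planar. By Theorem~\ref{noter2gall}, $N$ has a primitive tree vertex $v$, internal to the intersecting path $P$ of galled cycles $G_1,G_2$ with top vertices $s_1,s_2$. By Lemma~\ref{lmm:twotopcompa}, we may assume $s_1=s_2$ or $s_2<s_1$. Then we repeat Case~(i) of the proof of Theorem~\ref{outer forbidden} verbatim:
\begin{itemize}
\item If $s_1=s_2$, Lemma~\ref{lmm:twotoploca}(i) gives the four internally disjoint directed paths $t_i\to r_j$, and together with a root path and leaf paths from $r_1,r_2$ these yield a subdivision of $D_1$.
\item If $s_2<s_1$, Lemma~\ref{lmm:twotoploca}(ii) gives the configuration of $D_2$.
\end{itemize}

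The main obstacle is verifying that the auxiliary paths are disjoint from $H=G_1\cup G_2$ except at their endpoints. For the leaf paths from $r_i$, disjointness follows because $r_i$ is a cut vertex (Lemma~\ref{lmm:retcutvertex}), so everything below $r_i$ avoids $H$, and the two leaf paths avoid each other. For the root-to-top path, take a directed path from $\rho$ to the highest top vertex $s$. Its last vertex in $H$ must be $s$ itself, because any other vertex of $H$ is a descendant of $s$ and $N$ is acyclic. Hence the path is internally disjoint from $H$.
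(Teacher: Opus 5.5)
Your overall route is the paper's: reduce to Theorem~\ref{noter2gall}, obtain the forward direction from the primitive square vertex of $D_1$/$D_2$, and obtain the converse from Case~(i) of the proof of Theorem~\ref{outer forbidden}. Your extra checks are correct and fill in details the paper leaves implicit. These are that the galled cycles of a subdivision $D$ are galled cycles of $N$ (via Lemma~\ref{lmm:retcutvertex}), and that the root path and the leaf paths below $r_1,r_2$ meet $H$ only at their ends.

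However, one auxiliary path is supplied neither by you nor by the paper's Case~(i). In the subcase $s_2<s_1$, the intersecting path $P$ is directed from $s_2$ to $t_2$. The square vertex $v$ of $D_2$ is an interior vertex of $P$ and a tree vertex of the galled network $D_2$, so its third edge is an \emph{out}-edge leaving $G_1\cup G_2$ towards a pendant leaf. Without a directed path from $v$ to a leaf that avoids everything else, your union of $H$, the root path and the two leaf paths from $r_1,r_2$ is a subdivision of $D_2$ with that pendant leaf deleted, not of $D_2$ itself. For $D_1$ no extra path is needed: its square vertex is the common top $s$, and the root path supplies its third edge.

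The fix is short. Let $v_3$ be the third neighbour of $v$. It is a child of $v$, and $v_3\notin V(G_1)\cup V(G_2)$ by Lemma~\ref{lmm:neighbourprimitive}. Follow out-edges from $v_3$ to a leaf $\ell_3$.
\begin{itemize}
\item This path never enters $H$. Every vertex of $H$ other than $s_1$ has all its in-edges in $H$: a tree vertex of $G_i$ other than its top has its unique parent on $G_i$, $s_2\in V(G_1)\setminus\{s_1\}$ by Lemma~\ref{lmm:twotoploca}, and $r_1,r_2$ have both parents in $H$. Meanwhile $s_1$ is not a descendant of $v$.
\item It misses the leaf path below $r_i$. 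That path lies on the far side of the cut edge at $r_i$, whereas $v_3$ is joined to $H$ by the edge $\{v,v_3\}$. Reaching the far side would therefore force the path through $r_i\in V(H)$.
\item It misses the root path, because all its vertices are strict descendants of $s_1$.
\end{itemize}
With this path added, the $s_2<s_1$ construction does yield a directed subdivision of $D_2$, and the rest of your argument goes through.
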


\begin{proof}
By Theorem~\ref{noter2gall}, it suffices to show the claim that $N$ contains a primitive tree vertex if and only if $N$ contains a directed subdivision of $D_1$ or $D_2$. Indeed, this claim is already implicitly established in the proof of Theorem~\ref{outer forbidden}: the forward direction is straightforward as both $D_1$ and $D_2$ (see Figure~\ref{fig:mainforbidden}) contain primitive tree vertices, and the other direction follows from Case~(i) in the proof of Theorem~\ref{outer forbidden}.
\end{proof}

\section{Conclusion and Open Problems} \label{sec:conclusion}

In this paper, we presented three characterisations of both outerplanar and terminal planar galled networks in terms of forbidden vertex configurations, forbidden directed subgraphs, and forbidden structures in their associated underlying undirected graphs. It would be of interest to explore the algorithmic implications of these characterisations. Furthermore, it would be of interest to investigate whether the first two characterisations can be extended to upward galled networks, which may offer further insights into  a complete characterisation of upward phylogenetic networks, an open problem posed in~\cite{Miyaji2026}. Finally, future work may also consider the associated counting problems and the characterisation of other important subclasses of phylogenetic networks~\cite{liu2025asymptotic}.

\bmhead{Acknowledgements}
TW would like to thank Prof. Andreas Dress for introducing him to combinatorial phylogenetics twenty years ago, an influence that has profoundly shaped his research journey. HL is supported by JSPS KAKENHI Grant Number JP25KJ2168, and he thanks his PhD advisor Dr. Momoko Hayamizu for support and guidance. GRY is supported by NSTC Grant Number NSTC-113-2115-M-110-004-MY3. We all thank Michael Wallner for constructive discussions and comments. 

\begin{appendices}

\section{Cut-labelled graph characterisation}\label{secA1}

We briefly recall the terminology of cut-labeled graphs used in
\cite{Miyaji2026}. A \emph{$0/1$-labeled graph} is a pair $(G,g)$,
where $G$ is a graph and $g:V(G)\cup E(G)\to \{0,1\}$ is a labeling function assigning each vertex and edge a label of 0 or 1.
Given a degree-$2$ vertex $v$ in $G$ with two incident edges $e_i=\{v,v_i\}$ for $i=1,2$ such that $g(e_1)=g(e_2)$, the \emph{label-preserving smoothing} of $v$ produces a $0/1$-labeled graph $(G',g')$ such that $V(G'):=V(G)\setminus \{v\}$ and $E(G'):=(E(G)\cup \{e'\})\setminus \{e_1,e_2\}$, where $e'=\{v_1,v_2\}$. The labeling function $g'$ is defined by $g'(e'):=g(e_1)=g(e_2)$ and $g'(x):=g(x)$ for every other $x\in V(G')\cup E(G')$.

Given another $0/1$-labeled graph $(F,f)$, we say $(F,f)$ is a \emph{label-preserving subgraph} of $(G,g)$ if $F$ is a subgraph of $G$ and $f(x)=g(x)$ for each $x\in V(F)\cup E(F)$. On the other hand, a graph isomorphism is \emph{label-preserving} if it preserves
both the graph structure and all vertex and edge labels.

For a graph $G$, its \emph{cut-labeled graph} is the
$0/1$-labeled graph $L(G)=(G,\ell)$, where $\ell(x)=1$ for $x \in V(G)\cup E(G)$ if and only if $x$ is a cut vertex or cut edge of $G$. 
A characterisation of terminal planar phylogenetic networks using forbidden cut-labeled subgraphs was given in \cite{Miyaji2026}. 
Let $\mathcal H$ denote the family of eight $0/1$-labeled graphs shown in Figure~\ref{fig:binary_H_structures}. 
Slightly rephrased, for $i=1,2,3$,
a $0/1$-labeled graph $(G,g)$ is said to \emph{contain an $\mathcal H$ structure} if there exists a $0/1$-labeled graph $(G^*,g^*)$ obtained from a label-preserving subgraph $(G',g')$ of $(G,g)$ by a finite sequence of label-preserving vertex smoothings, such that $(G^*,g^*)$ is label-preserving isomorphic to some graph in $\mathcal H$. More specifically, for a fixed $(H,h)\in\mathcal H$, we say that $(G,g)$ \emph{contains an $(H,h)$ structure} if $(G^*,g^*)$ above can be label-preserving isomorphic to $(H,h)$.

For later use, we introduce a refined family $\mathcal H^*$ of such $\mathcal H$ structures, in which the labeling function is further restricted.

That is, we set 
\[\mathcal H^*=
\{(H_{1},h^*),\quad (H_{2,0},h^*),\ldots,(H_{2,3},h^*),\quad
(H_{3,0},h^*),(H_{3,1},h^*),(H_{3,2},h^*)\}.\] This family contains precisely eight $0/1$-labeled graphs.

\begin{figure}[htbp]
    \centering
    \includegraphics[width=0.9\textwidth]{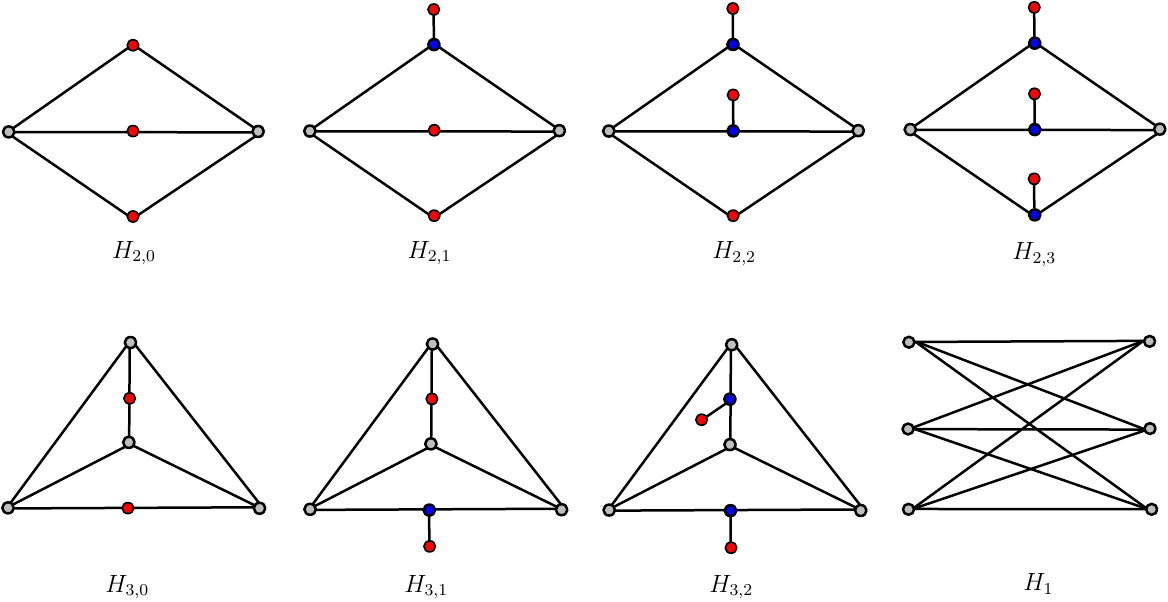} 
    \caption{Eight forbidden structures for binary phylogenetic networks. 
    The set of $\mathcal H$ contains $0/1$-labeled graphs $(H,h)$ such that $H$ is one of the eight graphs depicted here, and $h$ maps all edges and blue vertices to $0$, red vertices to $1$, while there are no constraints on gray vertices (i.e., they can have label $0$ or $1$). 
    Note that $\mathcal H^*$ is a subset of $\mathcal H$ consisting of graphs $(H,h^*)$ in which $h^*$ maps gray vertices to $0$. }
    \label{fig:binary_H_structures}
\end{figure}

\begin{lemma}\label{lem:H2_degree3_label0}
Let $N$ be a binary phylogenetic network. 
Suppose that $L(\overline{N})$ contains an $(H,h)$ structure for some $(H,h)\in\mathcal H$.
Then $h(v)=0$ for every degree-$3$ vertex $v$ of $H$ that is not incident to a cut edge in $H$.
Consequently, $(H,h)$ is contained in $\mathcal H^*$.
\end{lemma}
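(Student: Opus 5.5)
The plan is to exploit the fact that $N$ is binary, so every vertex of $\overline{N}$ has degree $1$ or $3$. Fix an $(H,h)$ structure in $L(\overline{N})$. It is witnessed by a label-preserving subgraph $(G',g')$ of $L(\overline{N})$ that becomes label-preserving isomorphic to $(H,h)$ after finitely many label-preserving smoothings. Smoothing only removes degree-$2$ vertices and never changes the degree of a surviving vertex. Hence every degree-$3$ vertex $v$ of $H$ corresponds to a vertex $x$ of $\overline{N}$ with $\deg_{G'}(x)=3$. Since $\deg_{\overline{N}}(x)\le 3$, all three edges of $\overline{N}$ at $x$ already lie in $G'$.

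Next, translate the hypothesis that $v$ is not incident to a cut edge of $H$. Each edge of $H$ at $v$ corresponds to a canonical path in $G'$ starting at $x$. I would argue that each such path lies on a cycle of $G'$. To see this, note that $v$ lies on a cycle of $H$ through any given incident edge, because that edge is not a cut edge of $H$; lifting the cycle gives a cycle of $G'\subseteq\overline{N}$. In particular, each of the three edges of $\overline{N}$ at $x$ lies on a cycle of $\overline{N}$, so none of them is a cut edge of $\overline{N}$.

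The key step is then to conclude that $x$ is not a cut vertex of $\overline{N}$. Suppose it were. Then $\overline{N}-x$ has at least two components, and some component contains exactly one neighbour $y$ of $x$, since $x$ has only three neighbours and the components partition them into at least two nonempty classes. Every path from $y$ back to $x$ avoiding the edge $\{x,y\}$ must leave this component through $x$ itself, and it can only do so via $y$. Hence $\{x,y\}$ lies on no cycle, i.e.\ it is a cut edge, contradicting the previous step. Therefore $\ell(x)=0$, and since labels are preserved, $h(v)=0$.

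For the final sentence of the statement, I would check against Figure~\ref{fig:binary_H_structures} that the gray vertices of each graph in $\mathcal H$ are exactly degree-$3$ vertices not incident to a cut edge in $H$. Then $h$ agrees with $h^*$ on all gray vertices, so $(H,h)\in\mathcal H^*$. I expect this figure-dependent verification to be the main obstacle, since it relies on inspecting the eight pictured graphs rather than on a general argument. The subtle technical point is making sure that smoothing preserves degree-$3$ vertices and the cycle structure through them, which holds because smoothing only contracts degree-$2$ paths.
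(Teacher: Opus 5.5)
Your proof is correct. It reaches the key conclusion, that the vertex $x$ of $\overline{N}$ corresponding to $v$ is not a cut vertex, by a different route from the paper.

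\textbf{The paper's route.} It inspects the eight graphs to find a second degree-$3$ vertex $u$ not incident to a cut edge, together with three internally disjoint $v$--$u$ paths in $H$. It lifts these paths to $\overline{N}$ and notes that a degree-$3$ cut vertex admits at most two internally disjoint paths to any other vertex, so $\ell(x)=0$.

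\textbf{Your route.} You use the hypothesis directly. Each edge of $H$ at $v$ lies on a cycle of $H$, and lifting shows that none of the three edges of $\overline{N}$ at $x$ is a cut edge. Your pigeonhole argument on the components of $\overline{N}-x$ then shows that a degree-$3$ vertex with no incident cut edge cannot be a cut vertex. This is exactly where degree $3$ is used, since a degree-$4$ vertex where two cycles touch is a cut vertex with no incident cut edge.

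\textbf{Comparison.} Your argument is self-contained for the first claim and needs no figure inspection there. The paper's argument relies on a stronger, figure-specific connectivity property between pairs of such vertices. In both proofs the figure is needed only for the final sentence. There you only need the inclusion that every gray vertex is a degree-$3$ vertex not incident to a cut edge of $H$, not equality of the two sets.

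\textbf{A side remark.} The caption states that every edge of a graph in $\mathcal H$ has label $0$, and label-preserving smoothing only merges edges of equal label. Hence the three edges of $\overline{N}$ at $x$ are already non-cut edges by their labels. Your pigeonhole step therefore applies even without the cycle-lifting, and in fact to every degree-$3$ vertex of $H$.
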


\begin{proof}
By definition, there exist a label-preserving subgraph $(F,\ell|_F)$ of $L(\overline{N})$ and a $0/1$-labeled graph $(F_1,\ell_1)$ obtained from $(F,\ell|_F)$ by a finite sequence of label-preserving vertex smoothings such that $(F_1,\ell_1)$ is label-preserving isomorphic to $(H,h)$. We identify $(H,h)$ with $(F_1,\ell_1)$ via this isomorphism.

Let $V_0(H)$ be the set of degree-$3$ vertices  of $H$ that are not incident to a cut edge in $H$. Then we have $|V_0(H)|\ge 2$ by checking each graph  in $\mathcal H$. By assumption, we have $v\in V_0(H)$. Now consider another vertex $u \in V_0(H)$. Then there exist three internally disjoint paths in $H$ between $v$ and $u$, which can be checked by inspecting each graph  in $\mathcal H$.  

Let $v'$ and $u'$ be the vertices in $F_1$ corresponding to $v$ and $u$, respectively.  Then $v'$ has degree $3$ in $F_1$, and it suffices to show that $\ell_1(v')=0$. Since $V(F_1)\subseteq V(F)\subseteq V(\overline{N})$, it follows that  $v'$ and $u'$ both are also vertices of $\overline{N}$. Since $v'$ has degree $3$ in $F_1$ and $N$ is binary, we have
$\deg_{\overline{N}}(v')=3.$ It remains to show that $v'$ is not a cut vertex of $\overline{N}$. Indeed, if $v'$ were a cut vertex, then, since $\deg_{\overline{N}}(v')=3$, there are at most two internally disjoint paths between $v'$ and $u'$ in $\overline{N}$. Since $F$ is a subgraph of $\overline{N}$, there are also at most two internally disjoint paths between $v'$ and $u'$ in $F$. This contradicts the three internally disjoint paths between $v'$ and $u'$ in $F$ corresponding to the three internally disjoint paths between $v$ and $u$ in $H$. Hence $\ell_1(v')=h(v)=0$.
\end{proof}

Now we present a slightly stronger statement of \cite[Corollary~5.16]{Miyaji2026}. 

\begin{theorem}
\label{thm:cut-label:str}
Let $N$ be a binary phylogenetic network. Then $N$ is terminal planar if and only if the cut-labeled graph $L(\overline{N})$ contains no $\mathcal H^*$ structure.
\end{theorem}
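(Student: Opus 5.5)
The plan is to derive Theorem~\ref{thm:cut-label:str} directly from \cite[Corollary~5.16]{Miyaji2026}. That result says a binary phylogenetic network $N$ is terminal planar if and only if $L(\overline{N})$ contains no $\mathcal H$ structure. Since $\mathcal H^*\subseteq\mathcal H$, the whole argument reduces to showing that, for cut-labeled graphs of binary networks, containing an $\mathcal H$ structure is equivalent to containing an $\mathcal H^*$ structure. Lemma~\ref{lem:H2_degree3_label0} is what collapses the gray-vertex freedom in this equivalence.

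\emph{Forward direction (terminal planar implies no $\mathcal H^*$ structure).} Suppose $N$ is terminal planar. By \cite[Corollary~5.16]{Miyaji2026}, $L(\overline{N})$ contains no $\mathcal H$ structure. Every $(H,h^*)\in\mathcal H^*$ is itself a member of $\mathcal H$: it arises by fixing each unconstrained gray label to $0$, which is one of the allowed choices. Hence $L(\overline{N})$ contains no $\mathcal H^*$ structure either.

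\emph{Converse (no $\mathcal H^*$ structure implies terminal planar).} Suppose $N$ is not terminal planar. Then $L(\overline{N})$ contains an $(H,h)$ structure for some $(H,h)\in\mathcal H$. Lemma~\ref{lem:H2_degree3_label0} forces $h(v)=0$ at every degree-$3$ vertex of $H$ not incident to a cut edge of $H$, so $(H,h)\in\mathcal H^*$. The same witnessing subgraph and smoothing sequence are then an $\mathcal H^*$ structure, and we are done.

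The main obstacle is the bookkeeping step that identifies the gray vertices of Figure~\ref{fig:binary_H_structures} with the vertices covered by Lemma~\ref{lem:H2_degree3_label0}. I would check graph by graph that in each of the eight shapes every gray vertex has degree $3$ and is not incident to a cut edge of $H$. A further point is that label-preserving smoothing never changes the label of a surviving branch vertex; this holds because smoothing deletes only degree-$2$ vertices and leaves the labels of all other vertices untouched. If some gray vertex were instead incident to a cut edge, the fallback is a degree argument. In a binary network that vertex has degree $3$ with a pendant cut edge and two edges on a cycle, and one would argue from this configuration rather than from Lemma~\ref{lem:H2_degree3_label0}; I expect the figure's design to make this fallback unnecessary.
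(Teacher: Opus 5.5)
Your proposal is correct and follows the paper's route. The paper gives no separate proof: the theorem is obtained by combining \cite[Corollary~5.16]{Miyaji2026} with Lemma~\ref{lem:H2_degree3_label0}, whose conclusion that $(H,h)\in\mathcal H^*$ is exactly your converse step, while $\mathcal H^*\subseteq\mathcal H$ gives the forward direction, and the paper implicitly relies on the same identification of gray vertices that you flag as a check.
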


This characterisation leads to the proof
Theorem~\ref{thm:terminal_forbid-te}, which claims that a phylogenetic network $N$ is terminal planar if and only if  $\overline{N}$ contains no subdivision of $K_{3,3}$, no pivotal subdivision of $K_{2:3}$, and no pivotal subdivision of ${K}_{4:2}$.

\begin{proof}[Proof of Theorem~\ref{thm:terminal_forbid-te}]
We first prove the claim that $\overline{N}$ contains a pivotal subdivision of $K_{2:3}$ if and only if  $L(\overline{N})$ contains an $(H_{2,i},h^*)$ structure for some $0\le i \le 3$

First, assume that $\overline{N}$ contains a subgraph $G$  such that $G$ is a pivotal subdivision of $K_{2:3}$. 
In other words, $G$ contains precisely five branching vertices, which can be grouped into two subsets: the first one is $V_0$ that contains three vertices $\{v_1,v_2,v_3\}$ that form the pivotal basis of $G$ in $\overline{N}$, and the other is $V_1$ that contains the other two vertices $t_1,t_2$ which are necessarily non-cut vertices of degree $3$.  
Let $i$ be the number of vertices in $V_0$ that are not cut vertices of $\overline{N}$. Without loss of generality, assume $i=2$, as the other cases can be proved similarly.
Relabelling if necessary, we may assume that $v_3$ is a cut vertex. Then  there is a pair of disjoint pivotal rays $P_1,P_2$. We can further assume that $P_i$ contains exactly one cut vertex $u_i$, which is necessarily one end of $P_i$; in particular, $P_i$ contains no cut edge.

Now consider the subgraph $F$ of $\overline{N}$ obtained as the union of $G$, $P_1$, and $P_2$.
Now consider the cut-labeled graph $L(\overline{N})=(\overline{N},\ell)$, which contains a $0/1$-labeled subgraph $(F,f)$, where $f=\ell|_F$.
Since each edge $e$ in $G$ is not a cut edge in $\overline{N}$, and neither $P_1$ nor $P_2$ contains a cut edge, it follows that $f(e)=0$ for every edge $e$ in $F$.
Let $V'=V_0\cup V_1 \cup \{u_1,u_2\}$. Then for $v\in V'$, $f(v)=1$ if and only if $v\in \{v_3,u_1,u_2\}$. 
This implies that $L(\overline{N})$ contains a $(H_{2,2},h^*)$ structure, as required.

Conversely, without loss of generality, assume that $L(\overline{N})$ contains an $(H_{2,2},h^*)$ structure. By definition, there exist a subgraph $(F,f)$ of $L(\overline{N})$ and a $0/1$-labeled graph $(F_1,f_1)$ obtained from $(F,f)$ by a finite sequence of label-preserving vertex smoothings such that $(F_1,f_1)$ is label-preserving isomorphic to $(H_{2,2},h^*)$. 
The vertices of $F_1$ are precisely the vertices of $F$ that remain after the smoothings, and they inherit their labels from $F$. Hence each edge of $H_{2,2}$ corresponds to a path in $F$. Now ignore the two paths in $F$ corresponding to the two cut edges of $H_{2,2}$. The remaining subgraph is a subdivision of $K_{2,3}$. Let $v_1,v_2,v_3$ be the three branching vertices in this remaining subgraph that correspond to the degree-$2$ vertices of $K_{2,3}$, and let $t_1,t_2$ be the other two degree-$3$ branching vertices. By the definition of $h^*$, we have $h^*(t_1)=h^*(t_2)=0$, and hence $f(t_1)=f(t_2)=0$.

Relabelling if necessary, we may assume that $h^*(v_3)=f(v_3)=1$ and that $h^*(v_k)=f(v_k)=0$ for $k=1,2$. Then $v_3$ is a cut vertex of $\overline{N}$. Moreover, for $k=1,2$, the cut edge incident with $v_k$ in $H_{2,2}$ corresponds to a path $P_k$ in $F$ between $v_k$ and a label-$1$ vertex $u_k$.
Since $f(v_k)=0$ and $f(u_k)=1$, the vertex $v_k$ is not a cut vertex of $\overline{N}$, whereas $u_k$ is a cut vertex of $\overline{N}$.
For $k=1,2$, let $Q_k$ be the subpath of $P_k$ from $v_k$ to the first cut vertex encountered along $P_k$. Since $Q_k$ is a subpath of $P_k$ for $k=1,2$, and the paths $P_1$ and $P_2$ corresponding to the distinct cut edges of $H_{2,2}$ are vertex-disjoint and meet the remaining $K_{2,3}$ subdivision only at $v_1$ and $v_2$, respectively, the paths $Q_1$ and $Q_2$ are also vertex-disjoint and meet the subdivision only at their initial vertices.

Thus $v_3$ itself gives a degenerate pivotal ray. 
Moreover, by the choice of $Q_k$, each $Q_k$ contains exactly one cut vertex as its endpoint and meets the remaining $K_{2,3}$ subdivision only at $v_k$.
Hence $Q_1$ and $Q_2$ are pivotal rays from $v_1$ and $v_2$, respectively. Therefore $\{v_1,v_2,v_3\}$ forms a pivotal basis of the above subdivision of $K_{2,3}$ in $\overline{N}$. Hence $\overline{N}$ contains a pivotal subdivision of $K_{2:3}$.

Similarly, $\overline N$ contains a pivotal subdivision of $K_{4:2}$ if and only if $L(\overline N)$ contains an $(H_{3,i},h^*)$ structure for some $0\le i\le 2$. Moreover, an $(H_1,h^*)$ structure is exactly a subdivision of $K_{3,3}$.
Together with Theorem~\ref{thm:cut-label:str}, these equivalences establish the result of the theorem. 
\end{proof}

We end this Appendix with the following observation, a slightly stronger version of Lemma~\ref{lem:H2_degree3_label0}.

\begin{lemma}\label{lem:galled_H20_H21}
Let $N$ be a galled network. Then $N$ is terminal planar if and only if $L(\overline N)$ contains neither an $(H_{2,0},h^*)$ structure nor an $(H_{2,1},h^*)$ structure.
\end{lemma}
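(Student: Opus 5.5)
We need to prove that a galled network $N$ is terminal planar if and only if $L(\overline N)$ contains neither an $(H_{2,0},h^*)$ structure nor an $(H_{2,1},h^*)$ structure. From the proof of Theorem~\ref{thm:terminal_forbid-te}, an $(H_{2,i},h^*)$ structure corresponds to a pivotal subdivision of $K_{2:3}$ in which exactly $i$ of the three basis vertices are cut vertices of $\overline N$ and the remaining $3-i$ are joined by disjoint non-degenerate pivotal rays.

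\emph{Easy direction.} If $L(\overline N)$ contains an $(H_{2,0},h^*)$ or $(H_{2,1},h^*)$ structure, then that correspondence gives a pivotal subdivision of $K_{2:3}$ in $\overline N$. Hence $N$ is not terminal planar by Lemma~\ref{lem:no_s2}.

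\emph{Converse.} Suppose $N$ is not terminal planar. By Theorem~\ref{noter2gall}, $N$ has a primitive tree vertex $v$, internal to the intersecting path $P$ of two galled cycles $G_1,G_2$. As in the proof of Theorem~\ref{noter2gall}, $H:=G_1\cup G_2$ is a subdivision of $K_{2,3}$ with degree-$3$ branch vertices $t_1,t_2$ and degree-$2$ branch vertices $v,r_1,r_2$. The issue is that $r_1,r_2$ are cut vertices (label $1$), so the basis $\{v,r_1,r_2\}$ has at least two cut vertices. That yields only $H_{2,2}$ or $H_{2,3}$, so the degree-$2$ branch vertices must be re-chosen to avoid cut vertices.

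\emph{Re-choosing the basis.} On the non-shared path of $G_i$ (the one avoiding $P$), take the parent $p_i$ of $r_i$ lying on it; such a parent exists unless the non-shared path is very short. Since $p_i$ is a tree vertex on a cycle, it is not a cut vertex unless its third edge is a cut edge. In either case I claim $\{v,p_1,p_2\}$, or a suitable mixture, is a pivotal basis with at most one cut vertex.
\begin{itemize}
\item For $p_i$, the ray is the edge $\{p_i,r_i\}$, ending at the cut vertex $r_i$ and meeting $H$ only in $p_i$. To make this legitimate I would remove $r_i$ from $H$: reroute by taking the subdivision branch vertex at $p_i$ and regarding $r_i$ as lying on the ray rather than on $H$. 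This fails, since $r_i$ lies on the cycle $G_i$.
\item Instead, use the third edge of $p_i$ when $p_i$ is not adjacent to $r_i$ on the non-shared path. Better: take the degree-$2$ branch vertex to be a tree vertex $x_i$ on the non-shared path whose third edge leads, avoiding $H$, to a cut vertex. Any tree vertex $x_i\ne t_j$ there works. Its third neighbour is outside $H$ by an argument like Lemma~\ref{lmm:neighbourprimitive}, and any path from $x_i$ back to $H$ through that neighbour must contain a reticulation vertex, by the argument of Lemma~\ref{lmm:overlap-path-property}(i). So either $x_i$ is itself a cut vertex or $x_i$ has a non-degenerate pivotal ray whose first cut vertex occurs before returning to $H$.
\item For $v$, Lemma~\ref{lmm:overlap-path-property}(ii) gives a ray.
\end{itemize}

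\emph{Main obstacle.} The hardest step is guaranteeing the existence of such tree vertices $x_i$ on the non-shared paths, and ensuring that at most one chosen basis vertex is a cut vertex while the rays remain pairwise disjoint.

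\begin{itemize}
\item \emph{Existence.} If the non-shared path of $G_i$ is just $t_1 r_i t_2$, then $r_i$ is forced; this must be handled, perhaps via the observation that then $t_1,t_2$ are the parents of $r_i$. In that case I would try to replace $H$ by another $K_{2,3}$ subdivision, using $P$ and an alternative route.
\item \emph{Cut vertices.} If both $x_1$ and $x_2$ are cut vertices, I would move $x_i$ along its path to a non-cut vertex. A cut tree vertex on a cycle has a pendant cut edge, and a vertex adjacent to it on the cycle need not be a cut vertex.
\item \emph{Disjointness.} Rays from distinct vertices end at their first cut vertex. Since a cut vertex separates, rays from different sides of $H$ cannot meet before reaching a cut vertex without creating a path that violates Lemma~\ref{lmm:overlap-path-property}(i). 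I expect this disjointness check to be the real work.
\end{itemize}

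Once these checks are done, the resulting pivotal $K_{2:3}$ has $i\in\{0,1\}$ cut-vertex basis vertices, and the correspondence gives an $(H_{2,0},h^*)$ or $(H_{2,1},h^*)$ structure.
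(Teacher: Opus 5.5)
\textbf{There is a genuine gap: you have read the index of $H_{2,i}$ backwards.} In the paper, $i$ counts the degree-$2$ branch vertices with label $0$. These are basis vertices that are \emph{not} cut vertices, and so carry a non-degenerate pivotal ray to a red vertex. The proof of Theorem~\ref{thm:terminal_forbid-te} fixes this: there $i$ is ``the number of vertices in $V_0$ that are not cut vertices of $\overline{N}$'', and $i=2$ produces $(H_{2,2},h^*)$. So $(H_{2,0},h^*)$ has all three basis vertices labelled $1$, and $(H_{2,1},h^*)$ has exactly two.

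Your re-choosing step instead aims at a pivotal $K_{2:3}$ with at most one cut-vertex basis vertex. In the paper's convention that is an $(H_{2,2},h^*)$ or $(H_{2,3},h^*)$ structure, and this target is unreachable in general. Consider the galled network with arcs $\rho\to s$, $s\to t_1$, $s\to t_2$, $t_j\to r_k$ for $j,k\in\{1,2\}$, and $r_k\to\ell_k$ (this is $D_1$ of Figure~\ref{fig:mainforbidden}).
\begin{itemize}
\item It has the primitive vertex $s$, so it is not terminal planar.
\item Its only $2$-connected block is $K_{2,3}$ itself. Hence every subdivision of $K_{2,3}$ in its underlying graph has degree-$2$ branch vertices $s,r_1,r_2$, and all three are cut vertices.
\end{itemize}
The case you flag (non-shared path equal to $t_1r_it_2$) is therefore fatal rather than technical, and there is no alternative $K_{2,3}$ subdivision to switch to. Under your reading, this network would even be a counterexample to the lemma.

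\textbf{With the correct convention, the observation you call ``the issue'' finishes the proof.} Take $H=G_1\cup G_2$ with basis $\{v,r_1,r_2\}$, which is a pivotal basis by Lemma~\ref{lmm:overlap-path-property}(ii) and Lemma~\ref{lmm:cut-visible-basic}(iv). The reticulation vertices $r_1,r_2$ are cut vertices, so they have label $1$. For $v$ there are two cases:
\begin{itemize}
\item $v$ is a cut vertex, which gives $(H_{2,0},h^*)$;
\item $v$ has a non-degenerate pivotal ray meeting $H$ only at $v$, hence disjoint from the degenerate rays at $r_1$ and $r_2$, which gives $(H_{2,1},h^*)$.
\end{itemize}
In both cases the correspondence in the proof of Theorem~\ref{thm:terminal_forbid-te} produces the required structure. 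The labels match because $t_1,t_2$ are not cut vertices, and no edge of $H$ or of the ray is a cut edge.

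This differs only mildly from the paper. The paper starts from a normal pivotal subdivision (Theorem~\ref{thm:normal-Kk23}) and uses reticulation vertices on its two non-tree paths as the label-$1$ branch vertices. Your route through Theorem~\ref{noter2gall} works equally well. Your easy direction is fine as written, since it only uses that either structure yields some pivotal subdivision of $K_{2:3}$.
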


\begin{proof}
By Lemma~\ref{lem:no_s2} and Theorem~\ref{thm:normal-Kk23}, terminal planarity of $N$ is equivalent to the absence of a normal pivotal subdivision of $K_{2:3}$ in $\overline N$. In such a subdivision, at
least two of the three paths contain reticulation vertices, which are cut vertices of $\overline N$ by Lemma~\ref{lmm:retcutvertex} and Lemma~\ref{lmm:cycle}.
Thus, among the three degree-$2$ vertices of the corresponding $H_2$ graph, at least two have label $1$. Hence the only possible $\mathcal H_2^*$ structures are those corresponding to
$(H_{2,0},h^*)$ and $(H_{2,1},h^*)$, and the claim follows from Theorem~\ref{thm:terminal_forbid-te}.
\end{proof}

\end{appendices}

\end{document}